
\documentclass[final,times,11pt]{elsarticle}
\headheight 20mm \oddsidemargin 0mm \evensidemargin 0mm \topmargin
-20mm \textheight 220mm \textwidth 165mm




\usepackage{amssymb}
 \usepackage{amsthm}

\usepackage{graphicx} 
\usepackage{epstopdf}
\usepackage{multirow}
\usepackage{mathtools}
\usepackage{subfigure}

\usepackage[colorlinks]{hyperref}

\allowdisplaybreaks

\usepackage[algo2e,ruled]{algorithm2e}
\usepackage{algorithmic} 
\usepackage{subcaption}
\usepackage{adjustbox}

\newtheorem{remark}{Remark}

\newtheorem{theorem}{Theorem}
\newtheorem{lemma}{Lemma}



\journal{}

\begin{document}

\begin{frontmatter}



\title{Vertex-based auxiliary space multigrid method and its application to linear elasticity equations}

\author[2,3]{Jiayin Li}
\ead{ljy@pku.edu.cn}

\author[2,3]{Jinbiao Wu}
\ead{}

\author[2]{Wenqian Zhang}
\ead{}

\author[3]{Jiawen Liu}
\ead{}


\address[2]{School of Mathematical Science, Peking University, Beijing 100871, China.}
\address[3]{Peking University Chongqing Research Institute of Big Data, Chongqing 400000, China.}


\begin{abstract}
In this paper, a vertex-based auxiliary space multigrid(V-ASMG) method as a preconditioner of the PCG method is proposed for solving the large sparse linear equations derived from the linear elasticity equations. 
The main key of such V-ASMG method lies in an auxiliary region-tree structure based on the geometrically regular subdivision. 
The computational complexity of building such a region-tree is $\mathcal{O}\left(q N\log_2 N\right)$, where $N$ is the number of the given original grid vertices and $q$ is the power of the ratio of the maximum distance $d_{max}$ to minimum distance $d_{min}$ between the given original grid vertices.
The process of constructing the auxiliary region-tree is similar to the method in \cite{gwx2016}, but the selection of the representative points is changed.
To be more specific, instead of choosing the barycenters, the correspondence between each grid layer is constructed based on the position relationship of the grid vertices.
There are two advantages for this approach:
the first is its simplicity, there is no need to deal with hanging points when building the auxiliary region-tree, and it is possible to construct the restriction/prolongation operator directly by using the bilinear interpolation function, and it is easy to be generalized to other problems as well, due to all the information we need is only the grid vertices;
the second is its strong convergence, the corresponding relative residual can quickly converge to the given tolerance(It is taken to be  $10^{-6}$ in this paper), thus obtaining the desired numerical solution.
Two- and three-dimensional numerical experiments are given to verify the strong convergence of the proposed V-ASMG method as a preconditioner of the PCG method.
\end{abstract}



\begin{keyword}
V-ASMG method \sep Auxiliary region-tree \sep GS method \sep PCG method \sep Large sparse linear equations \sep Linear elasticity equations \sep Finite element method

\MSC[2020] 35B50 \sep 35K55 \sep 65M12 \sep 65R20

\end{keyword}

\end{frontmatter}


\section{Introduction}\label{sec:intro}
In this paper, a vertex-based auxiliary space multigrid(V-ASMG) method is proposed for solving the large sparse linear equations of the following form
\begin{align}
AU=F.
\end{align}
These large sparse linear equations are obtained when solving the linear elasticity equations using the finite element method.
In general, the methods for solving large sparse linear equations can be divided into two categories: direct method and iterative method.
Compared with the direct method, the iterative method requires less editing of the original matrix, less memory, and lower algorithm complexity.
However, its numerical performance is largely affected by matrix properties, which may lead to slow-convergence or even non-convergence.
As a result, many preprocessing and acceleration techniques have been developed.
Among them, the multigrid(MG) method is not only an iterative method, but also an acceleration technology.
Its main idea is to promote the uniform attenuation of error components at different wavelengths through the information exchange between coarse and fine grids, so as to achieve the purpose of accelerating convergence.
The MG method can be further roughly divided into two categories: geometric multigrid(GMG) method and algebraic multigrid(AMG) method.
The GMG method depends on geometric grids, which is more suitable for regular grids, but more difficult to implement for complex geometries and unstructured grids.
While the AMG method can automatically generate coarse grids and interpolation operators based on algebraic properties of coefficient matrices for arbitrary geometric networks, but its construction mode is relatively complex.

The GMG method can be traced back to the 60s and 70s in the 20th century. 
In 1964, Fedorenko proposed an iterative method based on two-layer grid in his work \cite{f1964}, which introduced the idea of coarse grid correction for the first time. 
And then, Brandt's work \cite{b1977} in 1977 systematically established the basic framework for GMG method, including the core ideas of multi-level grid structure, relaxation iteration and coarse grid correction, etc.
These seminal papers laid the foundation for the development of MG methods and promoted their widespread application in scientific computing and engineering.
The 1980s was the period of rapid development of GMG method, and the related research work mainly focused on the theory perfection and application expansion.
The work of Hackbusch\cite{h1980, h1985}, Brandt et al.\cite{b1982} further refined the theoretical basis of GMG method.
Brandt\cite{b1984}, McCormick et al.\cite{m1987} further explored the application of GMG method in the fields of fluid dynamics and partial differential equation solving.
The AMG method also originated during this period, and its  original purpose was to overcome the dependence of GMG method on geometric grids.
The conference paper \cite{bmr1983} of Brandt et al. in 1983 is an early exploration of AMG method, proposing the idea of constructing coarse grids based on matrix algebraic properties such as sparsity and strong connectivity.
Although the complete framework of AMG was not fully formed at this time, it laid the foundation for the subsequent work \cite{rs1987} of Ruge and St$\mathrm{\ddot{u}}$ben in 1987.
In \cite{rs1987}, the basic framework of AMG method was presented systematically for the first time, including coarse grid generation, interpolation operator design and algorithm flow, 
which laid the theoretical foundation of AMG method and became the core reference for subsequent research and application.
The 1990s was the theoretical refinement stage of AMG method. 
In 1996, Van$\mathrm{\check{e}}$k et al.\cite{vmb1996} presented an AMG method based on smoothed aggregation technique, which significantly improved the robustness of AMG method in dealing with complex problems.
Then, in 1999, St$\mathrm{\ddot{u}}$ben published a review\cite{s1999}, which systematically summarized the development process, theoretical basis and application fields of AMG method. 
This review is an authoritative review in the field of AMG and provides researchers with a holistic perspective.
In the early 21st century, AMG methods were widely used in scientific computing and engineering.
In their paper \cite{fy2002} published in 2002,  
Falgout and Yang introduced the AMG solver in the high performance computing (HPC) library Hypre, demonstrating the practical implementation of AMG method in HPC and providing an efficient solution to scientific and engineering problems.
From 2010s to now, thanks to the development of HPC, both GMG and AMG methods have made significant advance in parallel computing and graphics processing unit (GPU) acceleration\cite{ghjrw2016, grsww2015, rsc2014, xz2017}.
The implementation and optimization on GPU significantly improve the computational efficiency of MG method, and provide an important reference for HPC.

The above is a brief history of the development of MG. 
In fact, in addition to aforementioned algorithms, there are many variations.
For example, Bramble et al.\cite{bpx1991} proposed a MG algorithm with non-nested spaces or non-inherited quadratic forms, in which the forms on the coarser grids need not be related to that on the finest.
Then, Xu\cite{x1996} put forward an abstract framework of auxiliary space method, which is a non-nested two-level preprocessing technique based on simple smoother and auxiliary space.
With such an auxiliary space, it is much easier to apply the nested MG methods and easier to generalize to general unstructured grids without adding too much programming effort than traditional solving methods.
Therewith, a new parallel auxiliary grid AMG method was developed by Wang et al.\cite{whcx2013} in 2013.
The coarsing process of such method is based on the region quadtree generated from the auxiliary grid, and makes it possible to explicitly control the sparse pattern and operator complexity of the AMG solver.
Grasedyck et al.\cite{gwx2016} presented an idea of constructing the hierarchical auxiliary coarse grid, on which the GMG method can be utilized after the second layer of coarse grid.
Such a construction is realized by a cluster tree, which in turn is used for the definition of the grid hierarchy from coarse to fine.
Inspired by the method in \cite{gwx2016}, we propose a V-ASMG method, its innovation mainly lies in two points. 
The first point is that we bridge the gap between the primitive unstructured fine grid and the structured coarse grids by the medium of an auxiliary grid and decide whether to subdivide the coarse grid twice according to whether the number of fine grid points in the coarse grid exceeds the given threshold.
This makes it possible to apply the GMG method after the second layer and the correspondence between coarse and fine grid points more balanced.
The other point is that we choose the vertices of the grid as representative points. 
The benefits of this approach are three:
Firstly, the construction process of auxiliary coarse grid is simpler, specifically, there is no need to deal with hanging points. 
Secondly, the prolongation/restriction operator can be constructed by using bilinear interpolation directly. 
Thirdly, the magnitude of the stiffness matrix corresponding to each grid layer  decreases more gently, thus better eliminating the error components of different frequencies.

The rest of the paper is organized as follows. 
Sect. 2 mainly introduces some preliminary knowledge, that is, the main idea of finite element method, some classical algorithms involved in this paper, such as Gauss-Seidel(GS) method and preconditioned conjugate gradient(PCG) method. 
In Sect. 3, we describe the proposed V-ASMG method in detail, including its specific implementation steps and some properties.
Sect. 4 displays some theories of the V-ASMG method as a preconditioner, and it is proved that the V-ASMG method can indeed accelerate the iterative convergence speed by reducing the matrix condition number.
Finally, in Sect. 5, some numerical examples are given to prove that our algorithm as a preconditioner can accelerate the convergence process of the iterative method.
Concluding remarks are given in the last section.

\section{Preliminaries}\label{sec:pre}
In this paper, a V-ASMG method is proposed to solve the linear elasticity equations of the following form
\begin{align}\label{prob}
-\nabla\cdot\sigma(u)& =f, &x \in \Omega,\\
\sigma(u)&=2\mu\epsilon(u)+\lambda {\rm tr} \left(\epsilon(u)\right)\mathrm{I},&x\in \Omega,\\
\epsilon(u)&=\frac12\left(\nabla u+(\nabla u)^\top\right),&x\in \Omega.
\end{align}
The above three equations are the equilibrium equation(Navier's equation), the constitutive equation(generalized Hooke's law) and the geometric equation(Cauchy strain-displacement equation) respectively, which form the basis for analyzing the force and deformation of the elastic body.
The equilibrium equation describes the equilibrium relationship between the stress field $\sigma$ and the volume force $f=(f_1,f_2,\dots,f_d)$ under the static condition, where  $u=(u_1,u_2,\dots,u_d)$ is the displacement field and $d=2,3$ represents the spatial dimension.
The constitutive equation shows the material constitutive relationship between stress $\sigma$ and strain $\epsilon$ for linearly elastic and isotropic materials, where $\mu=\frac{E}{2(1+\nu)}$ and $\lambda=\frac{E\nu}{(1+\nu)(1-2\nu)}$ are Lam$\acute{\mathrm{e}}$ constants, $E$ and $\nu$ are the Young's modulus and Poisson's ratio, respectively.
The geometric equation exhibits the geometric relationship between strain $\epsilon$ and displacement $u$ under the small deformation hypothesis.
$\Omega\subset \mathbb{R}^d$ is an open, connected and bounded domain with the Lipschitz continuous boundary $\Gamma=\partial\Omega$. 

Imposed by the homogeneous Dilichlet boundary condition, the following weak form for \eqref{prob} can be obtained: find $u\in V$ such that 
\begin{align}\label{weak}
a(u,v)=(f,v), \mbox{ for all } v\in V,
\end{align}
where 
\begin{align}
a(u,v)=\int_\Omega \sigma:\epsilon(v) {\rm d}x, \, (f,v)=\int_\Omega f\cdot v{\rm d}x,
\nonumber
\end{align}
\begin{align}
Q:T=\left(\begin{array}{ll}q_{11}&q_{12}\\q_{21}&q_{22}\\\end{array}\right):\left(\begin{array}{ll}t_{11}&t_{12}\\t_{21}&t_{22}\\\end{array}\right)=q_{11}t_{11}+q_{12}t_{12}+q_{21}t_{21}+q_{22}t_{22}
\nonumber
\end{align}
for the two-dimensional case and $V=H^1_0(\Omega)=\{u\in H^1(\Omega)\,|\, u|_{\Gamma}=0\}$. 

For the simplicity of presentation, we only focus on the two-dimensional problem, i.e. $d=2$, higher dimensional case can be treated in a similar way. 
Given a quasi-uniform triangulation partition $\mathcal{T}_h$ of $\Omega$, $\bar{\Omega}=\cup K$. 
It should be noted that triangulation partition is merely taken as an example.
In fact, it doesn't matter how we split the computational domain, because what we really need is the information about the vertices of the grid.
Let $V_h\subset V$ be a finite element space with respect to the triangulation $\mathcal{T}_h$ as follows
\begin{align*}
V_h&=\{u_h\in H^1_0(\Omega)\,|\, u_h \mbox{ is a piecewisely linear continuous function}\}\\
&=\mbox{Span}(\phi_i, i=1,2,..,N),
\end{align*}
where $N$ is the number of vertices and $\phi_i$ is the nodal basis function of $V_h$, which is $1$ at vertice $p_i$ and $0$ at every other vertices $p_j$ $(j\neq i)$. Then we get the discretization form of \eqref{weak}: find $u_h\in V_h$ such that
\begin{align}\label{discrete}
a(u_h,v_h)=(f,v_h), \,\mbox{ for all }v_h\in V_h.
\end{align}

By setting $f=(f_{1},f_{2})$ and $u_h=(u_{1h},u_{2h})$ with $u_{1h}=\Phi U_1$ and $u_{2h}=\Phi U_2$, where 
\begin{align}
\Phi=(\phi_1,\phi_2,\cdots,\phi_N),
U_{i}=(u_{i1},u_{i2},\cdots,u_{iN})^\top, \, i=1,2,
\nonumber
\end{align}
we can also obtain the matrix form of \eqref{discrete} as follows
\begin{align}
\left(
\begin{array}{ll}
\lambda A_{11}+2\mu A_{11}+\mu A_{22}&\lambda A_{12}+\mu A_{21}\\
\lambda A_{21}+\mu A_{12}&\lambda A_{22}+2\mu A_{22}+\mu A_{11}\\
\end{array}
\right)
\left(
\begin{array}{ll}
U_1\\
U_2\\
\end{array}
\right)=
\left(
\begin{array}{ll}
F_1\\
F_2\\
\end{array}
\right),
\end{align}
where 
\begin{align}
\nonumber
A_{11}=\int_\Omega\Phi_x^\top \Phi_x {\rm d}x{\rm d}y&, A_{12}=\int_\Omega\Phi_x^\top \Phi_y {\rm d}x{\rm d}y,\\ 
\nonumber
A_{21}=\int_\Omega\Phi_y^\top \Phi_x {\rm d}x{\rm d}y&, A_{22}=\int_\Omega\Phi_y^\top \Phi_y {\rm d}x{\rm d}y,\\ 
\nonumber
F_1 = \int_\Omega f_1\Phi^\top  {\rm d}x{\rm d}y&, F_2 = \int_\Omega f_2\Phi^\top {\rm d}x{\rm d}y,
\nonumber
\end{align}
$\Phi_x$ and $\Phi_y$ represent the partial derivatives of $\Phi$ with respect to $x$ and $y$, respectively. At this point, we get the following system of equations to be solved in this paper 
\begin{align}\label{equ}
AU=F.
\end{align}
It is easy to verify that $A$ is a symmetric positive-definite matrix, thus the discrete system \eqref{discrete} admits a unique solution.

There are two kinds of methods for solving the linear equations such as \eqref{equ}: direct method and iterative method.
Compared with the direct method, the iterative method has the advantages of less editing of the original matrix, less memory, lower algorithm complexity and so on, thus the iterative method is chosen in this paper.
The iterative method can be divided into two categories, stationary iterative method\cite{hy1981, v2000, y1971} (such as Jacobi method, GS method, etc.) and non-stationary iterative method\cite{bbcdddeprv1994, hs1952, ss1986} (for example, PCG method, Krylov method and so on). 
The difference between these two types of methods is whether the iterative matrix is constant.
The implementation of stationary iteration method is relatively simple, but for some complex problems its efficiency is not as high as that of non-stationary iteration method.
The main idea of the non-stationary iteration method is to project a large problem onto a lower-dimensional subspace, on which the optimal approximation of \eqref{equ} in a certain sense can be obtained.
In the next two subsections, we will briefly introduce the GS and PCG methods to be used in this paper, in which GS method acts as the smoothing algorithm of V-ASMG method, and PCG method with V-ASMG method as its preconditioner is chosen to be the solution method.

\subsection{GS method}
GS method\cite{gv2013, s2003} is a classical stationary iterative method.
It is an improved version of Jacobi method, which accelerates the convergence rate by utilizing the most recently calculated components.
For the system of linear equations \eqref{equ}, the updated formula of GS method is as follows
\begin{align}
u_{i}^{(k+1)}=\frac{1}{a_{ii}}\left[f_i-\sum_{j<i}a_{ij}u_{j}^{(k+1)}-\sum_{j>i}a_{ij}u_j^{(k)}\right],\ i=1,2,...,n=2N,
\end{align}
where $u_{i}^{(k)}$ is the approximation of the $i$-th component of unknown quantity $U$ in the $k$-th iteration, 
$a_{ij}$ is the element of stiffness matrix $A$ in the $i$-th row and $j$-th column, $f_i$ is the $i$-th component of load vector $F$ and $n$ is the dimension of the matrix $A$. 
Dividing the original stiffness matrix $A$ into the sum of the diagonal matrix $D$, the strictly lower trigonometric matrix $L$, and the strictly superior trigonometric matrix $S$, i.e. 
\begin{align}
\begin{split}
{A}&={D}-{L}-{S}\\
&=\begin{pmatrix}
a_{11}&&&\\
&a_{22}&&\\
&&\ddots&\\
&&&a_{nn}
\end{pmatrix}+\begin{pmatrix}
0& 0 & \cdots & 0\\
-a_{21} &0 & \cdots & 0\\
\vdots&\vdots&\ddots&\vdots\\
-a_{n1} &-a_{n2} & \cdots & 0
\end{pmatrix}+\begin{pmatrix}
0& -a_{12} & \cdots & -a_{1n}\\
0 &0 & \cdots & -a_{2n}\\
\vdots&\vdots&\ddots&\vdots\\
0 &0 & \cdots & 0
\end{pmatrix},
\end{split}
\nonumber
\end{align}
the following matrix form of GS method can be further obtained
\begin{align}
{U}^{(k+1)}= ({D}-{L})^{-1}{S}{U}^{(k)}+({D}-{L})^{-1}{F}.
\end{align}
The convergence of the GS method depends on the properties of the stiffness matrix $A$. 
If the stiffness matrix $A$ satisfies one of the following conditions:

\begin{itemize}
\item[(1)] the stiffness matrix $A$ ia a strictly diagonally dominant matrix;
\item[(2)] the stiffness matrix $A$ is a symmetric positive-definite matrix;
\item[(3)] the spectral radius of the iterative matrix is less than 1, i.e. $\rho\left(({D}-{L})^{-1}{S}\right)<1$;
\end{itemize}
then the GS method converges.
The pseudo-code framework of GS method is as Algorithm \ref{gs}.
\begin{algorithm2e}
 \caption{GS algorithm}
 \label{gs}
 \renewcommand{\algorithmicrequire}{\textbf{Input: }}
 \renewcommand{\algorithmicensure }{\textbf{Output:}}
 \begin{algorithmic}[1]
 \REQUIRE the stiffness matrix $A$, 
          the load vector $F$, 
          the initial solution $U^{(0)}$, 
          maximum number of iteration steps $k_{max}$ and 
          maximum allowable error $\epsilon$;
 \STATE{\bf{Initialization:} $U = U^{(0)}, k = 0;$}
 \WHILE{$k < k_{max}$}
 \STATE{$U_{old} = U$;  $\%$ Save the current solution}
 \FOR{$i$ from $1$ to $n$}
   \STATE{$\sigma = 0$;}
   \FOR{$j$ from $1$ to $i-1$}
     \STATE{$\sigma = \sigma + A[i][j] * U[j]$;}
   \ENDFOR
   \FOR{$j$ from $i+1$ to $n$}
     \STATE{$\sigma = \sigma + A[i][j] * U_{old}[j]$;}
   \ENDFOR
 \ENDFOR
 \STATE{$U[i] = (F[i] - \sigma) / A[i][i]$;}

 \STATE{$\%$ Check the convergence condition}
 \IF{$err=||U - U_{old}|| < \epsilon$}
   \STATE{Break;}
 \ENDIF
 \STATE{$k = k + 1$;}
 \ENDWHILE
 \ENSURE the final solution $U$, 
         the iteration steps $k$ and 
         the error $err$. 
 \end{algorithmic}
\end{algorithm2e}
The GS method is simple to implement and requires less computation, but its convergence is highly dependent on the properties of matrix $A$ and may not converge or converge slowly for non-diagonally dominant or ill-conditioned matrices. 
In this paper, it is selected as the post- and back-smoothing algorithms of the V-ASMG method.

\subsection{PCG method}
CG method\cite{gv2013, hs1952, s2003, tb1997} is an iterative algorithm suitable for solving symmetric positive-definite linear equations such as \eqref{equ}. 
It is a kind of Krylov subspace method, which has fast convergence rate and low computational complexity, especially suitable for solving large-scale sparse linear system. 
Its main idea is to approach the exact solution by constructing a set of conjugate directions.
In theory, the CG method needs at most $n$ iterations to converge to the exact solution.
However, in practical applications, more iterations are usually required due to numerical errors.
The error of the $k$-th iteration of CG method satisfies
\begin{align}
\Vert {U}^{(k)}-{U}^*\Vert_{A}\leq 2\left(\frac{\sqrt{\kappa}-1}{\sqrt{\kappa}+1}\right)^k\Vert {U}^{(0)}-{U}^*\Vert_{A},
\end{align}
where ${U}^*$ denotes the exact solution, $\kappa=\Vert A\Vert\Vert A^{-1}\Vert$ is the condition number for matrix $A$.
In other words, the smaller the condition number of matrix $A$, the faster the convergence of the CG method. 
Therefore, it is common to improve the condition number of the stiffness matrix $A$ by introducing a preconditioned matrix $M$, so as to accelerate the convergence rate of the CG method.
This is the so-called PCG method.
Its pseudo-code framework is shown in Algorithm \ref{pcg}.
\begin{algorithm2e}
 \caption{PCG algorithm}
 \label{pcg}
 \renewcommand{\algorithmicrequire}{\textbf{Input: }}
 \renewcommand{\algorithmicensure }{\textbf{Output:}}
 \begin{algorithmic}[1]
 \REQUIRE the preconditioned matrix $M$,
          the stiffness matrix $A$, 
          the load vector $F$, 
          the initial solution $U^{(0)}$, 
          maximum number of iteration steps $k_{max}$ and 
          maximum allowable residual $\epsilon$;
 \STATE{\bf{Initialization:} $U = U^{(0)}, r = F - AU, res_{0} = \Vert r \Vert, solve Mz = r, p = z, \rho_{old} = r^{\top}z, k = 1;$}
 \WHILE{$k < k_{max}$}
 \STATE{$q = Ap$;}
 \STATE{$\alpha = \rho_{old} / p^{\top}q$;}
 \STATE{$U = U + \alpha p$;}
 \STATE{$r = r - \alpha q, res = \Vert r \Vert$;}
 \STATE{solve $Mz = r$;}
 \STATE{$\rho_{new} = r^{\top}z$;}

 \STATE{$\%$ Check the convergence condition}
 \IF{$rel\_res = res / res_{0} < \epsilon$}
   \STATE{Break;}
 \ENDIF
 \STATE{$\beta = \rho_{new} / \rho_{old}$;}
 \STATE{$p = z + \beta p$;}
 \STATE{$\rho_{old} = \rho_{new}$;  $\%$ Save the current value}
 \STATE{$k = k + 1$;}
 \ENDWHILE
 \ENSURE the final solution $U$, 
         the iteration steps $k$ and 
         the relative residual $rel\_res$. 
 \end{algorithmic}
\end{algorithm2e}
In particular, when the preconditioned matrix $M$ is the identity matrix $I$, the PCG method degenerates to the ordinary CG method.
The choice of the preconditioned matrix $M$ is crucial to the performance of PCG method.
Common pretreatment methods include Jacobi preconditioning, incomplete Cholesky factorization, symmetric successive over-relaxation preconditioning, MG method and so on.
In this paper, the PCG method is selected as the iterative solution method, and the V-ASMG method is chosen to be its preconditioner.

\section{V-ASMG method}
As illustrated in the introduction section, the GMG method is highly dependent on a given hierarchy of geometric grids, which is not readily available in most of the time.
The AMG method is proposed as a means to generalize the GMG method for solving the systems that share properties with their homologous discretized partial differential equations(PDEs), which may potentially generate unstructured grids in the discretization process.
Unfortunately, its ability to handle complex problems also increases the difficulty of operation.
In this paper, the relationship between the primitive unstructured fine grid and the structured coarse grids is established through the medium of an auxiliary grid, which makes it possible to apply the GMG method after the second layer.
The main key of such V-ASMG method lies in an auxiliary region-tree structure based on the geometrically regular subdivision. 

The three key points of MG method are: fine grid smoothing, coarse grid correction and prolongation/restriction operator construction. 
Among them, the high frequency error component is eliminated by fine grid smoothing, and the low frequency error component is eliminated by coarse grid correction, and different grid layers are connected by prolongation/restriction operator.
It is such alternating iteration on the coarse and fine grids that promotes the uniform decay of the error components at each frequency.
The cycle modes of different grid layers of MG method are V-cycle, W-cycle, etc., as shown in \figurename \ref{cycle}.
\begin{figure}[!ht]
	\centerline{
\subfigure[V-cycle.] {\includegraphics[width=0.195\textwidth]{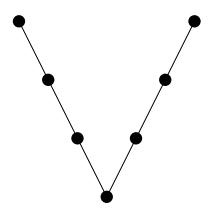}}
\subfigure[W-cycle.] {\includegraphics[width=0.413\textwidth]{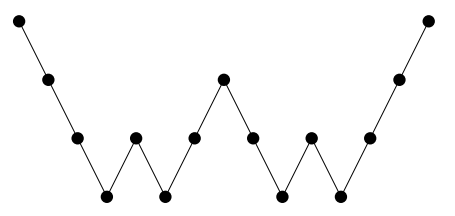}}}
	\caption{The cycle modes of different grid layers of MG method.}
	\label{cycle}
\end{figure}
It should be noted that the multi-layer MG method is a recursive form of the two-layer MG method. 
Therefore, in this paper, we take the two-layer V-cycle MG method as an example to introduce the basic flow for solving the linear equations \eqref{equ}:
\begin{itemize}
\item[Step 1.] Setup step:
Before using the MG method to perform recursive iterative computation, it is necessary to construct the mapping relationship between the fine grid and the coarse grid, namely, the prolongation operator $\mathcal{P}$ and the restriction operator $\mathcal{R}$(The specific construction will be introduced in detail in the Sec. \ref{section:PR}).
And these two operators transpose each other, which is satisfied
\begin{align}\label{R}
\mathcal{R}=\mathcal{P}^{\top}.
\end{align} 
Furthermore, the stiffness matrix of the coarse grid is calculated according to the following equation
\begin{align}\label{Ac}
A^c=\mathcal{R}A^f\mathcal{P}=\mathcal{R}A\mathcal{P}.
\end{align} 

\item[Step 2.] Application step:
\begin{itemize}
\item[Step 2.1.] Pre-smoothing:
Given an initial solution $U^{(0)}$, relax $n_1$ times for \eqref{equ} by adopting a general iteration method(Such as Jacobi method, GS method and so on) to eliminate the high frequency error component, and obtain $U^{(1)}$ and the corresponding residual
\begin{align}\label{rf}
r^f=F-AU^{(1)}.
\end{align}
\item[Step 2.2.] Restriction:
After several iterations of smoothing on the fine grid, the high frequency error component has become very small.
At this time, restrict the residual $r^f$ on the fine grid to the coarse grid according to the mapping relationship \eqref{rc}, so that the low frequency error component can be changed into the relative high frequency error component on the coarse grid.
\begin{align}\label{rc}
r^c=\mathcal{R}r^f.
\end{align}
\item[Step 2.3.] Solving:
Solve the residual equation \eqref{res_equ} on the coarse grid to eliminate the low frequency error component that decays slowly on the fine grid.
Due to the large scale of the coarse grid, the magnitude of the coarse grid stiffness matrix $A^c$ is usually much smaller than that of the original stiffness matrix $A^f=A$, so the residual equation \eqref{res_equ} on the coarse grid can be solved by a direct method(Such as LU decomposition, Cholesky decomposition and so on).
\begin{align}\label{res_equ}
A^ce^c=r^c.
\end{align}
\item[Step 2.4.] Prolongation:
Map the solution $e^c$ of equation \eqref{res_equ} onto the fine grid to obtain the corresponding error $e^f$, and use it to correct the solution $U^{(1)}$ on the fine grid
\begin{align}\label{U2}
U^{(2)}=U^{(1)}+e^f=U^{(1)}+\mathcal{P}e^c.
\end{align}
\item[Step 2.5.] Post-smoothing:
Taken the corrected solution $U^{(2)}$ as the initial solution, relax $n_2$(In general, $n_2=n_1$) times for \eqref{equ} by using the same iterative method as Step 2.1 to obtain the more accurate solution $U^{(3)}$. 
It should be noted that the iterative order of the pre- and post-smoothing is opposite, taking GS method as an example, the pre-smoothing is updated from 1 to $n$ in turn, while the post-smoothing needs to be updated from $n$ to 1 in the fourth step of Algorithm 1.
\end{itemize}
\end{itemize}
The pseudo-code framework of two-layer V-cycle MG method is as Algorithm \ref{mg}.
\begin{algorithm2e}
 \caption{Two-layer V-cycle MG algorithm}
 \label{mg}
 \renewcommand{\algorithmicrequire}{\textbf{Input: }}
 \renewcommand{\algorithmicensure }{\textbf{Output:}}
 \begin{algorithmic}[1]
 \REQUIRE the stiffness matrix $A$, 
          the load vector $F$, 
          the initial solution $U^{(0)}$, 
          maximum number of iteration steps $k_{max}$, 
          maximum allowable error $\epsilon$,
          the umber of pre-smoothing $n_1$ and
          the umber of post-smoothing $n_2$;
 \STATE{{\bf{Setup:}} construct the prolongation operator $\mathcal{P}$ and the restriction operator $\mathcal{R}$, and the coarse grid stiffness matrix $A^c$;}
 \STATE{\bf{Initialization:} $U = U^{(0)}, res_{0} = \Vert F-AU^{(0)} \Vert, k = 0;$}
 \WHILE{$k < k_{max}$}
 \STATE{{\bf{Pre-smoothing:}} $U^{(1)}=GS(A,F,U,n_1,\epsilon);$}
 \STATE{{\bf{Restriction:}} $r^f=F-AU^{(1)}$, $r^c=\mathcal{R}r^f$;}
 \STATE{{\bf{Solving:}} solve $A^ce^c=r^c$;}
 \STATE{{\bf{Prolongation:}} $e^f=\mathcal{P}e^c$, $U^{(2)}=U^{(1)}+e^f$;}
 \STATE{{\bf{Post-smoothing:}} $U^{(3)}=GS(A,F,U^{(2)},n_2,\epsilon);$}
 \STATE{$\%$ Check the convergence condition}
 \IF{$rel\_{res}=||F-AU^{(3)}||/res_0 < \epsilon$}
   \STATE{Break;}
 \ENDIF
 \STATE{$k = k + 1$, $U = U^{(3)}$;}
 \ENDWHILE
 \ENSURE the final solution $U$, 
         the iteration steps $k$ and 
         the error $rel\_{res}$. 
 \end{algorithmic}
\end{algorithm2e}
Next, we will introduce how to construct the grid hierarchy relationship, and how to construct the prolongation/restriction operator in detail.

\subsection{Grid hierarchy relationship}
In this section, we will give the detailed expression on how to generate a hierarchy of structured coarse grids based on the given unstructured fine grid. 
The main idea is to embed the vertices of the original unstructured fine grid into the first layer of the structured coarse grids regardless of the initial fine grid structure and then apply the GMG method after the second layer. 
And based on such a hierarchy of structured coarse grids, build the grid hierarchy relationship and the prolongation/restriction operator.

\subsubsection{The relationship between the fine grid and the coarse grid of the first layer}
Take the two-dimensional case as an example, and the three-dimensional case can be generalized similarly.
Given a unstructured triangulation partition $\mathcal{T}_h$ (cf.\figurename \ref{fine_grid}) with all vertexes stored in $\mathcal{V}^f=\{p_i^f=(x_i^f,y_i^f)|_{i=1}^{N^f}\}$, where $N^f=N=19$ represents the number of fine grid vertices, the following steps allow to construct an auxiliary region-tree structure:
\begin{figure}[!ht]
	\centerline{{\includegraphics[width=0.4\textwidth]{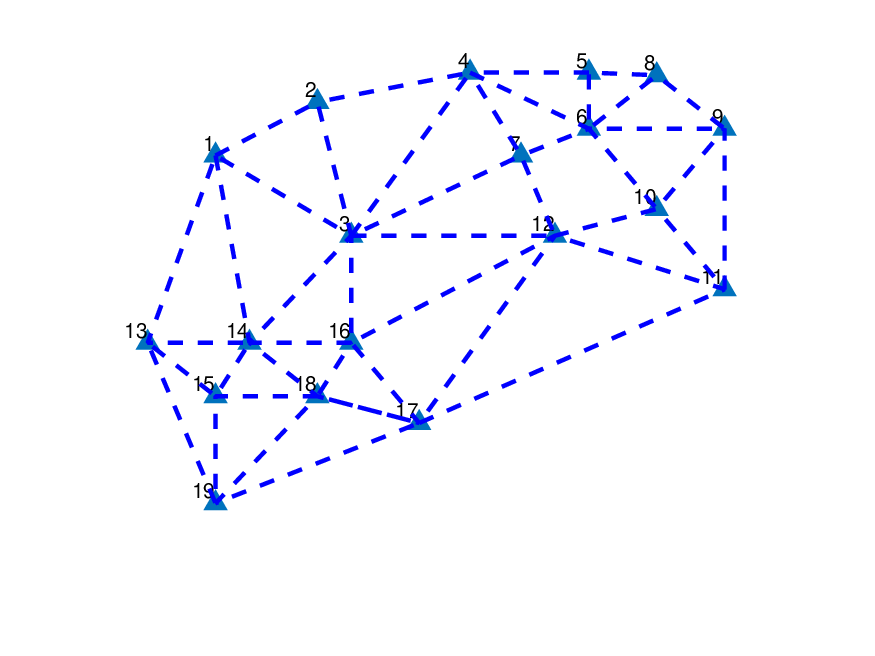}}}
	\caption{The profile of unstructured triangulation partition $\mathcal{T}_h$.}
	\label{fine_grid}
\end{figure}
\begin{itemize}
\item[Step 1.] 
Ignore the grid structure and find a square region $\mathcal{D}^1$ to overlap all vertexes containing in $\mathcal{V}^f$. 
A direct choice is to choose $\mathcal{D}^1$ as $[\mathcal{L}^{1},\mathcal{R}^{1}]\times[\mathcal{B}^{1},\mathcal{U}^{1}]$(cf.\figurename \ref{coarse_grid0}), where
\begin{align}
\mathcal{L}^{1} = \min_{p_i^f\in \mathcal{V}^f} x_i^f,
\mathcal{B}^{1} = \min_{p_i^f\in \mathcal{V}^f} y_i^f,&
\mathcal{L}^{1} = \min\{\mathcal{L}^{1}, \mathcal{B}^{1}\},
\mathcal{B}^{1} = \mathcal{L}^{1}; \\
\mathcal{R}^{1} = \max_{p_i^f\in \mathcal{V}^f} x_i^f,
\mathcal{U}^{1} = \max_{p_i^f\in \mathcal{V}^f} y_i^f,&
\mathcal{R}^{1} = \max\{\mathcal{R}^{1}, \mathcal{U}^{1}\},
\mathcal{U}^{1} = \mathcal{R}^{1}.
\end{align}
$\mathcal{D}^1$ corresponds to the root node of the auxiliary region-tree, which is initially marked as a leaf node.
$\mathcal{V}^1$ is its corresponding vertex set, which is empty at the initial time.
\begin{figure}[!ht]
	\centerline{{\includegraphics[width=0.4\textwidth]{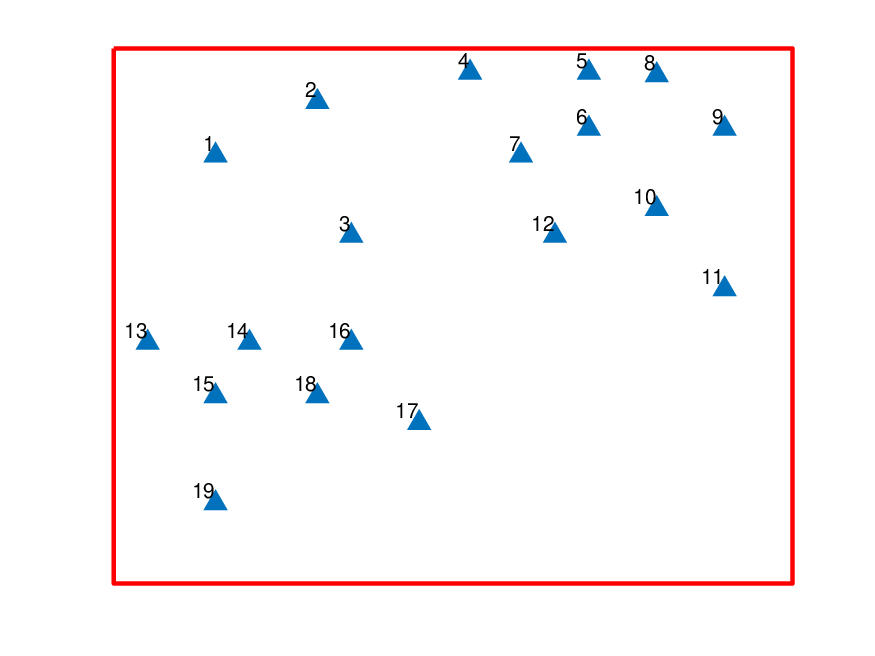}}}
	\caption{The profile of square region $\mathcal{D}^1$.}
	\label{coarse_grid0}
\end{figure}

\item[Step 2.] 
Insert all vertexes into the auxiliary region-tree in turn. 
Once the number of vertexes in the vertex set of a certain square region exceeds the given threshold (In our case, it is taken as 4), unmark its leaf node sign, split it into four child regions and mount the members in its vertex set to its four child regions.
Until all vertexes are inserted into the region-tree, the auxiliary coarse grid is obtained.
Specific steps are as follows: 
\begin{itemize}
\item[Step 2.1.] 
Insert vertexes 1-4 in sequence to the vertex set $\mathcal{V}^1$ of square region $\mathcal{D}^1$(cf.\figurename \ref{coarse_grid1}). 
The corresponding auxiliary region-tree is shown in \figurename \ref{region-tree1}.
\begin{figure}[!ht]
	\centerline{{\includegraphics[width=0.4\textwidth]{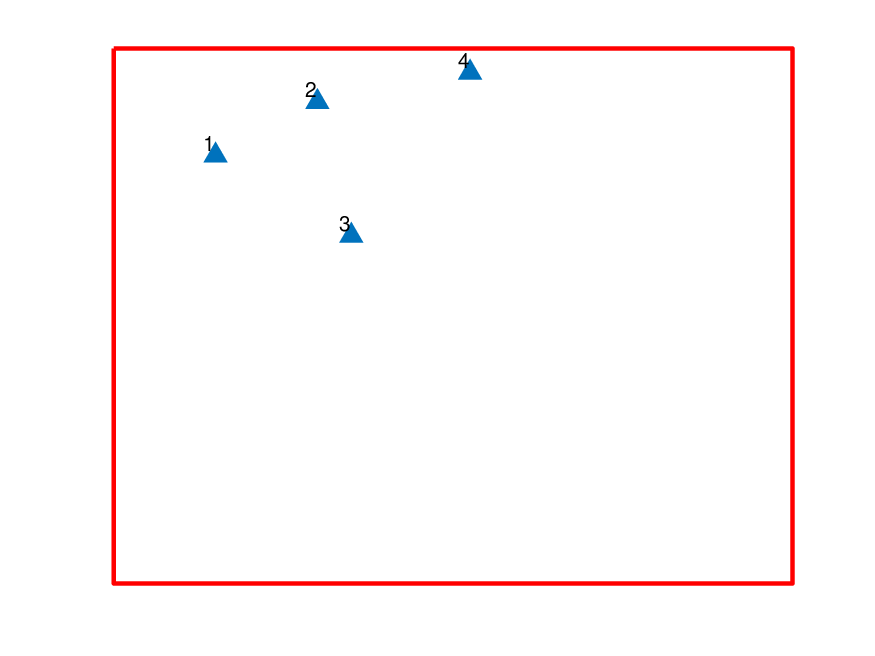}}}
	\caption{Coarse grid subdivision and vertex distribution corresponding to Step 2.1.}
	\label{coarse_grid1}
\end{figure}
\begin{figure}[!ht]
	\centerline{{\includegraphics[width=0.15\textwidth]{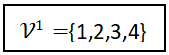}}}
	\caption{The auxiliary region-tree corresponding to Step 2.1.}
	\label{region-tree1}
\end{figure}
\item[Step 2.2.] 
The vertex set $\mathcal{V}^1=\{1,2,3,4\}$ of square region $\mathcal{D}^1$ is overloaded when inserting vertex 5(cf.\figurename \ref{coarse_grid2}(a)).
Unmark the leaf node sign of $\mathcal{D}^1$ and split it into four child regions $\mathcal{D}^2_j, j=1,2,3,4$(Note that these child regions are numbered in clockwise order and labeled as leaf nodes at the initial moment), where 
\begin{align}
\mathcal{D}^2_1=\left[\mathcal{L}^{1},\frac{\mathcal{L}^{1}+\mathcal{R}^{1}}{2}\right]\times\left[\frac{\mathcal{B}^{1}+\mathcal{U}^{1}}{2},\mathcal{U}^{1}\right],\\
\mathcal{D}^2_2=\left[\frac{\mathcal{L}^{1}+\mathcal{R}^{1}}{2},\mathcal{R}^{1}\right]\times\left[\frac{\mathcal{B}^{1}+\mathcal{U}^{1}}{2},\mathcal{U}^{1}\right],\\
\mathcal{D}^2_3=\left[\frac{\mathcal{L}^{1}+\mathcal{R}^{1}}{2},\mathcal{R}^{1}\right]\times\left[\mathcal{B}^{1},\frac{\mathcal{B}^{1}+\mathcal{U}^{1}}{2}\right],\\
\mathcal{D}^2_4=\left[\mathcal{L}^{1},\frac{\mathcal{L}^{1}+\mathcal{R}^{1}}{2}\right]\times\left[\mathcal{B}^{1},\frac{\mathcal{B}^{1}+\mathcal{U}^{1}}{2}\right].
\end{align}
Release $\mathcal{V}^1$ and insert vertexes in $\mathcal{V}^1$  into the vertex sets $\mathcal{V}^2_j, j=1,2,3,4$ of corresponding child regions(cf.\figurename \ref{coarse_grid2}(b)). 
Continue to insert vertexes 5-7 into vertex set $\mathcal{V}^2_2$(cf.\figurename \ref{coarse_grid2}(c)).
The corresponding auxiliary region-tree is shown in \figurename \ref{region-tree2}.
\begin{figure}[!ht]
	\centerline{
\subfigure[] {\includegraphics[width=0.40\textwidth]{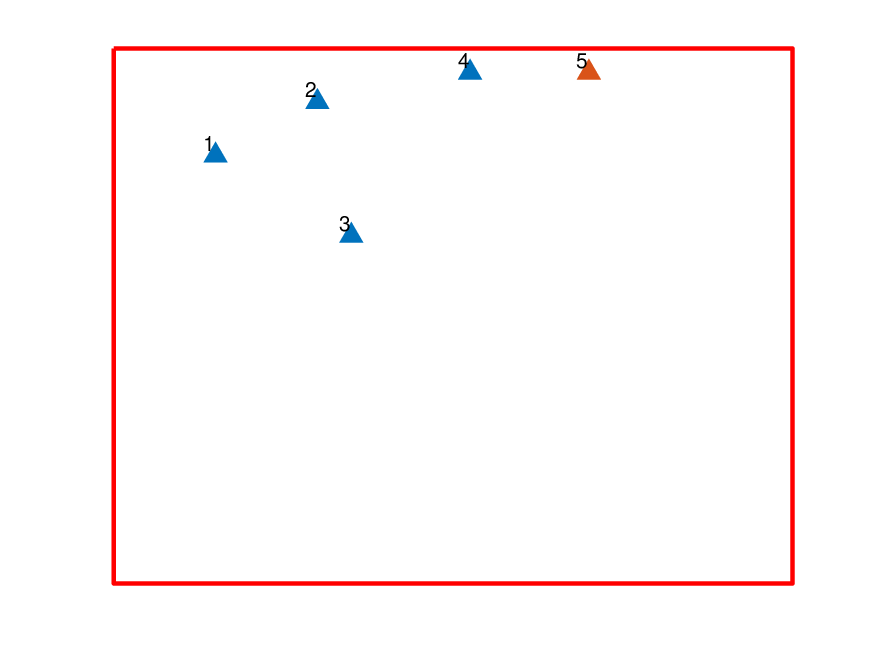}}\hspace{-0.65cm}
\subfigure[] {\includegraphics[width=0.40\textwidth]{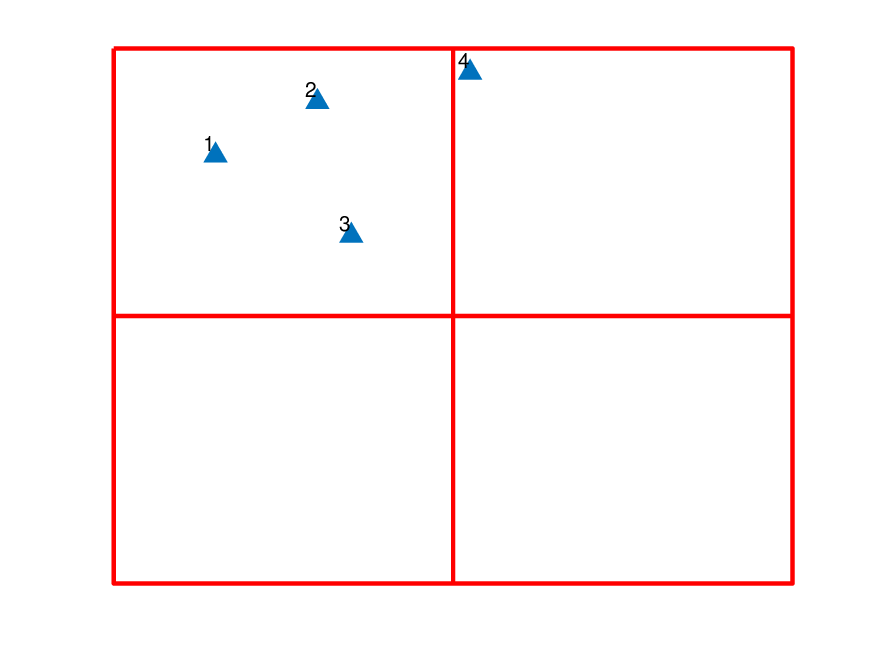}}\hspace{-0.65cm}
\subfigure[] {\includegraphics[width=0.40\textwidth]{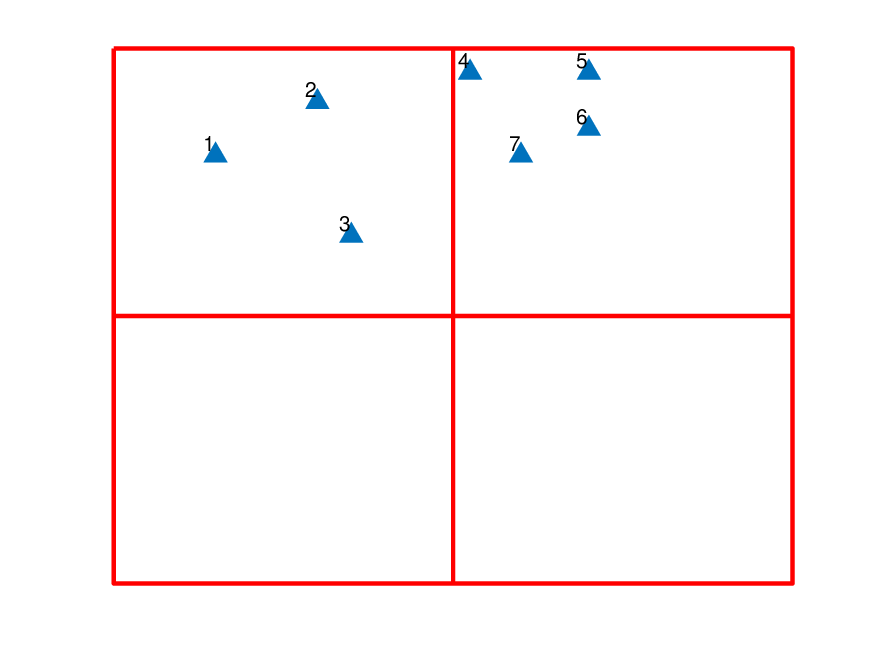}}}
	\caption{Coarse grid subdivision and vertex distribution corresponding to Step 2.2.}
	\label{coarse_grid2}
\end{figure}
\begin{figure}[!ht]
	\centerline{{\includegraphics[width=0.45\textwidth]{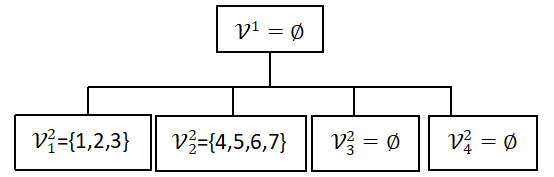}}}
	\caption{The auxiliary region-tree corresponding to Step 2.2.}
	\label{region-tree2}
\end{figure}
\item[Step 2.3.] 
The vertex set $\mathcal{V}^2_2=\{4,5,6,7\}$ of square region $\mathcal{D}^2_2$ is overloaded when inserting vertex 8(cf.\figurename \ref{coarse_grid3}(a)).
Unmark the leaf node sign of $\mathcal{D}^2_2$ and split it into four child regions $\mathcal{D}^3_j, j=1,2,3,4$(The partitioning mode is similar to $\mathcal{D}^1$). 
Release $\mathcal{V}^2_2$ and insert vertexes in $\mathcal{V}^2_2$  into the vertex sets $\mathcal{V}^3_j, j=1,2,3,4$ of corresponding child regions(cf.\figurename \ref{coarse_grid3}(b)). 
Continue to insert vertexes 8-9 into vertex set $\mathcal{V}^3_2$, vertexes 10-11 into vertex set $\mathcal{V}^3_3$, vertex 12 into vertex set $\mathcal{V}^3_4$, vertexes 13-16 into vertex set $\mathcal{V}^2_4$(cf.\figurename \ref{coarse_grid3}(c)).
The corresponding auxiliary region-tree is shown in \figurename \ref{region-tree3}.
\begin{figure}[!ht]
	\centerline{
\subfigure[] {\includegraphics[width=0.4\textwidth]{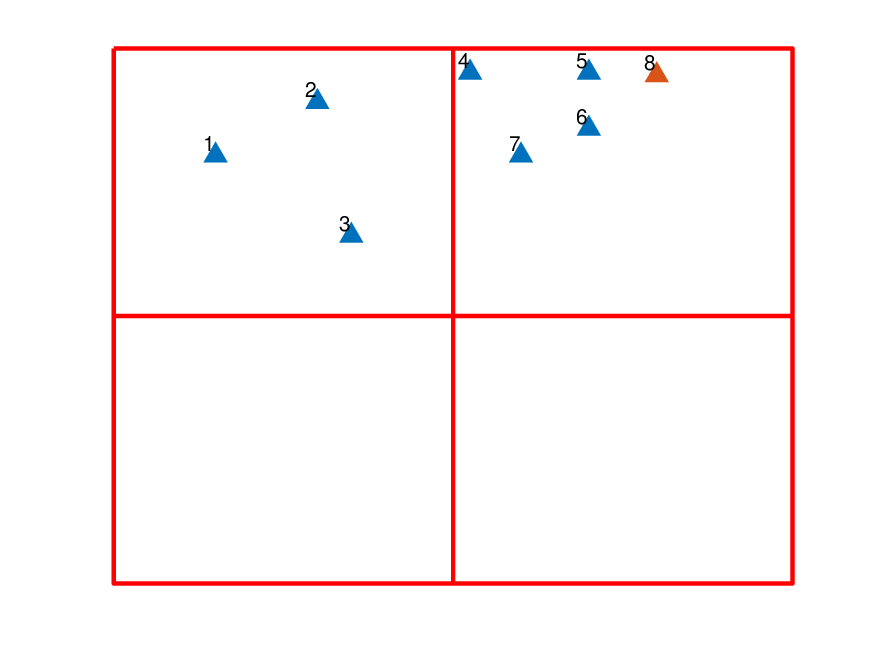}}\hspace{-0.65cm}
\subfigure[] {\includegraphics[width=0.4\textwidth]{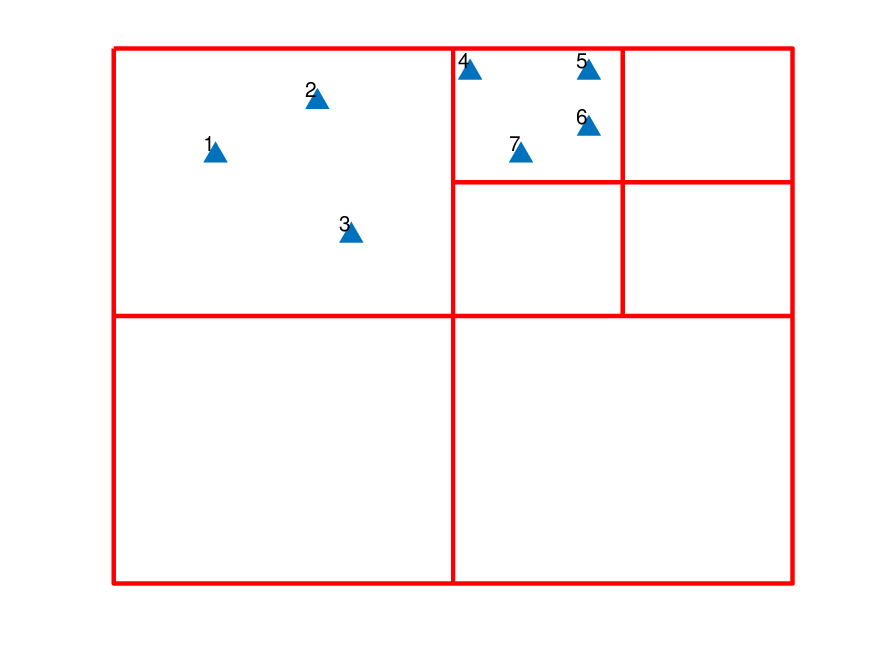}}\hspace{-0.65cm}
\subfigure[] {\includegraphics[width=0.4\textwidth]{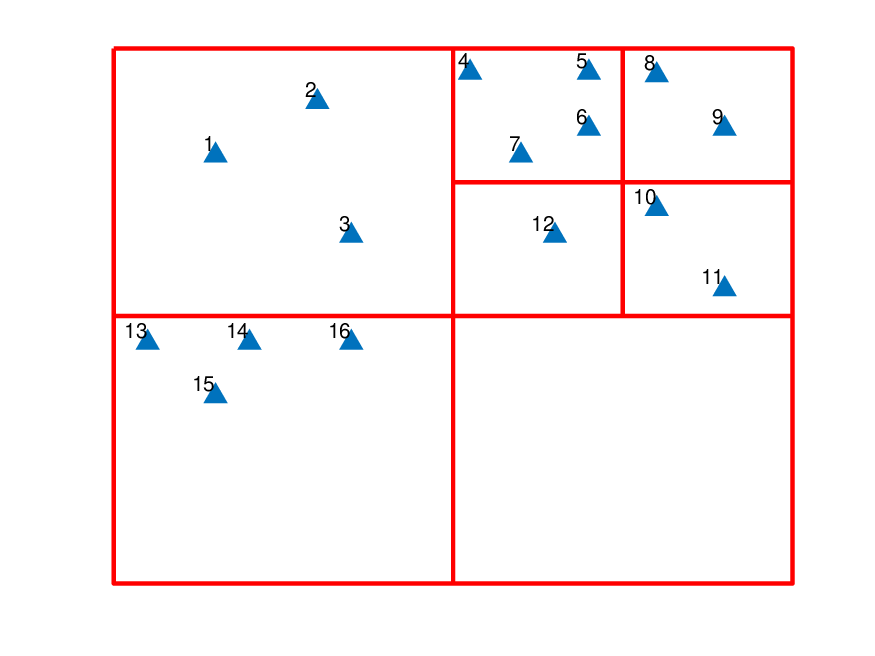}}}
	\caption{Coarse grid subdivision and vertex distribution corresponding to Step 2.3.}
	\label{coarse_grid3}
\end{figure}
\begin{figure}[!ht]
	\centerline{{\includegraphics[width=0.6\textwidth]{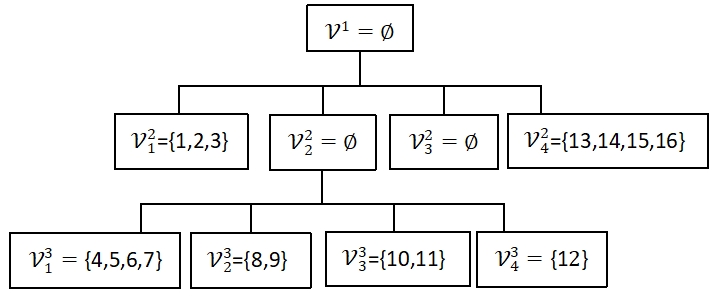}}}
	\caption{The auxiliary region-tree corresponding to Step 2.3.}
	\label{region-tree3}
\end{figure}
\item[Step 2.4.] 
The vertex set $\mathcal{V}^2_4=\{13,14,15,16\}$ of square region $\mathcal{D}^2_4$ is overloaded when inserting vertex 17(cf.\figurename \ref{coarse_grid4}(a)).
Unmark the leaf node sign of $\mathcal{D}^2_4$ and split it into four child regions $\mathcal{D}^3_j, j=5,6,7,8$(The partitioning mode is similar to $\mathcal{D}^1$). 
Release $\mathcal{V}^2_4$ and insert vertexes in $\mathcal{V}^2_4$  into the vertex sets $\mathcal{V}^3_j, j=5,6,7,8$ of corresponding child regions(cf.\figurename \ref{coarse_grid4}(b)). 
Continue to insert vertexes 17-18 into vertex set $\mathcal{V}^3_6$, vertex 19 into vertex set $\mathcal{V}^3_8$(cf.\figurename \ref{coarse_grid4}(c)).
The corresponding auxiliary region-tree is shown in \figurename \ref{region-tree4}.
At this point, all the vertexes 1-19 in $\mathcal{V}^f$ are inserted into the region-tree.
\begin{figure}[!ht]
	\centerline{
\subfigure[] {\includegraphics[width=0.4\textwidth]{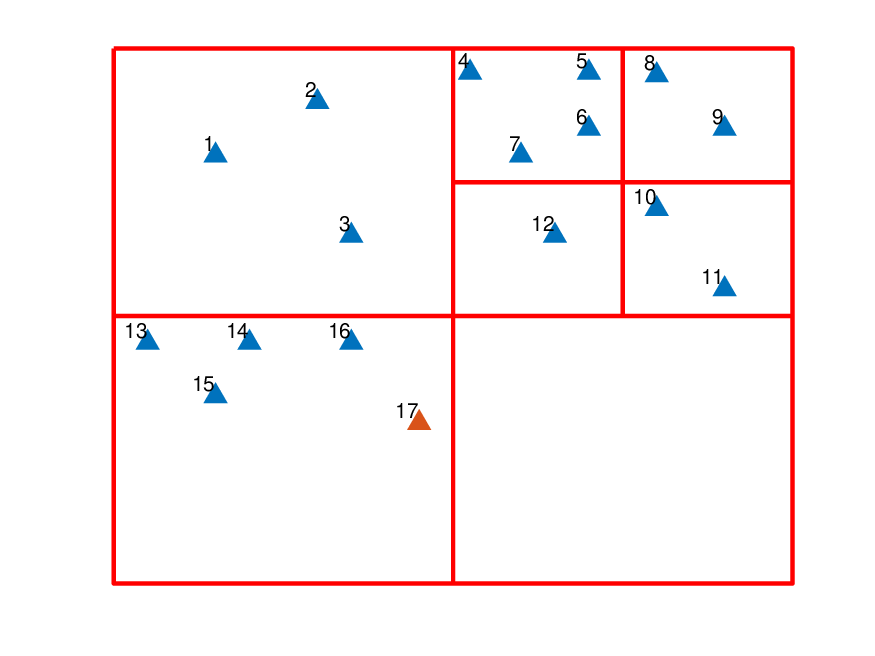}}\hspace{-0.65cm}
\subfigure[] {\includegraphics[width=0.4\textwidth]{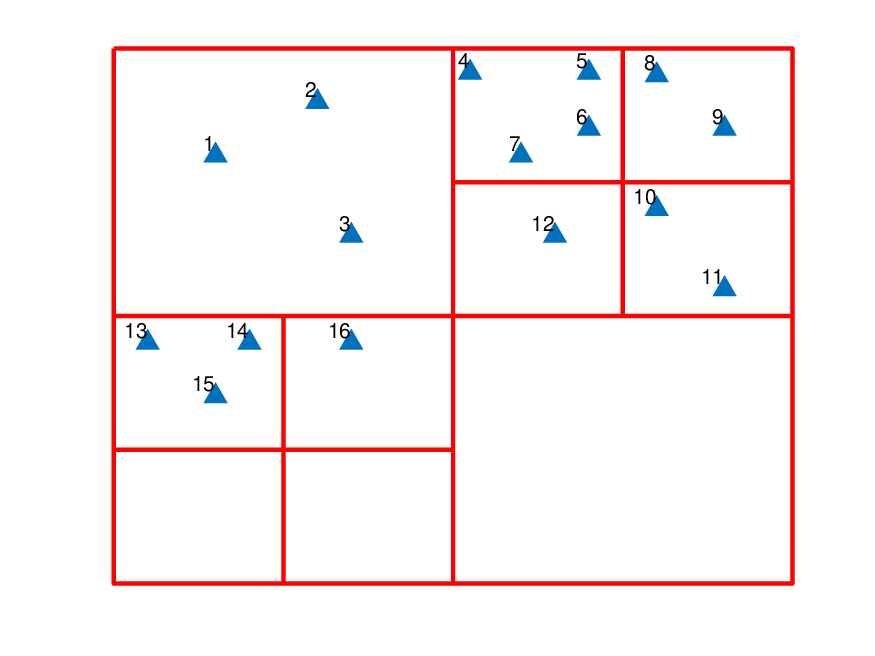}}\hspace{-0.65cm}
\subfigure[] {\includegraphics[width=0.4\textwidth]{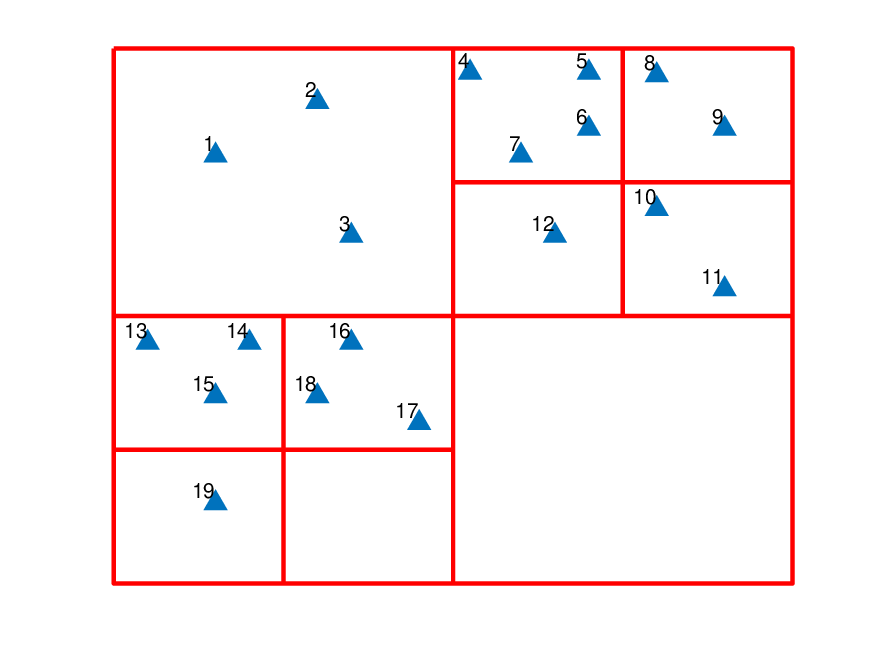}}}
	\caption{Coarse grid subdivision and vertex distribution corresponding to Step 2.4.}
	\label{coarse_grid4}
\end{figure}
\begin{figure}[!ht]
	\centerline{{\includegraphics[width=0.95\textwidth]{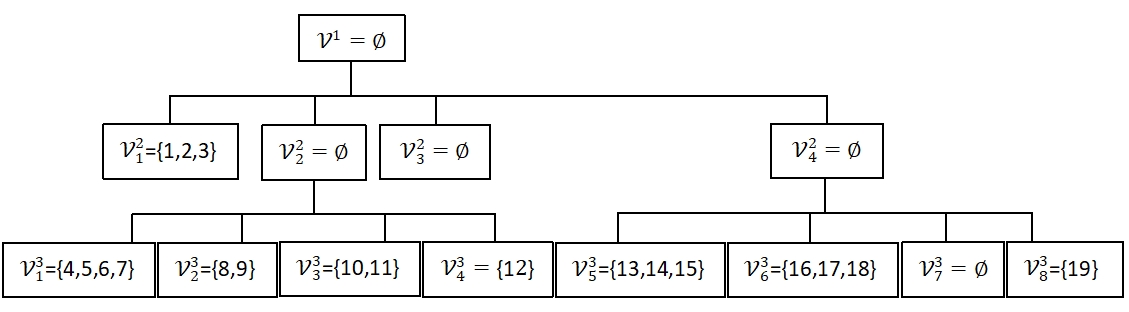}}}
	\caption{The auxiliary region-tree corresponding to Step 2.4.}
	\label{region-tree4}
\end{figure}
\end{itemize}

\item[Step 3.] 
Eliminate the leaf regions(i.e. $\mathrm{isleaf}(\mathcal{D}^i_j)=1$) that do not contain vertexes(i.e. $\mathcal{V}^i_j=\emptyset$), to obtain the final coarse grid subdivision $\mathcal{T}_H=\cup_{\mathcal{V}^i_j\neq\emptyset \& \mathrm{isleaf}(\mathcal{D}^i_j)=1}\mathcal{D}^i_j$ with all vertexes stored in 
$\mathcal{V}^c=\cup_{\mathcal{D}^i_j\in\mathcal{T}_H}\left\{(\mathcal{L}^i_j,\mathcal{U}^i_j),(\mathcal{R}^i_j,\mathcal{U}^i_j),(\mathcal{R}^i_j,\mathcal{B}^i_j),(\mathcal{L}^i_j,\mathcal{B}^i_j)\right\}$(There are $N^c=17$ coarse grid vertices, which are also numbered clockwise), see \figurename \ref{coarse_grid_1}. 
The corresponding auxiliary region-tree is shown in \figurename \ref{region-tree}, whose height is 3.
\begin{figure}[!ht]
	\centerline{{\includegraphics[width=0.4\textwidth]{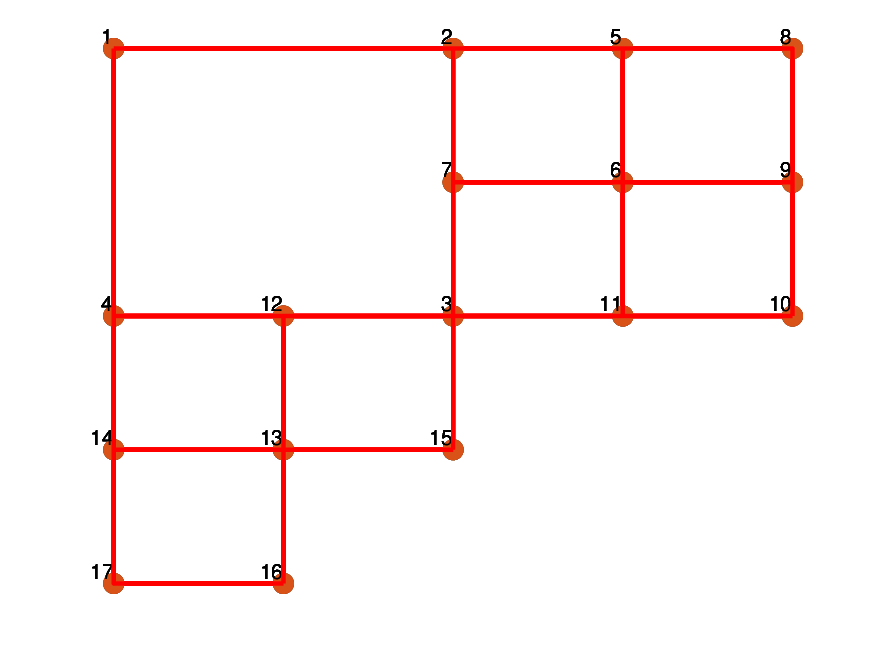}}}
	\caption{The profile of structured coarse grid subdivision $\mathcal{T}_H$.}
	\label{coarse_grid_1}
\end{figure}
\begin{figure}[!ht]
	\centerline{{\includegraphics[width=0.90\textwidth]{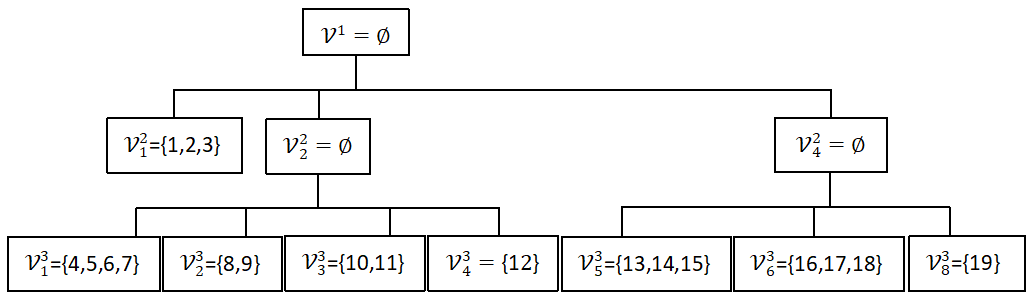}}}
	\caption{The auxiliary region-tree corresponding to structured coarse grid subdivision $\mathcal{T}_H$.}
	\label{region-tree}
\end{figure}

\item[Step 4.] 
Find the correspondence between the fine and coarse grid vertexes. 
Using the auxiliary coarse grid as the medium, the fine grid vertexes $\mathcal{V}^i_j$ located in the coarse grid region $\mathcal{D}^i_j$ are associated with its four vertices $\left\{(\mathcal{L}^i_j,\mathcal{U}^i_j),(\mathcal{R}^i_j,\mathcal{U}^i_j),(\mathcal{R}^i_j,\mathcal{B}^i_j),(\mathcal{L}^i_j,\mathcal{B}^i_j)\right\}$, see \figurename \ref{relationship}.
Specifically, it is as follows
\begin{align}
\{\textcircled{1},\textcircled{2},\textcircled{3}\}
&\rightarrow\mathcal{D}^2_1\rightarrow
\{\textcircled{1},\textcircled{2},\textcircled{3},\textcircled{4}\}, \\
\{\textcircled{4},\textcircled{5},\textcircled{6},\textcircled{7}\}
&\rightarrow\mathcal{D}^3_1\rightarrow
\{\textcircled{2},\textcircled{5},\textcircled{6},\textcircled{7}\}, \\
\{\textcircled{8},\textcircled{9}\}
&\rightarrow\mathcal{D}^3_2\rightarrow
\{\textcircled{5},\textcircled{8},\textcircled{9},\textcircled{6}\}, \\
\{\textcircled{10},\textcircled{11}\}
&\rightarrow\mathcal{D}^3_3\rightarrow
\{\textcircled{6},\textcircled{9},\textcircled{10},\textcircled{11}\}, \\
\{\textcircled{12}\}
&\rightarrow\mathcal{D}^3_4\rightarrow
\{\textcircled{7},\textcircled{6},\textcircled{11},\textcircled{3}\}, \\
\{\textcircled{13},\textcircled{14},\textcircled{15}\}
&\rightarrow\mathcal{D}^3_5\rightarrow
\{\textcircled{4},\textcircled{12},\textcircled{13},\textcircled{14}\}, \\
\{\textcircled{16},\textcircled{17},\textcircled{18}\}
&\rightarrow\mathcal{D}^3_6\rightarrow
\{\textcircled{12},\textcircled{3},\textcircled{15},\textcircled{13}\}, \\
\{\textcircled{19}\}
&\rightarrow\mathcal{D}^3_8\rightarrow
\{\textcircled{14},\textcircled{13},\textcircled{16},\textcircled{17}\}.
\end{align}
It is easy to find that the relationship between fine and coarse grid vertexes is many-to-many, which makes them more closely related to each other.
\begin{figure}[!ht]
	\centerline{{\includegraphics[width=0.4\textwidth]{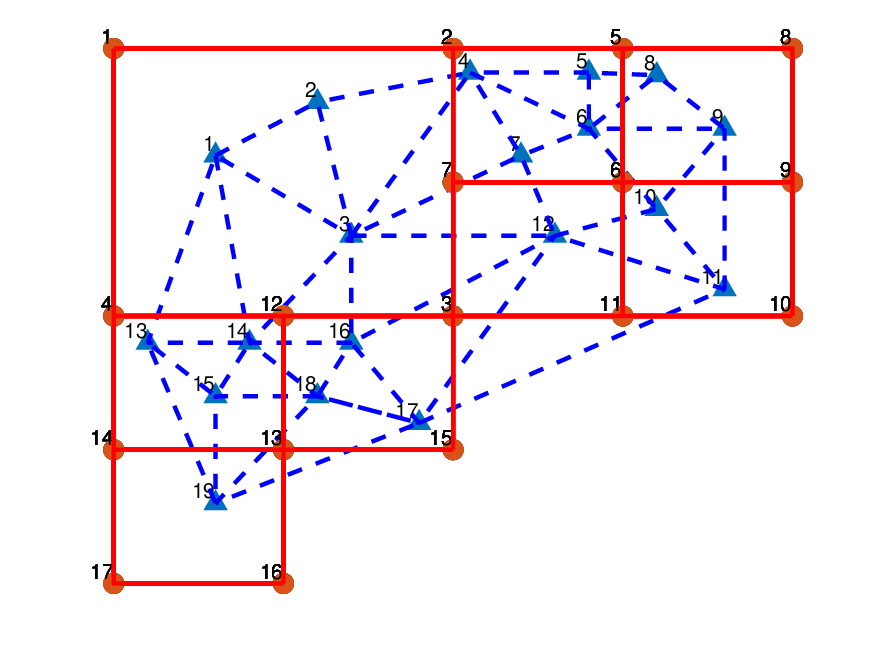}}}
	\caption{The profile of correspondence between fine and coarse grid vertexes.}
	\label{relationship}
\end{figure}
\end{itemize}
\begin{theorem}
Assume that the maximum distance $d_{max}$ and minimum distance $d_{min}$ between the vertexes of the given initial fine grid are satisfied
\begin{align}\label{hypothesis}
\frac{d_{max}}{d_{min}}=N^q,
\end{align}
where $N$ represents the number of fine grid vertexes, q is a small number, such as q=2, and
\begin{align}
d_{min} = \min_{p_i^f,p_j^f\in \mathcal{V}^f} \Vert p_i^f-p_j^f \Vert_{L_2},\\
d_{max} = \max_{p_i^f,p_j^f\in \mathcal{V}^f} \Vert p_i^f-p_j^f \Vert_{L_2}.
\end{align}
$d_{max}$ is also called the diameter of the region $\Omega$, denoted by $\mathrm{diam}(\Omega)=d_{max}$.
Based on the above assumption, the computational complexity of building such a region-tree is $\mathcal{O}\left(q N\log_2 N\right)$.
\end{theorem}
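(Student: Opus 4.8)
The plan is to decompose the construction cost into three contributions and reduce everything to a single estimate on the height $D$ of the auxiliary region-tree: (a) the $N$ vertex insertions of Step 2; (b) the pruning of empty leaves in Step 3; and (c) the fine--coarse association of Step 4 (Step 1, computing the enclosing square, is a single $\mathcal{O}(N)$ scan of $\mathcal{V}^f$ and is dominated). So I would first bound $D$. Index the region-tree by levels, the root $\mathcal{D}^1$ sitting at level $1$; a node at level $\ell$ is a square of side $s_\ell = s_1/2^{\ell-1}$, where $s_1 = \mathcal{R}^1 - \mathcal{L}^1$ is the side of the enclosing square, and, assuming — as holds for a mesh given on a domain of normalized position and scale — that $s_1 = \mathcal{O}(\mathrm{diam}(\Omega)) = \mathcal{O}(d_{max})$. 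A node is subdivided only if its vertex set exceeds the threshold $4$, in particular only if it contains at least two distinct vertices of $\mathcal{V}^f$; two such vertices lie in one square of side $s_\ell$, so their distance is at most $s_\ell\sqrt{2}$ and at least $d_{min}$, whence $s_\ell \ge d_{min}/\sqrt{2}$. Plugging $s_\ell = s_1/2^{\ell-1}$ and $s_1 = \mathcal{O}(d_{max})$ into $s_\ell \ge d_{min}/\sqrt{2}$ gives that any subdivided node sits at a level $\ell = \mathcal{O}(\log_2(d_{max}/d_{min}))$, so by the hypothesis $d_{max}/d_{min} = N^q$,
\begin{equation}
D = \mathcal{O}\!\left(\log_2\frac{d_{max}}{d_{min}}\right) = \mathcal{O}(q\log_2 N).
\end{equation}

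Next I would estimate the cost of inserting one vertex $p_i^f$. We descend from the root, at each visited node deciding in $\mathcal{O}(1)$ time which of its four children contains $p_i^f$, until we reach a leaf; this costs $\mathcal{O}(D)$. If that leaf then holds $5$ vertices, we unmark its leaf flag, create its four children (each described by $\mathcal{O}(1)$ data) and redistribute the $\le 5$ vertices among them, which is $\mathcal{O}(1)$ work; this split may cascade if all $5$ vertices fall into one child, but a square containing $5$ (pairwise $d_{min}$-separated) vertices cannot have side below $d_{min}/\sqrt{2}$, so the cascade stops after at most $D$ splits, each $\mathcal{O}(1)$. Hence one insertion costs $\mathcal{O}(D) = \mathcal{O}(q\log_2 N)$, and the $N$ insertions of Step 2 cost $\mathcal{O}(qN\log_2 N)$ altogether.

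Finally I would bound the number of nodes of the tree, which governs Steps 3 and 4. The $N$ vertices of $\mathcal{V}^f$ end up one region per vertex, so there are at most $N$ nonempty leaves. Every internal node held $\ge 5$ vertices when it was split and therefore has a nonempty leaf among its descendants, so it lies on one of the $\le N$ root-to-(nonempty-leaf) paths, each of length $\le D$; thus there are $\mathcal{O}(ND)$ internal nodes, and since every internal node has exactly four children the total number of nodes is at most $4\cdot(\text{number of internal nodes})+1 = \mathcal{O}(ND)$. Consequently Step 3 (one traversal deleting leaf regions with empty vertex set) costs $\mathcal{O}(ND) = \mathcal{O}(qN\log_2 N)$, while Step 4 costs $\mathcal{O}(1)$ per nonempty leaf — associating its $\le 4$ vertices with its $4$ corners — i.e.\ $\mathcal{O}(N)$ overall. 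Adding the contributions of Steps 1--4 yields the claimed complexity $\mathcal{O}(qN\log_2 N)$.

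The main obstacle is the height estimate: one has to turn the purely combinatorial halving of side lengths into a bound involving the data-dependent quantities $d_{min}$ and $d_{max}$, and in particular to justify that the enclosing square $\mathcal{D}^1$ has side $s_1 = \mathcal{O}(d_{max})$ — which, for the enclosing-square formula used in Step 1, requires a mild normalization of the position (and scale) of $\Omega$; without it $s_1$, and hence $D$, can be inflated by how far the point cloud sits from the origin. Once $D = \mathcal{O}(q\log_2 N)$ is in hand, the cascade analysis of a single insertion and the count of tree nodes are routine, as is the observation that every per-node operation (child selection, redistribution of $\le 5$ vertices, corner association) is $\mathcal{O}(1)$.
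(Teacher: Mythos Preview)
Your argument is correct and follows the same core approach as the paper: bound the tree height by $\mathcal{O}(q\log_2 N)$ via the inequality $d_{min} \lesssim 2^{-\ell}\,\mathrm{diam}(\mathcal{D}^1) \lesssim 2^{-\ell} d_{max}$, then multiply by the $\mathcal{O}(N)$ leaves. Your version is more granular than the paper's --- you separately cost out the descent, the cascaded splits, and the traversal for Steps~3--4, whereas the paper simply multiplies depth by number of leaves --- and your flag about needing $s_1 = \mathcal{O}(d_{max})$ is well taken: the paper asserts $\mathrm{diam}(\mathcal{D}^1)\le \sqrt{2}\,d_{max}$ without comment, which for its specific enclosing-square formula in Step~1 does require the mild positional normalization you identify.
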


\begin{proof} 
According to the definition of the minimum distance $d_{min}$, it is easy to find that the distance between any two fine grid vertexes $p_i^f,p_j^f\in \mathcal{V}^f$ is satisfied
\begin{align}
\Vert p_i^f-p_j^f \Vert_{L_2} \geq d_{min}.
\end{align}
And then it can be obtained that the diameter of each leaf region $\mathcal{D}^i_j$ satisfies
\begin{align}
\mathrm{diam}(\mathcal{D}^i_j) \geq d_{min}.
\end{align}
According to the construction process of the region-tree, it can be further concluded that there is a relationship between each leaf region $\mathcal{D}^i_j$ and root region $\mathcal{D}^1$
\begin{align}
\mathrm{diam}(\mathcal{D}^i_j) = 2^{-(i-1)}\mathrm{diam}(\mathcal{D}^1).
\end{align}
Based on the definition of the root region $\mathcal{D}^1$, it is not difficult to get the relationship between its diameter and the maximum distance $d_{max}$ 
\begin{align}
\mathrm{diam}(\mathcal{D}^1) \leq \sqrt{2}d_{max}.
\end{align}
Therefore, it can be deduced
\begin{align}
d_{min} \leq \mathrm{diam}(\mathcal{D}^i_j) = 2^{-(i-1)}\mathrm{diam}(\mathcal{D}^1) \leq 2^{-(i-1)}\sqrt{2}d_{max}=2^{\frac{3}{2}-i}d_{max}.
\end{align}
Combined with the hypothesis \eqref{hypothesis}, it can be concluded that the depth $i$ of region $\mathcal{D}^i_j$ is satisfied
\begin{align}
i \leq \log_2\left(\frac{d_{max}}{d_{min}}\right)+\frac{3}{2}=\log_2\left(N^q\right)+\frac{3}{2}=q\log_2 N+\frac{3}{2} \approx \mathcal{O}\left(q\log_2 N\right).
\end{align}
According to the previous partitioning method, each leaf region $\mathcal{D}^i_j$ contains at least one fine grid vertex, so there are at most $N$ leaf regions in the auxiliary region-tree. 
Thus, the total computational complexity is $\mathcal{O}\left(q N\log_2 N\right)$.
\end{proof}

\subsubsection{The relationship between the coarse grids}
Previously, the auxiliary coarse grid was obtained by gradually subdividing the square region $\mathcal{D}^1$ from coarse to fine.
In this section, in order to get the coarse grids after the second layer, we merge the regions from finer to coarser little by little until the square region $\mathcal{D}^1$ comes along.
This is done as follows:
\begin{itemize}
\item[Step 1.] Merge $\mathcal{D}^3_j, j=1,2,3,4$ to obtain $\mathcal{D}^2_2$, merge $\mathcal{D}^3_j, j=5,6,8$ to obtain $\mathcal{D}^2_4$.
The following is the corresponding relationship between the first coarse grid(cf.\figurename \ref{coarse_grid_123}(a)) and the second coarse grid(cf.\figurename \ref{coarse_grid_123}(b)).
\begin{align}
\{\textcircled{1},...,\textcircled{4}\}
&\rightarrow\mathcal{D}^2_1\rightarrow
\{\textcircled{1},...,\textcircled{4}\}, \\
\{\textcircled{2},\textcircled{5},...,\textcircled{11},\textcircled{3}\}
&\rightarrow\mathcal{D}^2_2\rightarrow
\{\textcircled{2},\textcircled{5},\textcircled{6},\textcircled{3}\}, \\
\{\textcircled{4},\textcircled{12},...,\textcircled{14},\textcircled{3},\textcircled{15},...,\textcircled{17}\}
&\rightarrow\mathcal{D}^2_4\rightarrow
\{\textcircled{4},\textcircled{3},\textcircled{7},\textcircled{8}\}.
\end{align}
\item[Step 2.] Merge $\mathcal{D}^2_j, j=1,2,4$ to obtain $\mathcal{D}^1$.  
The following is the corresponding relationship between the second coarse grid(cf.\figurename \ref{coarse_grid_123}(b)) and the third coarse grid(cf.\figurename \ref{coarse_grid_123}(c)).
\begin{align}
\{\textcircled{1},...,\textcircled{8}\}
&\rightarrow\mathcal{D}^1\rightarrow
\{\textcircled{1},...,\textcircled{4}\}.
\end{align}
\end{itemize}
Similarly, the relationship between the coarse grid vertices of each layer is many-to-many.
\begin{figure}[!ht]
	\centerline{
\subfigure[The first layer.] {\includegraphics[width=0.4\textwidth]{img/coarse_grid_1.eps}}\hspace{-0.60cm}
\subfigure[The second layer.] {\includegraphics[width=0.4\textwidth]{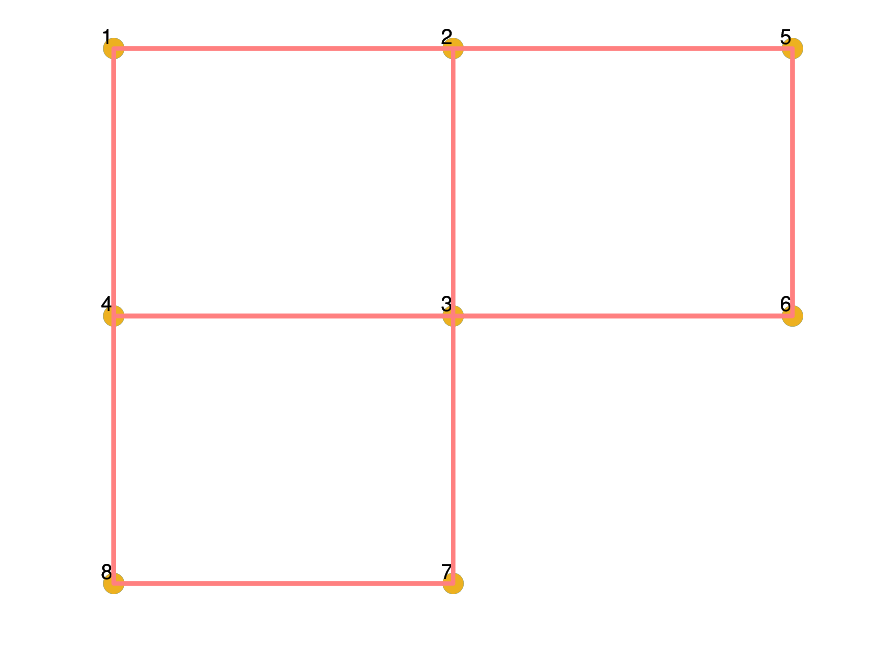}}\hspace{-0.60cm}
\subfigure[The third layer.] {\includegraphics[width=0.4\textwidth]{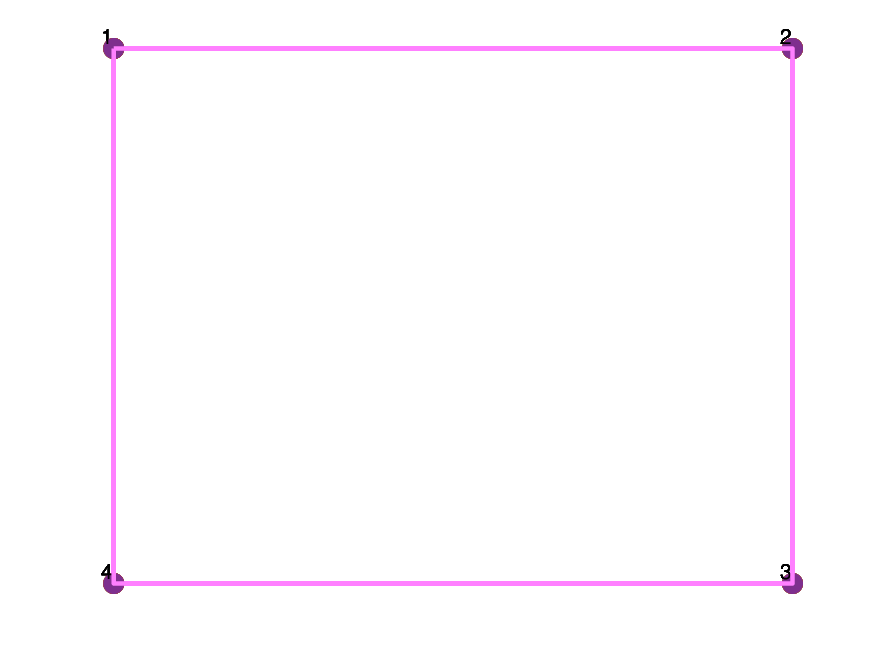}}}
	\caption{The profile of structured coarse grids of each layer.}
	\label{coarse_grid_123}
\end{figure}

\subsection{Prolongation/restriction operator}\label{section:PR}
In the previous section, we found the relationship between the grid vertices of  each layer. 
On this basis, the prolongation/restriction operator is constructed by using the bilinear interpolation function, and then the stiffness matrix corresponding to each layer is obtained.

Taking the prolongation operator $\mathcal{P}$ between the fine grid and the coarse grid of the first layer as an example, the element in column $l(l\leq N^c)$ of row $k(k\leq N^f=N)$ represents the interpolation weight of the fine grid vertex $p_k^f=(x_k^f,y_k^f)\in\mathcal{V}^f$ with respect to the coarse grid vertex $p_l^c=(x_l^c,y_l^c)\in\mathcal{V}^c$(Assume that fine grid vertex $p_k^f$ is located in the coarse grid region $\mathcal{D}^i_j$, and coarse grid vertex $p_l^c$ is a vertex of the coarse grid region $\mathcal{D}^i_j$), which corresponds to the first degree of freedom $u_1$ of the variable $u$ and can be expressed as
\begin{align}\label{wkl}
w_{kl}=w_{kl}^x\times w_{kl}^y,
\end{align} 
where $w_{kl}^x$ and $w_{kl}^y$ are the interpolation weights of fine grid vertex $p_k^f$ to the coarse grid vertex $p_l^c$ with respect to $x$ direction and $y$ direction, respectively
\begin{align}\label{wkl_xy}
w_{kl}^x = \left\{
\begin{array}{cc}
\frac{x_l^c-x_k^f}{\mathcal{R}^i_j-\mathcal{L}^i_j}, & x_k^f\leq x_l^c,\\
\frac{x_k^f-x_l^c}{\mathcal{R}^i_j-\mathcal{L}^i_j}, & x_k^f\geq x_l^c,\\
0,&\mbox{otherwise},\\
\end{array}
\right.
w_{kl}^y = \left\{
\begin{array}{cc}
\frac{y_l^c-y_k^f}{\mathcal{U}^i_j-\mathcal{B}^i_j}, & y_k^f\leq y_l^c,\\
\frac{y_k^f-y_l^c}{\mathcal{U}^i_j-\mathcal{B}^i_j}, & y_k^f\geq y_l^c,\\
0,&\mbox{otherwise}.\\
\end{array}
\right.
\end{align}
While the element in column $l(l>N^c)$ of row $k(k>N^f=N)$ denotes the interpolation weight of the fine grid vertex $p_{k-N^f}^f\in\mathcal{V}^f$ with respect to the coarse grid vertex $p_{l-N^c}^c\in\mathcal{V}^c$, except that it corresponds to the second degree of freedom $u_2$ of the variable $u$ and its expression is
\begin{align}\label{wkl_}
w_{kl}=w_{k-N^f,l-N^c}.
\end{align} 
The rest of the elements are all zero.
The prolongation/restriction operators between the coarse grids after the second layer are constructed in a similar way. 
Substituting the above construction algorithm of prolongation/restriction operator into the first step of Algorithm \ref{mg} yields our V-ASMG method.
\begin{theorem}
By adjusting the maximum number of fine grid vertexes that can be contained in each leaf region $\mathcal{D}^i_j$ to be 9, it is proved that the V-ASMG method can degenerate to the classical GMG method in the case of uniform structure grid.
\end{theorem}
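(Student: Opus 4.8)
The plan is to specialise the region-tree construction of Section~\ref{section:PR} to a uniform tensor-product grid, show that raising the leaf threshold from $4$ to $9$ forces the auxiliary tree to be the complete quadtree whose leaves are exactly the $3\times 3$ vertex patches, and then verify that every ingredient of the recursive version of Algorithm~\ref{mg} --- the grid hierarchy, the prolongation $\mathcal{P}$, the restriction $\mathcal{R}$, the coarse operator $A^c$ and the smoother --- coincides with its classical geometric multigrid counterpart. After rescaling we may take the uniform grid on $[0,1]^2$ with mesh $h=2^{-m}$ and $m\ge 1$ (the case $m=0$ being trivial), so $N=(2^m+1)^2$; since the extreme coordinates are $0$ and $1$ in both directions, Step~1 of the construction returns $\mathcal{D}^1=[0,1]^2$, a dyadic square aligned with the grid, and therefore every region produced by repeated halving is a dyadic sub-square whose corners lie at integer multiples of its side length.

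The core step is to identify the leaves. Because a vertex lying on a shared edge of two sibling regions is inserted into every region containing it, the set of fine vertices that eventually sits in a given dyadic square is determined by the square alone and not by the insertion order; hence the rule ``subdivide a region iff it holds more than $9$ vertices'' fixes the final tree unambiguously. An induction on the depth $i$ shows that a depth-$i$ region has side $2^{-(i-1)}$ and contains exactly $(2^{m-i+1}+1)^2$ fine vertices. For $i\le m-1$ this is at least $5^2=25>9$, so the region is subdivided; for $i=m$ it equals $3^2=9$, so the region is a leaf. (The arithmetic heart of the matter is that the admissible patch sizes are of the form $2^k+1$, so $3^2=9$ meets the threshold exactly while the next larger patch already carries $25$ vertices.) Thus the auxiliary tree is the complete quadtree of depth $m$, each leaf carries a $3\times3$ patch, no leaf is empty so Step~3 removes nothing, and the leaf corners are precisely the grid points whose coordinates are multiples of $2h$ --- i.e. the uniform grid of mesh $2h$, which is exactly the classical GMG coarse grid. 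Running the merging procedure level by level then yields the uniform grids of mesh $4h,\dots,2^{m}h=1$, i.e. the classical GMG hierarchy with $m+1$ levels.

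It remains to match the operators. Let $p_k^f$ be a fine vertex in a leaf $\mathcal{D}^m_j$; it occupies one of the nine positions of the $3\times3$ patch and the four coarse vertices associated with it are the corners of $\mathcal{D}^m_j$. Substituting these nine positions into \eqref{wkl}--\eqref{wkl_xy} yields weight $1$ at itself when $p_k^f$ is a corner (injection), weight $\tfrac12$ at each endpoint of the relevant coarse edge when $p_k^f$ is an edge midpoint, and weight $\tfrac14$ at each corner when $p_k^f$ is the centre; these are exactly the nonzero entries of the classical bilinear ($9$-point) prolongation, and a fine vertex shared by two leaves receives the same weights from either leaf, so $\mathcal{P}$ is well defined and equals the classical interpolation operator, while \eqref{wkl_} reproduces the component-wise block-diagonal prolongation used in GMG for the vector elasticity system. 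Consequently $\mathcal{R}=\mathcal{P}^\top$ is the classical full-weighting restriction, $A^c=\mathcal{R}A\mathcal{P}$ from \eqref{Ac} is the classical (Galerkin) coarse-grid operator, and since the pre- and post-smoothing (GS) and the cycle type are untouched, the recursive application of Algorithm~\ref{mg} is, step for step, the classical geometric multigrid iteration; this proves the claim. (The three-dimensional statement is identical with the threshold $9$ replaced by $3^3=27$.)

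The main obstacle is the leaf identification: one must justify carefully that the vertex insertion order is irrelevant and that the overlapping assignment of vertices on shared region boundaries is self-consistent, so that the leaves are forced to be the $3\times3$ patches however the tree was grown, and one should check the small cases $m\le 1$ --- where the whole grid already fits inside a single leaf --- directly. Once the tree is pinned down, equating the weights from \eqref{wkl}--\eqref{wkl_xy} with a chosen textbook definition of classical GMG is a routine bookkeeping check.
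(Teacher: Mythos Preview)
Your proposal is correct and considerably more thorough than the paper's own argument. The paper does not give a general proof at all: it simply exhibits the single $4\times4$ example, draws the resulting $2\times2$ coarse grid, displays in a figure the bilinear weights $(1,\tfrac12,\tfrac14)$ around the central coarse vertex, observes that these coincide with the classical GMG stencil, and writes ``Certified''. Your version instead treats the full dyadic family $h=2^{-m}$, pins down the leaves by the counting argument $(2^{m-i+1}+1)^2$, checks the consistency of the weights at shared corners/edges/centres, and then propagates the identification through $\mathcal R=\mathcal P^\top$ and the Galerkin coarse operator. What the paper's approach buys is brevity and a picture; what yours buys is an actual proof valid for every level of the hierarchy (and the natural $3^3=27$ three-dimensional analogue). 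The only point worth tightening is the one you already flag: the paper's insertion rule does not spell out how boundary vertices are distributed among overlapping closed sub-squares, but as you note the leaf identification at depth~$m$ survives either convention, since a depth-$(m-1)$ box carries at least $16>9$ vertices and a depth-$m$ box at most $9$.
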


\begin{proof} 
Take the $4\times4$ structured fine grid in \figurename \ref{structured_fine_coarse_grid}(a) as an example.
According to the coarse grid subdivision way of the V-ASMG method, its corresponding structured coarse grid can be obtained as shown in \figurename \ref{structured_fine_coarse_grid}(b).
Considering the most central coarse grid vertex in \figurename \ref{structured_fine_coarse_grid}(b), which is related to all fine grid vertices in \figurename \ref{structured_fine_coarse_grid}(a), and their corresponding weights are shown in in \figurename \ref{structured_fine_coarse_grid}(c)(All unmarked weights are 0).
Other coarse grid vertexes satisfy the same relationship.
It is not difficult to find that whether the relationship between the fine and coarse grid vertexes or the corresponding weights are consistent with the classical GMG method.
Certified.
\begin{figure}[!ht]
	\centerline{
\subfigure[Structured fine grid.] {\includegraphics[width=0.4\textwidth]{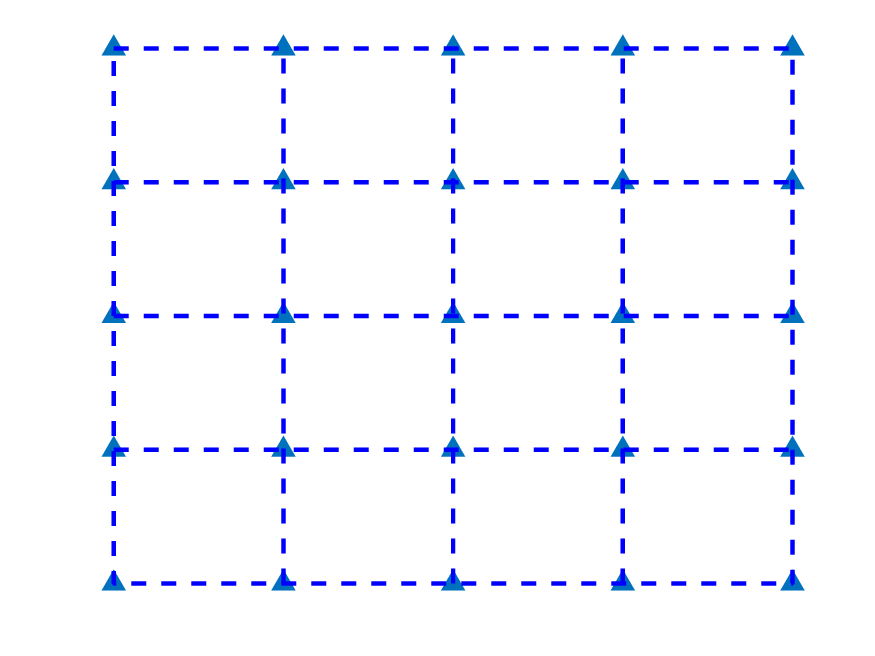}}\hspace{-0.60cm}
\subfigure[Structured coarse grid.] {\includegraphics[width=0.4\textwidth]{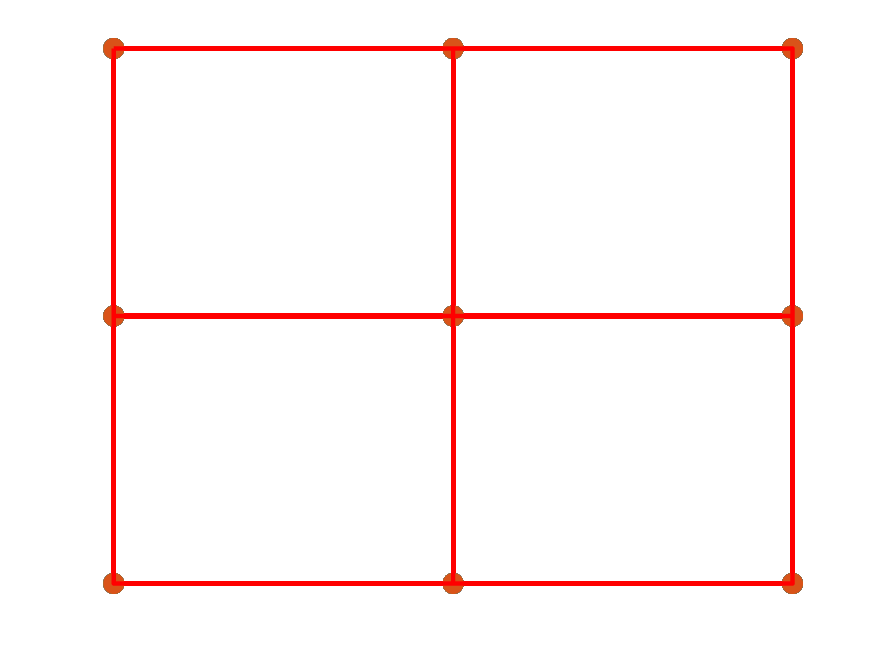}}\hspace{-0.60cm}
\subfigure[Weights.] {\includegraphics[width=0.4\textwidth]{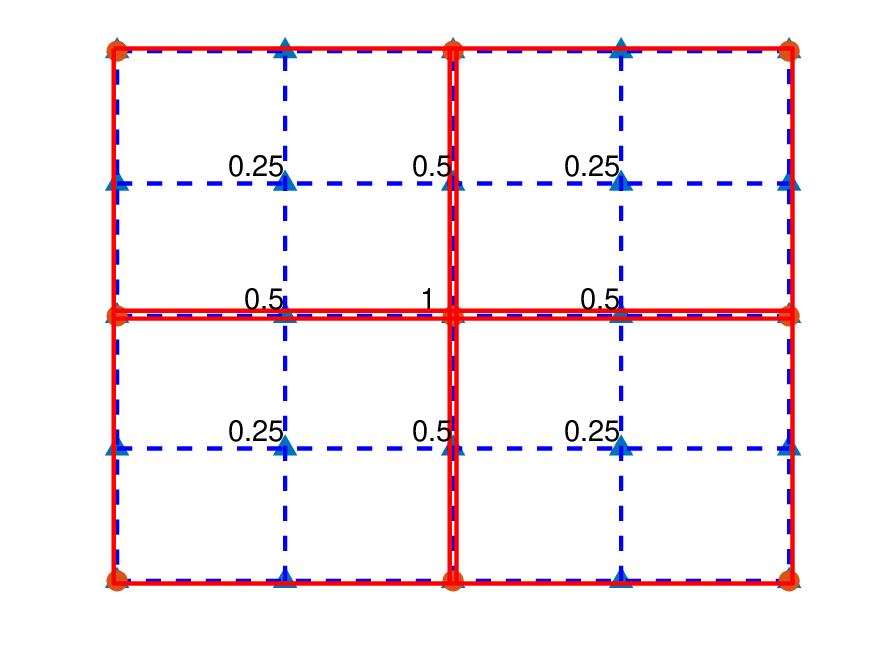}}}
	\caption{The profile of structured fine and coarse grids and their weights.}
	\label{structured_fine_coarse_grid}
\end{figure}
\end{proof}

\section{V-ASMG preconditioner}
In this section, some theories of the V-ASMG method as a   preconditioner are given, and it is proved that the V-ASMG method can indeed accelerate the iterative convergence speed by reducing the matrix condition number.

Let $\mathcal{T}_0=\cup_{\mathrm{isleaf}(\mathcal{D}^i_j)=1}\mathcal{D}^i_j$ be the union of all leaf regions $\mathcal{D}_j^i$, including those that do not contain fine grid vertexes. 
For the convenience of representation, these leaf regions are briefly denoted as $\tau_0$.
Let $\Omega_0$ denote the domain determined by $\mathcal{T}_0$, namely, $\bar{\Omega}_0=\cup_{\tau_0\in \mathcal{T}_0}\bar{\tau}_0$. 
According to the establishment process of the previous section, it is not difficult to obtain that $\Omega\subset\Omega_0$ and $h_0:=\max_{\tau_0\in \mathcal{T}_0}\mathrm{diam}(\tau_0)\approx h:=\max_{\tau\in \mathcal{T}_h}\mathrm{diam}(\tau)$.
The $H^{1}$ piecewisely bilinear finite element space is used to be the auxiliary space $V_0$ which vanishes on $\Omega_0/\Omega$.
The main technical difficulty lies in how to establish the connection between the piecewisely linear finite element space $V_{h}$ which is regarded as the fine space and the piecewisely bilinear finite element space $V_0$ that is seen as the coarse space.

Define two linear operators $A_h: V_{h} \rightarrow V_{h}$ and $A_0: V_0 \rightarrow V_0$, which are associated with $V_{h}$ and $V_0$, respectively,  as follows
\begin{align}
 (A_h u, v)=a(u, v)&, \text { for any } v \in V_{h};\\
 (A_0 u, w)=a(u, w)&, \text { for any } w \in V_0. 
\end{align}
Also denote by $\|\cdot\|$ the norm induced by $(\cdot,\cdot)$.
As is known to all, the spectral radius and the condition number of operators $A_h$ and $A_0$ are $O\left(h^{-2}\right)$ and $O\left(h_0^{-2}\right)$ (see \cite{b1993}).
Since $A_0$ is defined on the piecewisely bilinear finite element space $V_0$, the coarse problem defined on which can then be solved by many efficient ready-made solvers. 
Denote $B_0: V_0 \rightarrow V_0$ to be such a coarse solver, which is chosen to be a direct solver with  $B_0=A_0^{-1}$ in this paper. 
In addition, on the fine space $V_{h}$, we still need a symmetric and positive definite smoother $S_h: V_{h} \rightarrow V_{h}$, such as Jacobi or symmetric Gauss-Seidel method. 
At last, in order to connect the coarse space with the fine space, a prolongation operator $\Pi: V_0 \rightarrow V_{h}$ and a restriction operator $\Pi^\top: V_{h} \rightarrow V_0$ are required satisfying
\begin{equation}
(v, \Pi w)=\left(\Pi^\top v, w\right), \text { for any } v \in V_{h} \text { and } w \in V_0. 
\end{equation}
Then, following the definition in \cite{b1993, x1996}, the V-ASMG preconditioner $B_h: V_{h} \rightarrow V_{h}$ is defined as
\begin{equation}
B_h=S_h+\Pi B_0 \Pi^{\top}.
\end{equation}
As demonstrated in \cite{x1996}, the following theorem is valid.
\begin{theorem}\label{thm_mg}
Assume that for all $v \in V_{h}$ and $w \in V_0$,
\begin{align}
(A_0 w, w) \lesssim (B_0 A_0 w&, A_0 w) \lesssim (A_0 w, w) ,\label{BA}\\
\rho_{A_h}^{-1}(v, v) \lesssim(S_h v&, v) \lesssim \rho_{A_h}^{-1}(v, v), \label{R_rho}\\
\|\Pi w\|_{A_h} &\lesssim\|w\|_{A_0},\label{Pi}
\end{align}
moreover, there exists a linear operator $P: V_{h} \rightarrow V_0$ such that
\begin{align}
\|P v\|_{A_0} &\lesssim\|v\|_{A_h}, \label{P}\\
\|v-\Pi P v\|_{0,\Omega}^{2} &\lesssim \rho_{A_h}^{-1}\|v\|_{A_h}^{2},\label{err_P}
\end{align}
where $\rho_{A_h}=\rho(A_h)$ is the spectral radius of $A_h$.
Then the condition number of $B_h A_h$ satisfies $\kappa(B_h A_h) \lesssim O(1)$.
\end{theorem}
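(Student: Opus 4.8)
The plan is to follow the abstract auxiliary-space framework of \cite{x1996, b1993}. Since $A_h$ and $B_h$ are symmetric and positive definite, $B_h A_h$ is self-adjoint and positive in the $A_h$-inner product, and its extreme eigenvalues are
\begin{align}
\lambda_{\max}(B_h A_h)=\sup_{0\neq w\in V_h}\frac{(B_h w,w)}{(A_h^{-1}w,w)},\qquad
\lambda_{\min}(B_h A_h)=\Big(\sup_{0\neq w\in V_h}\frac{(B_h^{-1}w,w)}{(A_h w,w)}\Big)^{-1}.
\nonumber
\end{align}
Hence it suffices to prove the two one-sided estimates $(B_h w,w)\lesssim(A_h^{-1}w,w)$ and $(B_h^{-1}w,w)\lesssim(A_h w,w)$ for all $w\in V_h$; combining them gives $\kappa(B_h A_h)=\lambda_{\max}/\lambda_{\min}\lesssim O(1)$.

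For the first estimate (the continuity bound controlling $\lambda_{\max}$) I would split $(B_h w,w)=(S_h w,w)+(B_0\Pi^{\top} w,\Pi^{\top} w)$. The smoother term satisfies $(S_h w,w)\lesssim\rho_{A_h}^{-1}(w,w)\le(A_h^{-1}w,w)$ by the upper half of \eqref{R_rho} together with $A_h^{-1}\succeq\rho_{A_h}^{-1}I$. For the coarse term, the upper half of \eqref{BA} reads $(B_0 A_0 v,A_0 v)\lesssim(A_0 v,v)$, so choosing $v=A_0^{-1}\Pi^{\top} w$ gives $(B_0\Pi^{\top} w,\Pi^{\top} w)\lesssim(A_0^{-1}\Pi^{\top} w,\Pi^{\top} w)$. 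Writing $\tilde w=A_0^{-1}\Pi^{\top} w$, the adjoint relation and Cauchy--Schwarz in the $(A_h,A_h^{-1})$-pairing yield $(A_0^{-1}\Pi^{\top} w,\Pi^{\top} w)=(\tilde w,\Pi^{\top} w)=(\Pi\tilde w,w)\le\|\Pi\tilde w\|_{A_h}\|w\|_{A_h^{-1}}$, and \eqref{Pi} bounds $\|\Pi\tilde w\|_{A_h}\lesssim\|\tilde w\|_{A_0}=(A_0^{-1}\Pi^{\top} w,\Pi^{\top} w)^{1/2}$; cancelling one factor leaves $(B_0\Pi^{\top} w,\Pi^{\top} w)\lesssim(A_h^{-1}w,w)$. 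Summing the two contributions gives $(B_h w,w)\lesssim(A_h^{-1}w,w)$.

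For the second estimate (the stable-decomposition bound controlling $\lambda_{\min}$) the key device is the additive-Schwarz representation of the inverse preconditioner, obtained by identifying $B_h=\Pi_{\mathcal V}\,\mathrm{diag}(S_h,B_0)\,\Pi_{\mathcal V}^{\top}$ with $\Pi_{\mathcal V}(v_h,v_0)=v_h+\Pi v_0$ and invoking the fictitious-space lemma:
\begin{align}
(B_h^{-1}w,w)=\inf\Big\{(S_h^{-1}v_h,v_h)+(B_0^{-1}v_0,v_0)\ :\ w=v_h+\Pi v_0,\ v_h\in V_h,\ v_0\in V_0\Big\}.
\nonumber
\end{align}
I would then test this infimum with the decomposition supplied by the approximation operator, namely $v_0=P w$ and $v_h=w-\Pi P w$. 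The lower half of \eqref{BA} is equivalent to $B_0^{-1}\lesssim A_0$ in the quadratic-form sense, so $(B_0^{-1}v_0,v_0)\lesssim\|P w\|_{A_0}^{2}\lesssim\|w\|_{A_h}^{2}$ by \eqref{P}. The lower half of \eqref{R_rho} is equivalent to $S_h^{-1}\lesssim\rho_{A_h}I$, so $(S_h^{-1}v_h,v_h)\lesssim\rho_{A_h}\|w-\Pi P w\|_{0,\Omega}^{2}\lesssim\|w\|_{A_h}^{2}$, where the two powers of $\rho_{A_h}$ cancel thanks precisely to \eqref{err_P}. Adding these, $(B_h^{-1}w,w)\lesssim\|w\|_{A_h}^{2}=(A_h w,w)$, which finishes the argument.

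The routine parts are the Cauchy--Schwarz manipulations and the algebraic restatements of \eqref{BA} and \eqref{R_rho}. The step I expect to carry the weight is the lower bound: one must recognize that, because $V_h$ and $V_0$ are non-nested, the only admissible way to split an arbitrary fine-grid function is through the operator $P$, and that the scaling $\rho_{A_h}$ forced by the weak smoother \eqref{R_rho} is exactly absorbed by the approximation order $\rho_{A_h}^{-1}$ in \eqref{err_P}. This interlocking of the ``smoothing property'' \eqref{R_rho} with the ``approximation property'' \eqref{err_P} is the heart of the proof; once it is in place, what remains --- checking that the concrete $S_h$, $B_0=A_0^{-1}$, $\Pi$, and a suitable $P$ built from the region-tree hierarchy of the previous sections actually satisfy \eqref{BA}--\eqref{err_P} with $h_0\approx h$ --- reduces to the standard finite-element estimates for the piecewise (bi)linear spaces on the quasi-uniform meshes $\mathcal T_h$ and $\mathcal T_0$.
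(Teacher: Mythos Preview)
Your proposal is correct and is precisely the abstract auxiliary--space argument the paper invokes: the paper itself does not give a proof of this theorem but simply records it as ``demonstrated in \cite{x1996}'', and the two one--sided estimates you derive --- the continuity bound via \eqref{R_rho}, \eqref{BA}, \eqref{Pi} and Cauchy--Schwarz, and the stable--decomposition bound via the fictitious--space identity tested with $v_0=Pw$, $v_h=w-\Pi Pw$ and closed by \eqref{P}, \eqref{err_P} --- are exactly Xu's proof. Nothing is missing; the subsequent lemmas in the paper are devoted, as you anticipate, to verifying \eqref{BA}--\eqref{err_P} for the concrete choices $B_0=A_0^{-1}$, $\Pi=Q_h$, $P=Q_0$.
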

\begin{remark}
Theorem \ref{thm_mg} states that the condition number of $B_h A_h$ is uniformly bounded, that is to say, $B_h$ is a good preconditioner for $A_h$. 
Thus, $B_h$ can be used as an effective preconditioner of the PCG method for solving the linear equations \eqref{equ}.
\end{remark}
\begin{remark}
It should be noted that the introduction of operator $P$ is only for the convenience of theoretical analysis. 
In practice, only $B_0, S_h$ and $\Pi$ are required. 
Moreover, the matrix representation of the restriction operator $\Pi^{\top}$ is usually taken as the transpose of the matrix representation of the prolongation operator $\Pi$.
\end{remark}

The next step is to construct the V-ASMG preconditioner that satisfies all of the conditions in Theorem \ref{thm_mg}, namely, inequalities \eqref{BA}-\eqref{err_P}. To this end, we first introduce the orthogonal projections $Q_h: H^{1} \rightarrow V_{h}$ and $Q_0: H^{1} \rightarrow V_{0}$: for any $u\in H^1$,
\begin{align}
a\left(Q_h u, v\right)=a(u, v)&, \text { for any } v \in V_{h},\\
a\left(Q_0 u, w\right)=a(u, w)&, \text { for any } w \in V_{0},
\end{align}
then, the following error estimates(see \cite{bs1992}) can be obtained
\begin{align}
\left\|\nabla Q_h u\right\| &\lesssim\|\nabla u\|, \label{Q}\\
\left\|u-Q_h u\right\|+h\left\|\nabla\left(u-Q_h u\right)\right\| &\lesssim h|u|,\label{err_Q} 
\end{align}
and 
\begin{align}
\left\|\nabla Q_0 u\right\| &\lesssim\|\nabla u\|, \label{Q0}\\
\left\|u-Q_0 u\right\|+h_0\left\|\nabla\left(u-Q_0 u\right)\right\| &\lesssim h_0|u|.\label{err_Q0} 
\end{align}
The choices of $B_0$ and $S_h$ naturally satisfy conditions \eqref{BA}-\eqref{R_rho}. 
Moreover, the definition of the operator $\Pi$ is not so difficult, and it can be defined as $\Pi=Q_h$. 
Since $\Pi$ is a bilinear interpolation operator, it can naturally meet the conditions satisfied by $Q_h$. 
The energy-norm stability of $\Pi$ is a direct consequence of \eqref{Q}, as established in the following lemma.
\begin{lemma}
Let $\Pi=Q_h$, then $\Pi$ satisfies \eqref{Pi}, i.e.,
\begin{equation}
\|\Pi w\|_{A_h} \lesssim\|w\|_{A_0}, \text { for all } w \in V_0.
\end{equation}
\end{lemma}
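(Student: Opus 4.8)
The plan is to reduce everything to the definition of $\Pi=Q_h$ as the $a(\cdot,\cdot)$-Galerkin projection onto $V_h$ together with the already-established stability bound \eqref{Q}. First I would unwind the two energy norms: for $v\in V_h$ we have $\|v\|_{A_h}^2=(A_h v,v)=a(v,v)$, and for $w\in V_0$ we have $\|w\|_{A_0}^2=(A_0 w,w)=a(w,w)$. Since the elasticity bilinear form $a$ is bounded and, via Korn's inequality, coercive on the relevant homogeneous-Dirichlet space, both quantities are equivalent, with constants independent of $h$, to $\|\nabla\cdot\|^2$. I would also record that any $w\in V_0$, being a continuous piecewise-bilinear function that vanishes on $\Omega_0\setminus\Omega$, restricts to an element of $H^1_0(\Omega)$, so that $Q_h w\in V_h$ is well defined and $\|\nabla w\|$ is meaningful.

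Given this preparation, the estimate is essentially a one-liner. Applying \eqref{Q} with $u=w$ gives
\begin{align*}
\|\Pi w\|_{A_h}^2 = a(Q_h w,Q_h w) \approx \|\nabla Q_h w\|^2 \lesssim \|\nabla w\|^2 \approx a(w,w) = \|w\|_{A_0}^2,
\end{align*}
which is exactly \eqref{Pi}. Alternatively, one can bypass \eqref{Q} and use only that $Q_h$ is the $a$-orthogonal projection: choosing $v=Q_h w$ in the defining identity $a(Q_h w,v)=a(w,v)$ and applying Cauchy--Schwarz yields $\|\Pi w\|_{A_h}^2=a(Q_h w,Q_h w)=a(w,Q_h w)\le\|w\|_{A_0}\,\|Q_h w\|_{A_h}$, hence $\|\Pi w\|_{A_h}\le\|w\|_{A_0}$. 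I would present the route through \eqref{Q}, since that is the dependence flagged in the text, and mention the direct Galerkin argument as a remark.

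There is no real obstacle in this lemma; the only thing that needs a little care is the norm bookkeeping --- verifying that the $A_h$- and $A_0$-energy norms genuinely coincide with $a(\cdot,\cdot)^{1/2}$ on $V_h$ and $V_0$, that these are equivalent to $\|\nabla\cdot\|^2$ with mesh-independent constants (so that the hidden constant in \eqref{Pi} is uniform in $h$), and that the zero-extension convention for $V_0$ really places the restriction of $w$ in $H^1_0(\Omega)$ so that $Q_h w$ and the manipulations above are legitimate.
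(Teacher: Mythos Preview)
Your proof is correct, and in fact cleaner than the paper's. Because $\Pi=Q_h$ is the $a(\cdot,\cdot)$-orthogonal projection and both $\|\cdot\|_{A_h}$ and $\|\cdot\|_{A_0}$ are just $a(\cdot,\cdot)^{1/2}$ on their respective subspaces, the Galerkin argument $a(Q_hw,Q_hw)=a(w,Q_hw)\le a(w,w)^{1/2}a(Q_hw,Q_hw)^{1/2}$ gives the inequality with constant~$1$; your alternate route through the global stability~\eqref{Q} and the Korn-based equivalence $a(v,v)\approx\|\nabla v\|^2$ is equally valid. Your check that $w\in V_0$ restricts to $H^1_0(\Omega)$ (so that $Q_hw$ and the Cauchy--Schwarz step are legitimate) is the only subtlety, and you handle it.

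The paper takes a different and longer path: it works element by element on $\mathcal{T}_h$, uses an inverse inequality and an $L^\infty$ transfer to the overlapping auxiliary patch $\widetilde\tau_0$ to obtain $|w-Q_hw|_{1,\tau}\lesssim (h/h_0)^{d/2}|w|_{1,\widetilde\tau_0}$, sums to get $|w-\Pi w|_{1,\Omega}\lesssim|w|_{1,\Omega_0}$, and finishes by the triangle inequality. That local argument is the one you would genuinely need if $\Pi$ were a nodal interpolant onto $V_h$ (and the paper elsewhere calls $\Pi$ a ``bilinear interpolation operator''), since then no Galerkin orthogonality is available. For the operator actually declared in the lemma, $\Pi=Q_h$, your approach is the more natural one; the paper's buys robustness to other choices of $\Pi$ at the cost of extra machinery that is unnecessary here.
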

\begin{proof}
The proof can be proceeded in a similar way as in \cite{csz1996,gwx2016,x1996}. Given $\tau\in\mathcal{T}_h$, let $\widetilde{\tau}_0$ be the union of elements in $\mathcal{T}_0$ that intersect with $\tau$. Thus, according to the inverse inequality and \eqref{err_Q}, we have
\begin{align*}
|w-\Pi w|_{1,\tau}=|w-Q_h w|_{1,\tau}\lesssim& |\tau|^{1/2}|w-Q_h w|_{1,\infty,\tau}\,\left(\int_\tau|u|^2\leq |\tau||u|_{\infty,\tau}^2\right)\\
\lesssim& h^{d/2}|w|_{1,\infty,\tau}\\
\lesssim& h^{d/2}h_0^{-d/2}|w|_{1,\widetilde{\tau}_0}\,\left(|u|_{\infty,\tau}^2\leq \frac{1}{|\widetilde{\tau}_0|}\int_{\widetilde{\tau}_0}|u|^2\right)\\
\lesssim& (h/h_0)^{d/2}|w|_{1,\widetilde{\tau}_0}.
\end{align*}
Summing over all $\tau\in\mathcal{T}_h$ leads to
\begin{align*}
|w-\Pi w|_{1,\Omega}\lesssim |w|_{1,\Omega_0}.
\end{align*}
Thus, we have
\begin{equation*}
\|\Pi w\|_{A_h} \lesssim|\Pi w|_{1,\Omega} \lesssim|\Pi w-w|_{1,\Omega}+|w|_{1,\Omega} \lesssim|w|_{1,\Omega_0} \lesssim\|w\|_{A_0},
\end{equation*}
which gives the desired result.
\end{proof}

Hence, it remains to construct an operator $P$ that satisfies the conditions \eqref{P}-\eqref{err_P}. 
The following lemma can be obtained by taking $P=Q_0$.
\begin{lemma}\label{lem_P}
Let $P=Q_0$, then $P$ satisfies
\begin{equation}
\|v-P v\|_{0,\Omega_0}+h_0|v-P v|_{1,\Omega_0} \lesssim h_0|v|_{1,\Omega}, \text { for any } v \in V_{h}.
\end{equation}
\end{lemma}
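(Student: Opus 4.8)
The plan is to establish the two estimates for $P=Q_0$ as immediate consequences of the already-stated error bounds \eqref{err_Q0} for the $H^1$-orthogonal projection $Q_0$ onto the bilinear auxiliary space $V_0$. The key observation is that any $v\in V_h$ is in particular an element of $H^1$, so the abstract estimate \eqref{err_Q0} applies directly to it: $\|v-Q_0 v\|+h_0\|\nabla(v-Q_0 v)\|\lesssim h_0|v|$. The only work is to interpret the norms over the correct domains and to note that $|v|$ here means the $H^1$-seminorm $|v|_{1,\Omega}$ (or $|v|_{1,\Omega_0}$, since $v$ vanishes outside $\Omega$ after extension by zero, or one uses the natural extension — this point needs a sentence of care).

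First I would recall that $V_0$ is the piecewisely bilinear finite element space on $\mathcal{T}_0$ vanishing on $\Omega_0\setminus\Omega$, that $\Omega\subset\Omega_0$, and that $h_0\approx h$. Then, given $v\in V_h$, I would view $v$ as a function on $\Omega_0$ (extended by zero outside $\Omega$, which is legitimate since $v\in H_0^1(\Omega)$), apply the definition of $Q_0$ and the error estimate \eqref{err_Q0}: the $L^2$-part gives $\|v-Q_0v\|_{0,\Omega_0}\lesssim h_0|v|_{1,\Omega_0}=h_0|v|_{1,\Omega}$, and the $H^1$-seminorm part gives $|v-Q_0v|_{1,\Omega_0}\lesssim |v|_{1,\Omega}$, so that $h_0|v-Q_0v|_{1,\Omega_0}\lesssim h_0|v|_{1,\Omega}$. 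Adding the two yields exactly the claimed inequality $\|v-Pv\|_{0,\Omega_0}+h_0|v-Pv|_{1,\Omega_0}\lesssim h_0|v|_{1,\Omega}$.

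The only genuine subtlety — and the step I expect to be the main obstacle — is justifying that \eqref{err_Q0}, which is stated as a generic approximation property of $Q_0$, can be applied to the fine-space function $v$ with the right-hand side being the $H^1$-seminorm over $\Omega$ rather than over $\Omega_0$; this requires that the extension of $v$ by zero does not create spurious seminorm contributions on $\Omega_0\setminus\Omega$, which holds because $v\in H_0^1(\Omega)$ so the zero extension lies in $H^1(\Omega_0)$ with $|\tilde v|_{1,\Omega_0}=|v|_{1,\Omega}$. I would also remark that this lemma is precisely what is needed to verify conditions \eqref{P} and \eqref{err_P} of Theorem \ref{thm_mg}: from $|Pv|_{1,\Omega_0}\lesssim |v-Pv|_{1,\Omega_0}+|v|_{1,\Omega}\lesssim|v|_{1,\Omega}$ one gets \eqref{P}, namely $\|Pv\|_{A_0}\lesssim\|v\|_{A_h}$; and since $\Pi=Q_h$, the combination $\|v-\Pi Pv\|_{0,\Omega}\le\|v-Q_h v\|_{0,\Omega}+\|Q_h(v-Pv)\|_{0,\Omega}$ together with \eqref{err_Q}, the $L^2$-stability of $Q_h$, and the $L^2$-bound just proved for $v-Pv$, gives $\|v-\Pi Pv\|_{0,\Omega}\lesssim h|v|_{1,\Omega}\lesssim\rho_{A_h}^{-1/2}\|v\|_{A_h}$, which is \eqref{err_P}. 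The proof of the lemma itself, however, is just the direct citation of \eqref{err_Q0} applied to $v$.
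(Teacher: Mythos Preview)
Your proposal treats the lemma as an immediate corollary of the abstract estimate \eqref{err_Q0}, applied to the zero extension of $v$ to $\Omega_0$. The paper does \emph{not} proceed this way, and the gap is the mesh/boundary mismatch you gloss over. The bilinear form $a(\cdot,\cdot)$---and hence the projection $Q_0$ and the estimate \eqref{err_Q0}---is defined over $\Omega$, not $\Omega_0$; moreover the boxes of $\mathcal{T}_0$ straddle $\partial\Omega$, so the phrase ``$V_0$ vanishes on $\Omega_0\setminus\Omega$'' cannot be taken literally for piecewise-bilinear functions on an unaligned mesh. Consequently you cannot assume $Q_0v\equiv 0$ on $\Omega_0\setminus\Omega$, and the left-hand norms $\|v-Q_0v\|_{0,\Omega_0}$ and $|v-Q_0v|_{1,\Omega_0}$ do not automatically reduce to norms over $\Omega$. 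Your paragraph on the ``only genuine subtlety'' addresses the right-hand side (indeed $|v|_{1,\Omega_0}=|v|_{1,\Omega}$ for the zero extension) but not this issue on the left.

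The paper's proof, following \cite{bp1993,x1996}, handles exactly this point by splitting $\Omega_0=\widetilde\Omega_0\cup(\Omega_0\setminus\widetilde\Omega_0)$, where $\widetilde\Omega_0$ is the union of boxes of $\mathcal{T}_0$ that do not meet $\partial\Omega$. On $\widetilde\Omega_0$ the estimate \eqref{err_Q0} over $\Omega$ suffices. On the boundary strip $\Omega_0\setminus\widetilde\Omega_0$ the $H^1$-seminorm of $Q_0v$ is controlled elementwise via an inverse inequality together with $h_0\approx h$, and the $L^2$-norm on the strip is controlled by a scaled Poincar\'e inequality on each box (using that $v$, being in $V_h\subset H_0^1(\Omega)$, vanishes at some point of every box touching $\partial\Omega$). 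This boundary-strip argument is the actual content of the lemma; your direct citation of \eqref{err_Q0} elides it.
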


\begin{proof}
Inspired by the idea in \cite{bp1993,x1996}, we have the following derivation. 
Let $\widetilde{\Omega}_0$ be the domain consisting of the elements in $\Omega_0$ that do not intersect with $\partial\Omega$, then there is
\begin{align*}
\begin{aligned}
|v-Pv|_{1,\Omega_0}=|v-Q_0v|_{1,\Omega_0}\lesssim& |v-Q_0v|_{1,\widetilde{\Omega}_0}+|v-Q_0v|_{1,\Omega_0/\widetilde{\Omega}_0}\\
\lesssim&|v-Q_0v|_{1,\widetilde{\Omega}_0}+|v|_{1,\Omega_0/\widetilde{\Omega}_0}+|Q_0v|_{1,\Omega_0/\widetilde{\Omega}_0}.
\end{aligned}
\end{align*}
Since $v\in V_h$ vanishes on $\Omega_0/\Omega$, it follows from \eqref{err_Q0} that
\begin{align}
|v-Q_0v|_{1,\widetilde{\Omega}_0}\lesssim |v&-Q_0v|_{1,\Omega}\lesssim|v|_{1,\Omega}, \label{term1}\\
|v|_{1,\Omega_0/\widetilde{\Omega}_0}&\lesssim |v|_{1,\Omega}.\label{term2} 
\end{align}
Given $\tau_0\in \mathcal{T}_0\cap\Omega_0/\widetilde{\Omega}_0$, let $\widetilde{\tau}$ be the union of elements in $\mathcal{T}_h$ that intersect with $\tau_0$. Then, we have
\begin{align*}
|v-Q_0v|_{1,\tau_0}\lesssim &|\tau_0|^{1/2}|v-Q_0v|_{1,\infty,\tau_0}\lesssim h_0^{d/2}|v|_{1,\infty,\tau_0}\lesssim h_0^{d/2}h^{-d/2}|v|_{1,\widetilde{\tau}}\lesssim (h_0/h)^{d/2}|v|_{1,\widetilde{\tau}}.
\end{align*}
This implies that
\begin{align}
|Q_0v|_{1,\Omega_0/\widetilde{\Omega}_0}\lesssim |v-Q_0v|_{1,\Omega_0/\widetilde{\Omega}_0}+|v|_{1,\Omega_0/\widetilde{\Omega}_0}\lesssim |v|_{1,\Omega}.\label{term3}
\end{align}
Synthesizing \eqref{term1}-\eqref{term3} gives
\begin{align}
|v-Pv|_{1,\Omega_0}\lesssim|v|_{1,\Omega}.
\end{align}

Since $\tau_0$ is a square ($d=2$) or a cube ($d=3$), by Poincare inequality and scaling, the following inequality
\begin{align*}
\|v\|^2_{0,\tau_0}\lesssim l^2|v|_{1,\tau_0}^2,
\end{align*}
holds for all $v$ vanishing at one point in $\tau_0$, where $l=O(h_0)$ denotes its side length. 
The strip $\Omega_0/\widetilde{\Omega}_0$ can be covered by these $\tau_0$, one can easily deduce 
\begin{align}
\|v\|_{0,\Omega_0/\widetilde{\Omega}_0}\lesssim h_0|v|_{1,\Omega}.
\end{align}
According to \eqref{err_Q0}, the following inequalities hold
\begin{align}
\|Q_0v\|_{0,\Omega_0/\widetilde{\Omega}_0}\lesssim \|v-Q_0v\|_{0,\Omega_0/\widetilde{\Omega}_0}&+\|v\|_{0,\Omega_0/\widetilde{\Omega}_0}\lesssim h_0|v|_{1,\Omega},\\
\|v-Q_0v\|_{0,\widetilde{\Omega}_0}&\lesssim h_0|v|_{1,\Omega}. 
\end{align}
Thus, we have 
\begin{align}
\|v-Pv\|_{0,\Omega_0}\lesssim&\|v-Q_0v\|_{0,\widetilde{\Omega}_0}+\|v\|_{0,\Omega_0/\widetilde{\Omega}_0}+\|Q_0v\|_{0,\Omega_0/\widetilde{\Omega}_0}\lesssim h_0|v|_{1,\Omega},
\end{align}
which completes the proof.
\end{proof}

\begin{lemma}
The operator $P$ satisfies the properties \eqref{P} and \eqref{err_P}.
\end{lemma}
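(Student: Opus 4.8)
The plan is to verify the two remaining hypotheses of Theorem \ref{thm_mg} with the concrete choices $P=Q_0$ and $\Pi=Q_h$ already fixed above, using only the finite element error estimates \eqref{Q}--\eqref{err_Q0}, the estimate of Lemma \ref{lem_P}, the norm equivalences $\|\cdot\|_{A_h}\approx|\cdot|_{1,\Omega}$ and $\|\cdot\|_{A_0}\approx|\cdot|_{1,\Omega_0}$ (which follow from Korn's and Poincar\'e's inequalities since the finite element functions vanish on $\partial\Omega$, respectively on $\Omega_0\setminus\Omega$), together with the quantitative facts $h_0\approx h$ and $\rho_{A_h}=\rho(A_h)=O(h^{-2})$. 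Once \eqref{P}--\eqref{err_P} are in hand, all of \eqref{BA}--\eqref{err_P} hold and Theorem \ref{thm_mg} applies, so $B_h$ is a uniform preconditioner for $A_h$.

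First I would dispatch \eqref{P}, which is essentially immediate. Since $v\in V_h$ vanishes on $\Omega_0\setminus\Omega$, its zero extension satisfies $|v|_{1,\Omega_0}=|v|_{1,\Omega}$, so the $H^1$-stability \eqref{Q0} of $Q_0$ gives
\begin{equation*}
\|Pv\|_{A_0}\approx|Q_0v|_{1,\Omega_0}\lesssim|v|_{1,\Omega_0}=|v|_{1,\Omega}\approx\|v\|_{A_h},
\end{equation*}
which is precisely \eqref{P}.

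The substance lies in \eqref{err_P}. The key observation is that $Q_h$ is a projection onto $V_h$, so $Q_hv=v$ for $v\in V_h$, whence
\begin{equation*}
v-\Pi Pv=v-Q_hQ_0v=Q_h v-Q_h Q_0 v=Q_h(v-Q_0v).
\end{equation*}
Setting $w:=v-Q_0v\in H^1(\Omega_0)$ and combining the triangle inequality with the $L^2$ error estimate in \eqref{err_Q} applied to $w$,
\begin{equation*}
\|Q_hw\|_{0,\Omega}\le\|w-Q_hw\|_{0,\Omega}+\|w\|_{0,\Omega}\lesssim h|w|_{1,\Omega}+\|w\|_{0,\Omega}\le h|w|_{1,\Omega_0}+\|w\|_{0,\Omega_0}.
\end{equation*}
Now Lemma \ref{lem_P} supplies $\|w\|_{0,\Omega_0}=\|v-Q_0v\|_{0,\Omega_0}\lesssim h_0|v|_{1,\Omega}$ and $|w|_{1,\Omega_0}=|v-Q_0v|_{1,\Omega_0}\lesssim|v|_{1,\Omega}$, so with $h_0\approx h$ we get $\|v-\Pi Pv\|_{0,\Omega}\lesssim h|v|_{1,\Omega}$. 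Squaring and using $\rho_{A_h}^{-1}\approx h^2$ and $|v|_{1,\Omega}^2\approx\|v\|_{A_h}^2$ yields
\begin{equation*}
\|v-\Pi Pv\|_{0,\Omega}^2\lesssim h^2|v|_{1,\Omega}^2\lesssim\rho_{A_h}^{-1}\|v\|_{A_h}^2,
\end{equation*}
which is \eqref{err_P}.

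The routine bookkeeping I would still carry out carefully is the domain mismatch between $\Omega$ and $\Omega_0$ (zero-extension and restriction of functions, and monotonicity of the $L^2$- and $H^1$-seminorms under $\Omega\subset\Omega_0$) and a clean statement of the norm equivalences for the elasticity form on the enlarged domain $\Omega_0$. The only genuinely nontrivial ingredient, which I would invoke rather than reprove, is the $L^2$-type estimate \eqref{err_Q} for the energy projection $Q_h$ (see \cite{bs1992}); this is an Aubin--Nitsche duality argument that requires $H^2$ (or at least $H^{1+s}$) elliptic regularity for the elasticity operator on $\Omega$, so the main obstacle is really buried in that hypothesis rather than in the combinatorics of the present argument.
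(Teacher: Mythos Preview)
Your proof is correct and follows essentially the same route as the paper: for \eqref{P} the paper goes through Lemma~\ref{lem_P} and a triangle inequality while you invoke \eqref{Q0} directly, and for \eqref{err_P} your identity $v-\Pi Pv=Q_h(v-Q_0v)$ followed by the split $\|Q_hw\|_{0,\Omega}\le\|w-Q_hw\|_{0,\Omega}+\|w\|_{0,\Omega}$ is algebraically the same as the paper's decomposition $\|v-\Pi Pv\|_{0,\Omega}\le\|v-Pv\|_{0,\Omega}+\|Pv-\Pi Pv\|_{0,\Omega}$, with both terms controlled by Lemma~\ref{lem_P} and \eqref{err_Q} respectively. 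Your additional remark that the $L^2$ estimate \eqref{err_Q} hides an Aubin--Nitsche duality (hence a regularity assumption) is a fair observation that the paper leaves implicit.
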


\begin{proof}
By using Lemma \ref{lem_P}, for all $v \in V_{h}$, we have
\begin{equation*}
\|P v\|_{A_0} \lesssim |P v|_{1,\Omega_0} \lesssim|P v-v|_{1,\Omega_0}+|v|_{1,\Omega_0} \lesssim|v|_{1,\Omega} \lesssim\|v\|_{A_h},
\end{equation*}
which leads to \eqref{P}. Therefore, we further to obtain
\begin{equation*}
\|v-\Pi P v\|_{0,\Omega} \lesssim\|v-Pv\|_{0,\Omega}+\|Pv-\Pi P v\|_{0,\Omega} \lesssim h|v|_{1,\Omega} \lesssim h\| v \|_{A_h}.
\end{equation*}
Together with the estimate $\rho_{A_h}=O\left(h^{-2}\right)$, the above conclusion yields the proof of the inequality \eqref{err_P}. 
\end{proof}

At this stage, the given V-ASMG construction meets all of the conditions in Theorem \ref{thm_mg}.

\section{Numerical examples}
In this section, we present a number of numerical experiments to test the performance of the proposed V-ASMG method as a preconditioner of the PCG method for solving the large sparse linear equations \eqref{equ}, which are derived from the linear elasticity equations \eqref{prob} in two and three dimensions.
The experimental configuration is as follows:
\begin{itemize}
\item[(1)] Algorithm implementation:
Adopting the Lagrange linear element to solve the linear elasticity equations \eqref{prob}, and then the stiffness matrix $A$, the load vector $F$ and the vertex coordinates of the initial grid are obtained.
The AMG method refers to the code framework in \cite{boss2023} with Ruge-St$\mathrm{\ddot{u}}$ben coarsening strategy.
All the relevant algorithms are implemented in C$++$ language and embedded in domestic finite element structure simulation analysis software ``BEFEM''.
Furthermore, the result obtained by LU decomposition method is used as the reference solution.
\item[(2)] Hardware configuration:\\
Computer model: i9-13900HX;\\
Internal memory: 64G;\\
Number of cores: 24;\\
Number of threads: 32.
\item[(3)] Convergence criterion: 
The relative residual $rel\_{res}=\frac{\Vert F-AU^{(k)} \Vert}{\Vert F\Vert}$ of a certain step $k$ is less than the given tolerance $\epsilon=10^{-6}$.
\item[(4)] Symbol description(All algorithms without additional description come from the ``BEFEM'' software):\\
``PCG(AMG-AMGCL)''represents the PCG method with classical AMG method as preconditioner in library AMGCL;\\
``PCG(AMG)''represents the PCG method with classical AMG method as preconditioner;\\
``PCG(B-ASMG)''represents the PCG method with barycenter-based auxiliary space multi- grid(B-ASMG) method in \cite{gwx2016} as preconditioner;\\
``PCG(V-ASMG)''represents the PCG method with our V-ASMG method as preconditioner.
\end{itemize}

\subsection{The two-dimensional problems}
This section shows four two-dimensional examples of different working conditions and different sizes, that is, $d=2$.

\noindent {\bf Example 4.1.} Round-hole plate tensile problem with Young's modulus $E=2.1\mathrm{E}+05$ and Poisson's ratio $\nu=0.3$. 
The aim of this example is to numerically simulate the mechanical behavior of rectangular thin plate with round hole under unidirectional tensile load. 
Since the structure and boundary condition of the problem are symmetric, it is sufficient to take the top right quarter model as the representative.
The 10$\times$10 computational region with a round hole of radius $r=1$ is divided by an unstructured triangular grid shown in \figurename \ref{figure1-1}(a). 
A uniform line force of 10(i.e. $q_1=10$) along the $x$-axis is applied on its right boundary, the displacement in the $x$-direction is constrained to be $u_1=0$ on its left boundary and the displacement in the $y$-direction is constrained to be $u_2=0$ on its bottom boundary. 
The above problem is solved based on the Lagrange linear element, and the corresponding sparse stiffness matrix $A$ is obtained, the relevant information of which is shown in Table \ref{table1-1} below. 
Using the AMG-AMGCL, AMG, B-ASMG and V-ASMG methods as the preconditioners of the PCG method to iteratively solve the equations corresponding to the above problem, the iteration steps, numerical error between the numerical solution and the reference solution under the $L_2$ norm, setup time, application time and total time are shown in Table \ref{table1-2} below.
The convergence history of the relative residual varying with the
iteration steps is shown in \figurename \ref{figure1-2}. 
The displacement nephogram is shown in \figurename \ref{figure1-1}(b).
The above results show that compared with the other three methods, V-ASMG method requires the smallest iteration steps within the shortest time to reach the given tolerance, and obtains the smallest error when the iteration terminates. 
The reason for the long setup time is that a large number of time-consuming search operations are involved in the implementation of the algorithm.
This is because when numbering the coarse grid vertices, the four vertices corresponding to a particular rectangular region may already be added to the list of coarse grid vertices, so it must be determined whether it is already in the list, and if so, what is the number. 
For this example, the numerical performance of above four kinds of preconditioners under different numbers of unknowns and Poisson's ratios has also been tested, as shown in \figurename \ref{figure1-3} and \figurename \ref{figure1-4}.
With the same number of unknowns or the same Poisson ratio, the corresponding iteration steps and total solution time of the V-ASMG scheme are the least among the four schemes. 
Moreover, for the same working condition, the number of iteration steps of the V-ASMG scheme almost does not change with the number of unknowns and Poisson's ratio, and its slope of time variation is also the smallest.
\begin{figure}[!ht]
	\centerline{
\subfigure[Geometric model.] {\includegraphics[width=0.500\textwidth]{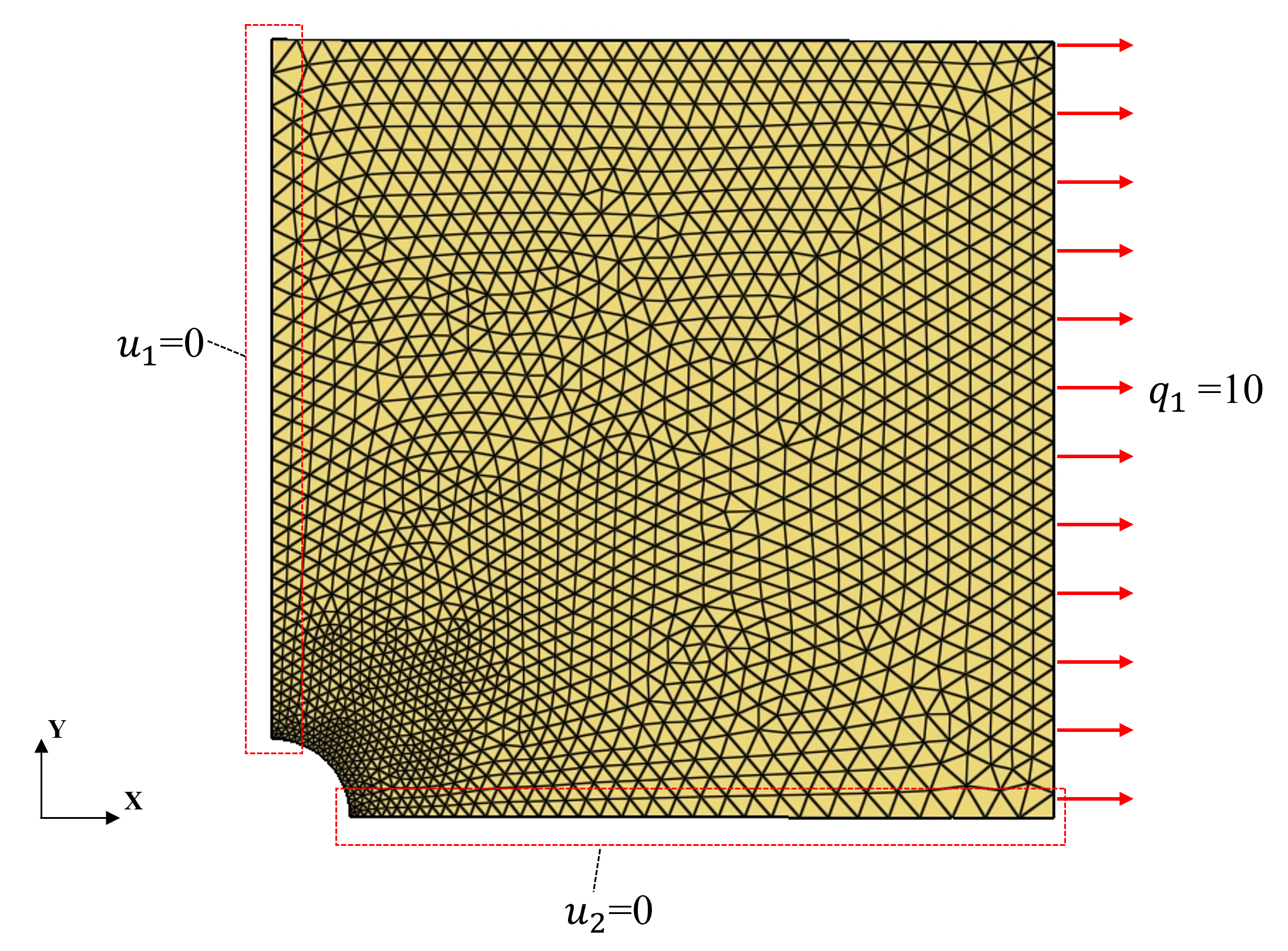}}
\subfigure[Calculation result.] {\includegraphics[width=0.425\textwidth]{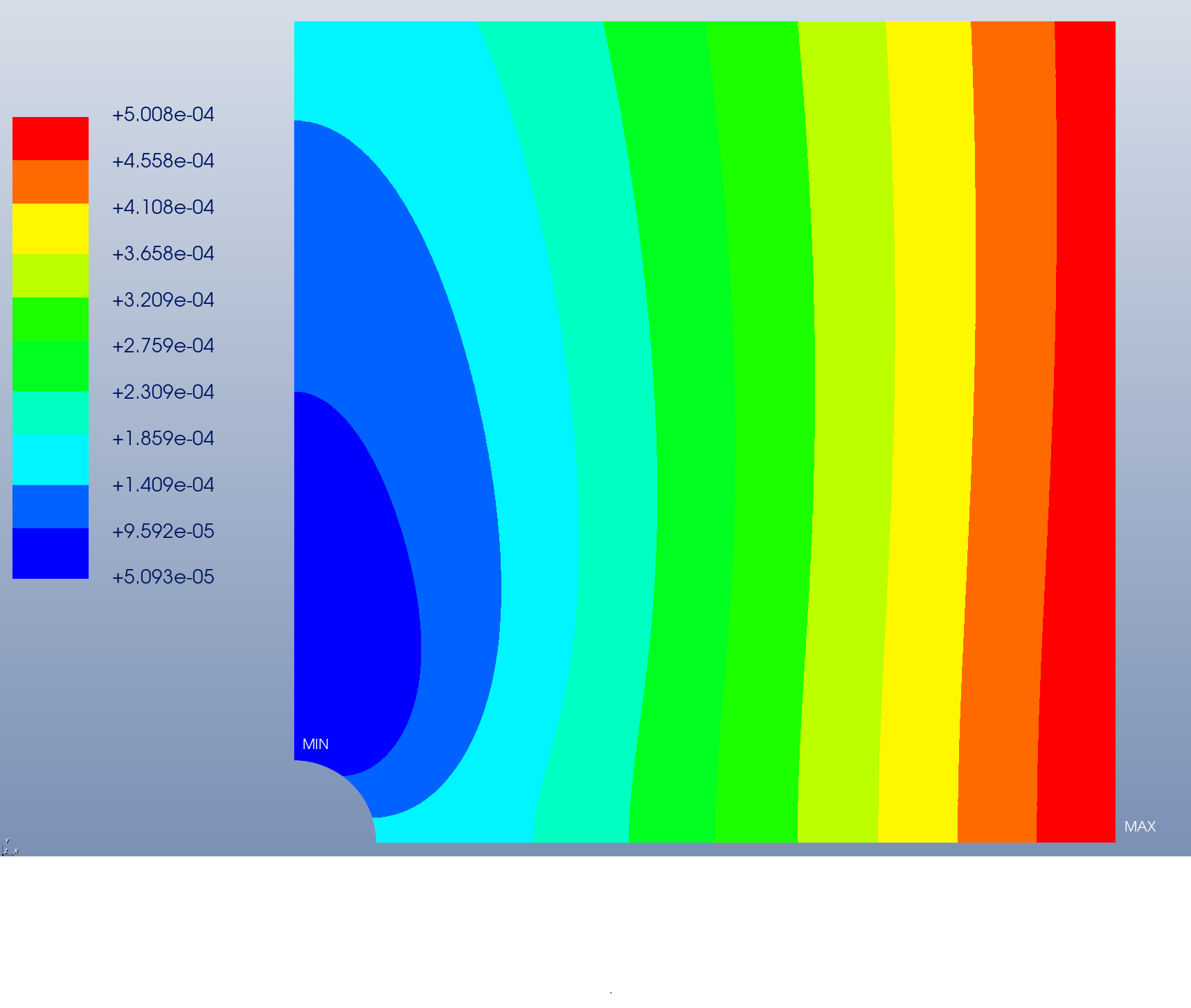}}}
	\caption{Round-hole plate tensile problem.}
	\label{figure1-1}
\end{figure}
\begin{table}[!ht]
\begin{center}
{\caption{
The relevant information of the sparse stiffness matrix $A$ corresponding to the round-hole plate tensile problem.}
\begin{tabular}{|c|c|}
\hline
Number of vertices           &99054
\\ \hline
Number of unknowns           &198108
\\ \hline
Number of nonzero elements   &2763008
\\ \hline
Symmetric or not             &Yes
\\ \hline
Positive definite or not     &Yes
\\ \hline
Condition number             &2.42278E+06
\\ \hline
Spectral radius              &1.27550E+06
\\ \hline
\end{tabular}
\label{table1-1}}
\end{center}
\end{table}
\begin{table}[!ht]
\begin{center}
{\caption{
The results of the round-hole plate tensile problem using the PCG method with different preconditioners.}
\begin{tabular}{|c|c|c|c|c|}
\hline
\multirow{2}{*}{Method} &AMGCL &\multicolumn{3}{c|}{BEFEM} 
\\ \cline{2-5} 
 &PCG(AMG) &PCG(AMG) &PCG(B-ASMG) &PCG(V-ASMG) 
\\ \hline
Iteration steps  &933 &596 &96 &9                           
\\ \hline
Numerical error  &1.16950E-07 &9.55033E-08 
                 &7.30630E-08 &1.00424E-08
\\ \hline
Setup time       &  1.636s & 2.264s & 2.970s &6.246s
\\ \hline
Application time &214.728s &48.934s & 8.936s &1.067s
\\ \hline
Total time       &216.364s &51.198s &11.906s &7.313s
\\ \hline 
\end{tabular}
\label{table1-2}}
\end{center}
\end{table}
\begin{figure}[!ht]
	\centerline{\includegraphics[width=0.5\textwidth]{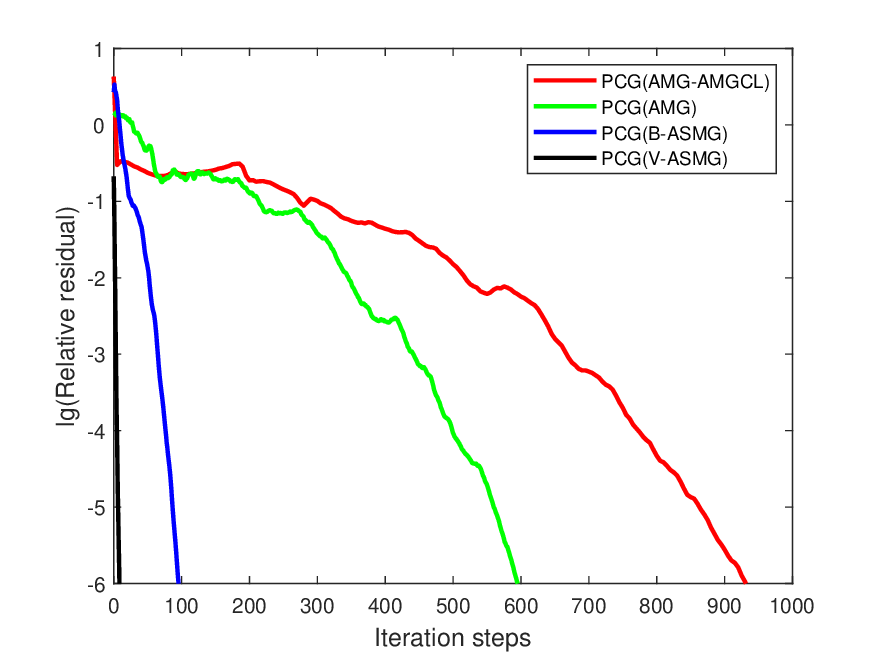}}
	\caption{The convergence history of the relative residual of the round-hole plate tensile problem.}
	\label{figure1-2}
\end{figure}
\begin{figure}[!ht]
	\centerline{
\subfigure[] 
{\includegraphics[width=0.5\textwidth]{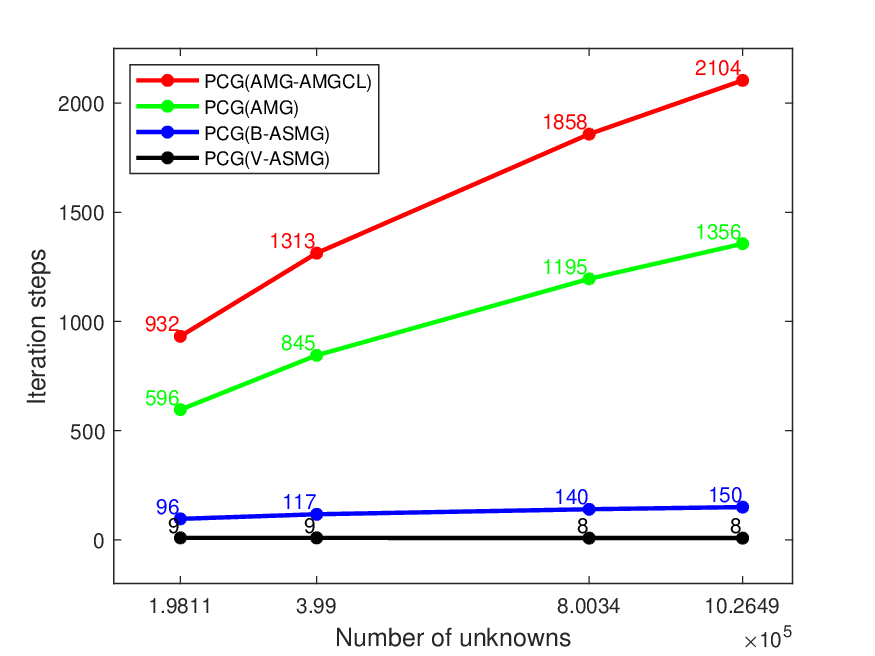}}
\subfigure[] 
{\includegraphics[width=0.5\textwidth]{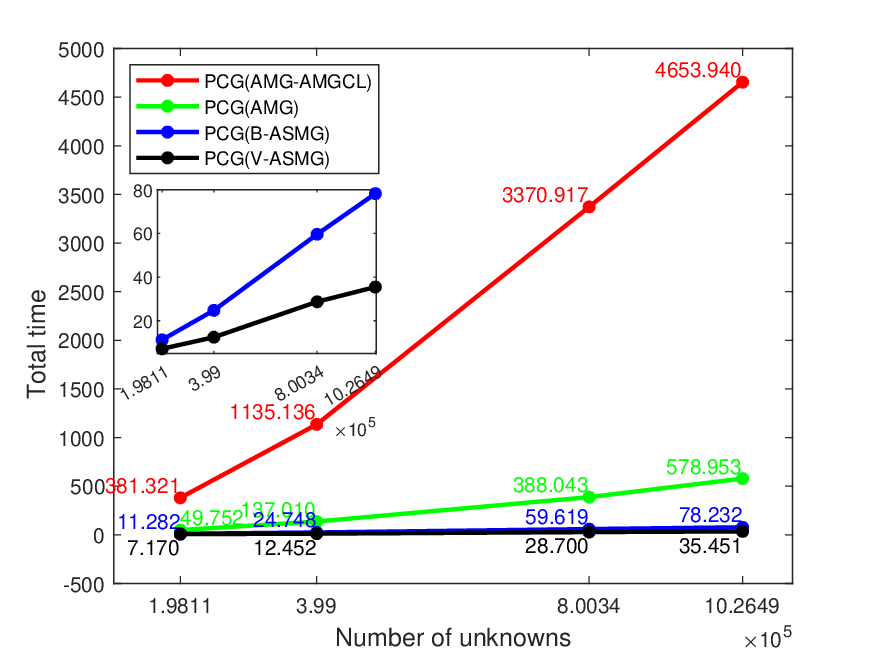}}}
	\caption{The line chart of iteration steps and total time along with the number of unknowns for the round-hole plate tensile problem.}
	\label{figure1-3}
\end{figure}
\begin{figure}[!ht]
	\centerline{
\subfigure[] 
{\includegraphics[width=0.5\textwidth]{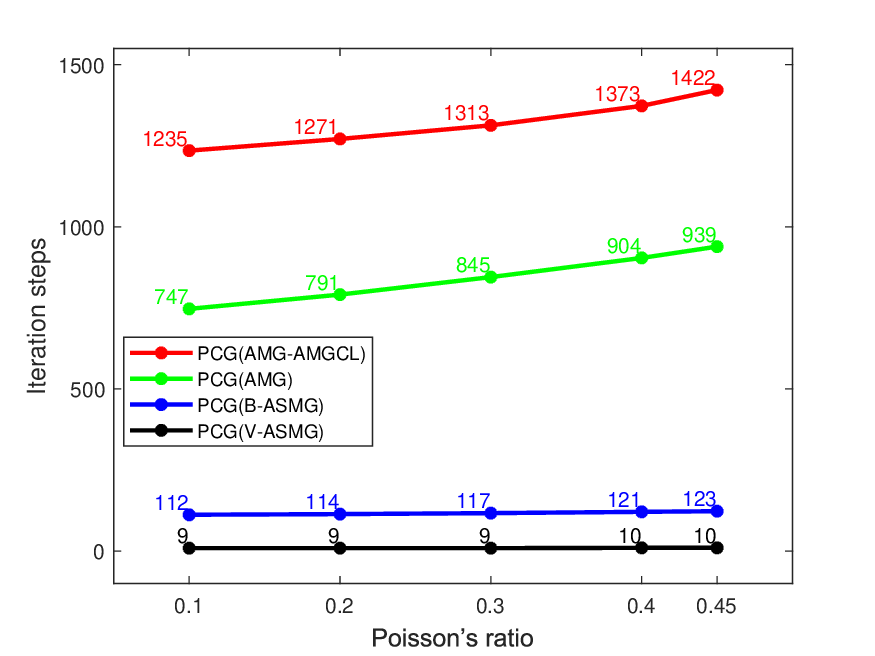}}
\subfigure[] 
{\includegraphics[width=0.5\textwidth]{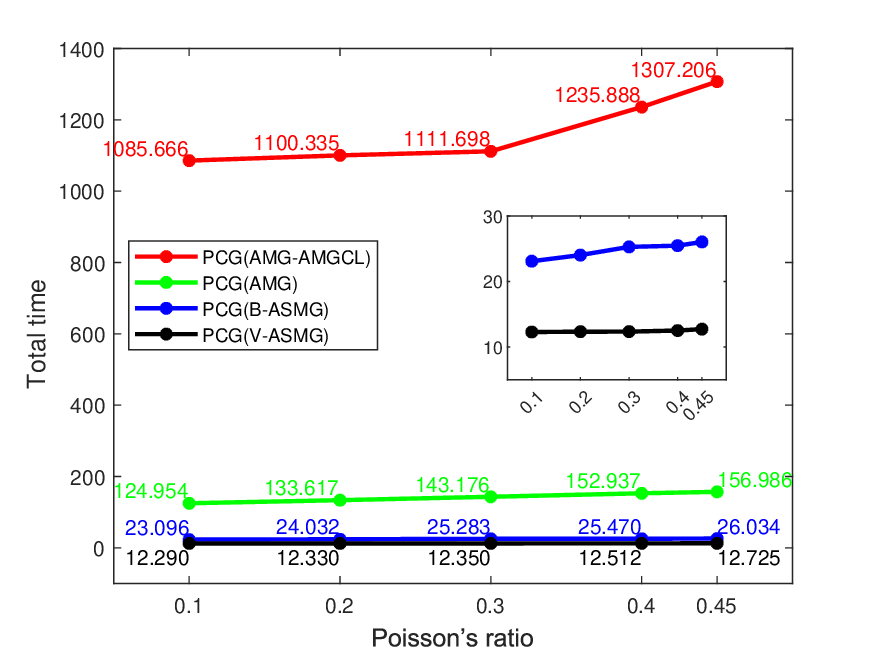}}}
	\caption{The line chart of iteration steps and total time along with the Poisson's ratio $\nu=0.1,0.2,0.3,0.4,0.45$ for the round-hole plate tensile problem.}
	\label{figure1-4}
\end{figure}

\noindent {\bf Example 4.2.} Toroidal stress problem with Young's modulus $E=2.1\mathrm{E}+05$ and Poisson's ratio $\nu=0.3$. 
In this example, the mechanical behavior of a quarter ring under circumferential load is numerically simulated. 
The radius of the outer ring is $R=10$ and the radius of the inner ring is $r=1$.
Divide its computational region by an unstructured triangular grid shown in \figurename \ref{figure2-1}(a). 
Apply a uniform line force of 10(i.e. $q_2=-10$) in the negative $y$-direction at the bottom boundary and constrain the displacements in the $x$-direction and $y$-direction to be $u_1=u_2=0$ at the left boundary. 
Adopt the Lagrange linear element to solve the above problem, and obtain the corresponding sparse stiffness matrix $A$, whose relevant information is presentated in Table \ref{table2-1} below. 
Use the AMG-AMGCL, AMG, B-ASMG and V-ASMG methods as the preconditioners of the PCG method to iteratively solve the equations corresponding to the above problem. 
Table \ref{table2-2} shows the iteration steps, numerical error between the numerical solution and the reference solution under the $L_2$ norm, setup time, application time and total time.
\figurename \ref{figure2-2} shows the convergence history of the relative residual varying with the iteration steps. 
\figurename \ref{figure2-1}(b) shows the displacement nephogram.
From the above results, it is not hard to see that the V-ASMG algorithm has significant advantages over the other three algorithms in terms of the number of iterative convergence steps, total solution time and error accuracy. 
\begin{figure}[!ht]
	\centerline{
\subfigure[Geometric model.] {\includegraphics[width=0.40\textwidth]{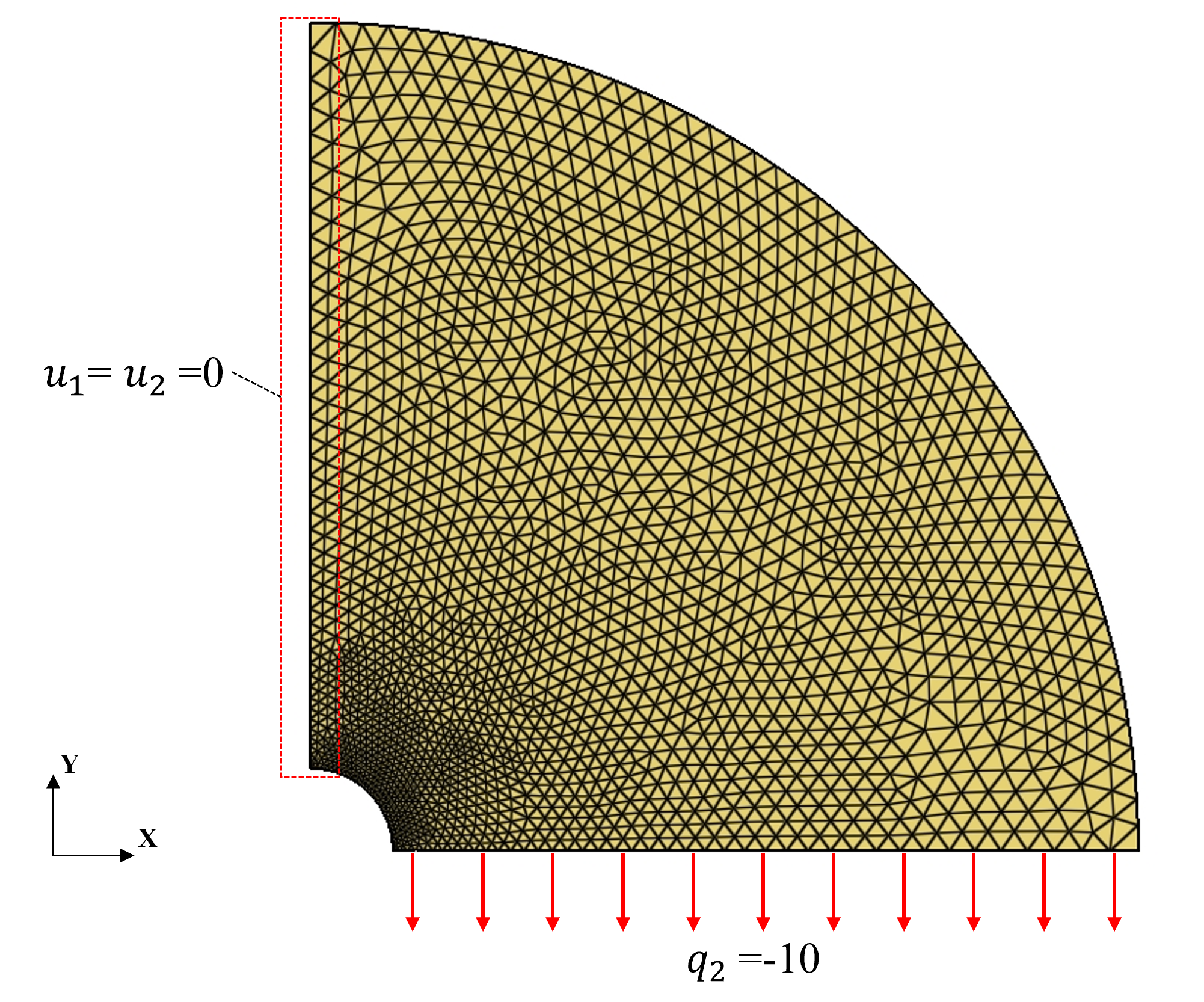}}
\subfigure[Calculation result.] {\includegraphics[width=0.40\textwidth]{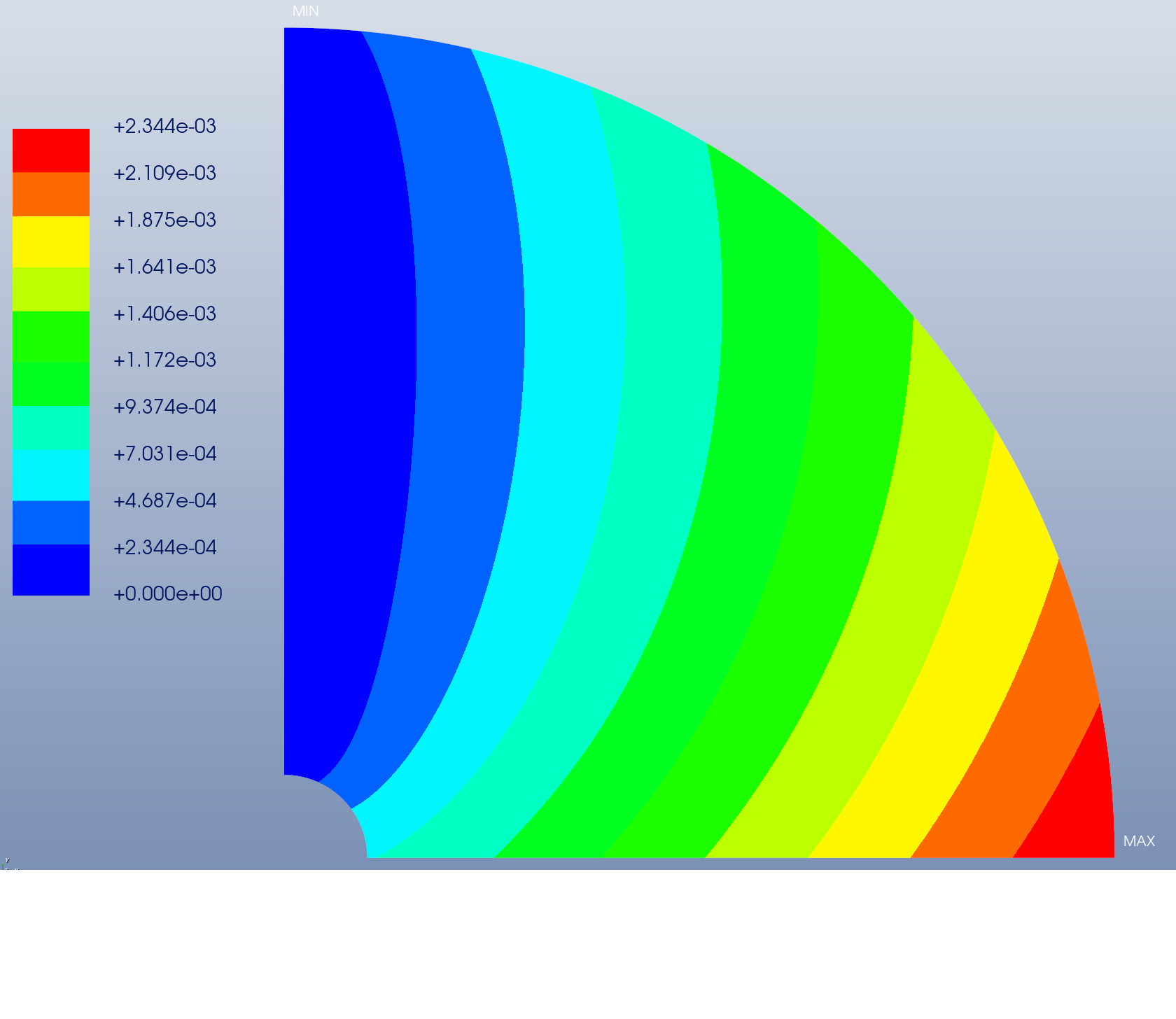}}}
	\caption{Toroidal stress problem.}
	\label{figure2-1}
\end{figure}
\begin{table}[!ht]
\begin{center}
{\caption{
The relevant information of the sparse stiffness matrix $A$ corresponding to the toroidal stress problem.}
\begin{tabular}{|c|c|}
\hline
Number of vertices           &150129
\\ \hline
Number of unknowns           &300258
\\ \hline
Number of nonzero elements   &4190788
\\ \hline
Symmetric or not             &Yes
\\ \hline
Positive definite or not     &Yes
\\ \hline
Condition number             &3.49371E+06
\\ \hline
Spectral radius              &1.36340E+06
\\ \hline
\end{tabular}
\label{table2-1}}
\end{center}
\end{table}
\begin{table}[!ht]
\begin{center}
{\caption{
The results of the toroidal stress problem using the PCG method with different preconditioners.}
\begin{tabular}{|c|c|c|c|c|}
\hline
\multirow{2}{*}{Method} &AMGCL &\multicolumn{3}{c|}{BEFEM} 
\\ \cline{2-5} 
 &PCG(AMG) &PCG(AMG) &PCG(B-ASMG) &PCG(V-ASMG) 
\\ \hline
Iteration steps  &1127 &688 &111 &11                           
\\ \hline
Numerical error  &2.28540E-08 &3.21710E-08 
                 &1.92695E-08 &2.14983E-09
\\ \hline
Setup time       &  3.573s & 3.563s & 4.156s &8.239s
\\ \hline
Application time &655.842s &85.889s &13.703s &1.679s
\\ \hline
Total time       &659.415s &89.452s &17.859s &9.918s
\\ \hline 
\end{tabular}
\label{table2-2}}
\end{center}
\end{table}
\begin{figure}[!ht]
	\centerline{\includegraphics[width=0.5\textwidth]{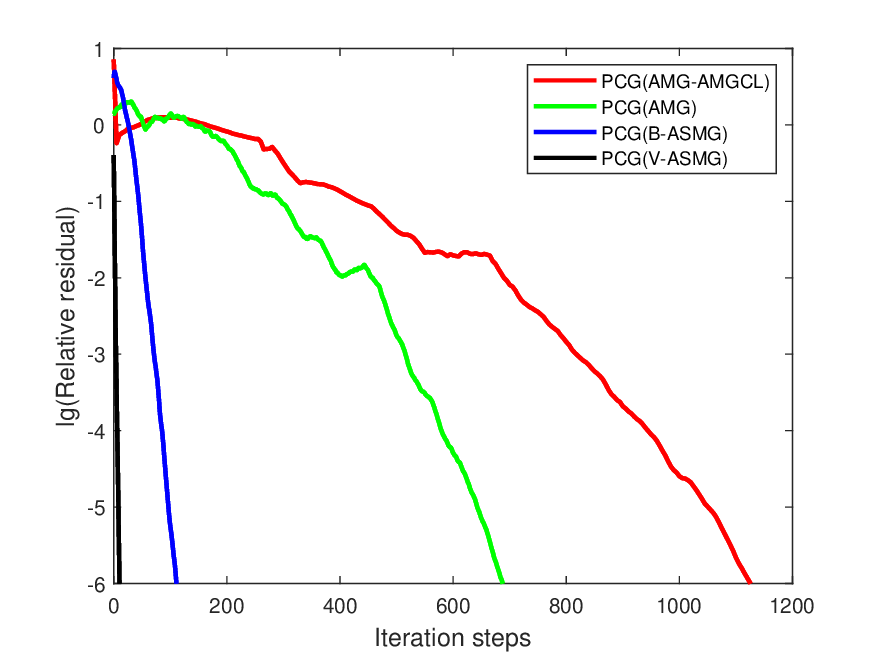}}
	\caption{The convergence history of the relative residual of the toroidal stress problem.}
	\label{figure2-2}
\end{figure}

\noindent {\bf Example 4.3.} Square-hole plate tensile problem with Young's modulus $E=2.1\mathrm{E}+05$ and Poisson's ratio $\nu=0.3$. 
The purpose of this example is to numerically simulate the mechanical behavior of rectangular thin plate with square hole under unidirectional tensile load.
Making use of the symmetry of structure and boundary condition, the top right quarter model is adopted to simplify the calculation. 
Again, an unstructured triangular grid shown in \figurename \ref{figure3-1}(a) is used to divide the 10$\times$10 computational region with a square hole of half length $l=1$.
Apply the same boundary conditions as in Example 4.1.
The Lagrange linear element is used to solve the above problem, and the corresponding sparse stiffness matrix $A$ with the properties shown in Table \ref{table3-1} is obtained. 
The equations corresponding to the above problem are solved by using the AMG-AMGCL, AMG, B-ASMG and V-ASMG methods as the preconditioners of the PCG method, the iteration steps, numerical error between the numerical solution and the reference solution under the $L_2$ norm, setup time, application time and total time are shown in Table \ref{table3-2} below.
The convergence history of the relative residual varying with the
iteration steps is shown in \figurename \ref{figure3-2}. 
The displacement nephogram is shown in \figurename \ref{figure3-1}(b).
The above results indicate that compared to the other three methods, V-ASMG method needs the smallest iteration steps within the shortest time to reach the given tolerance, and achieves the smallest error when the iteration stops. 
\begin{figure}[!ht]
	\centerline{
\subfigure[Geometric model.] {\includegraphics[width=0.500\textwidth]{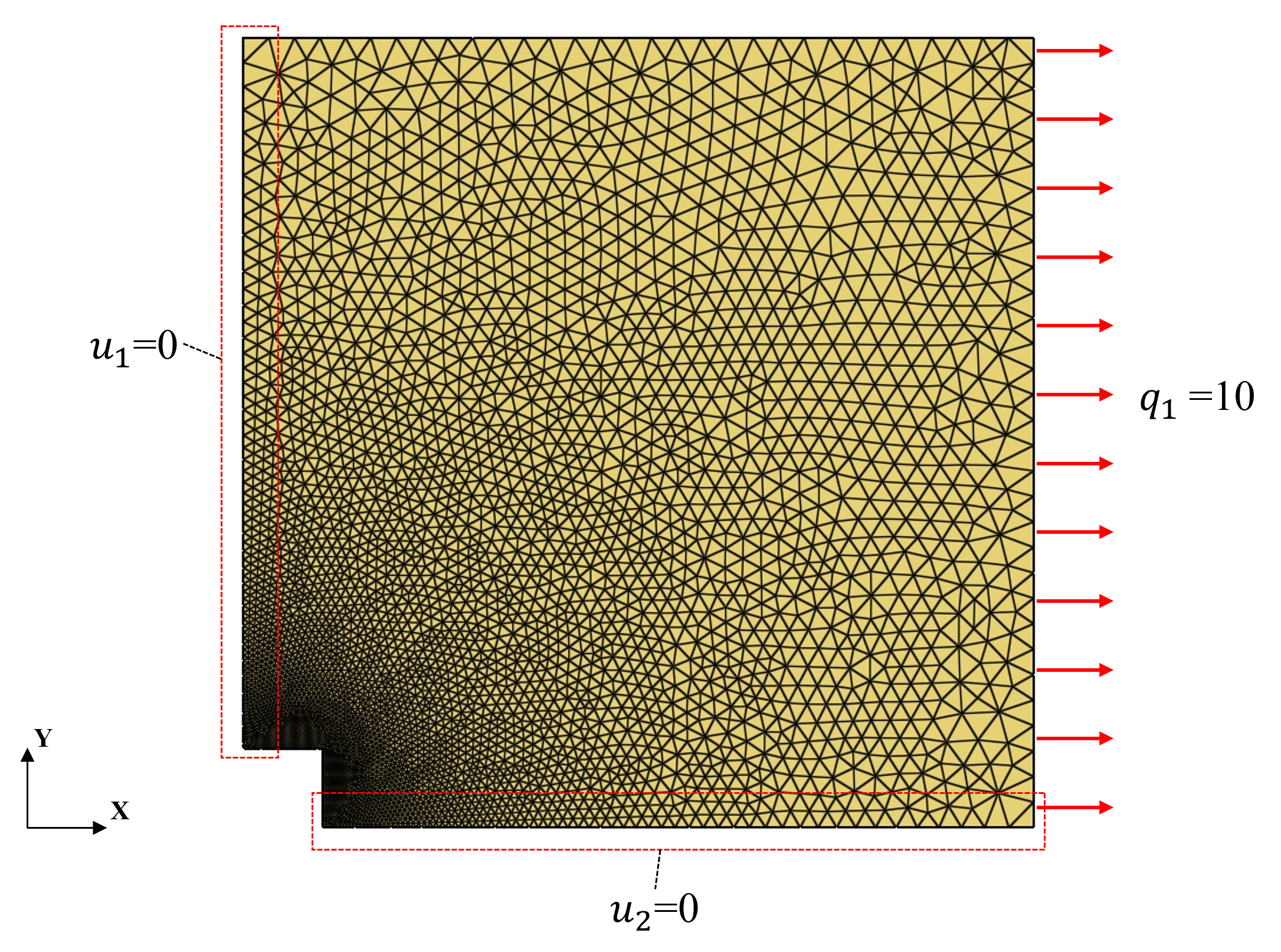}}
\subfigure[Calculation result.] {\includegraphics[width=0.425\textwidth]{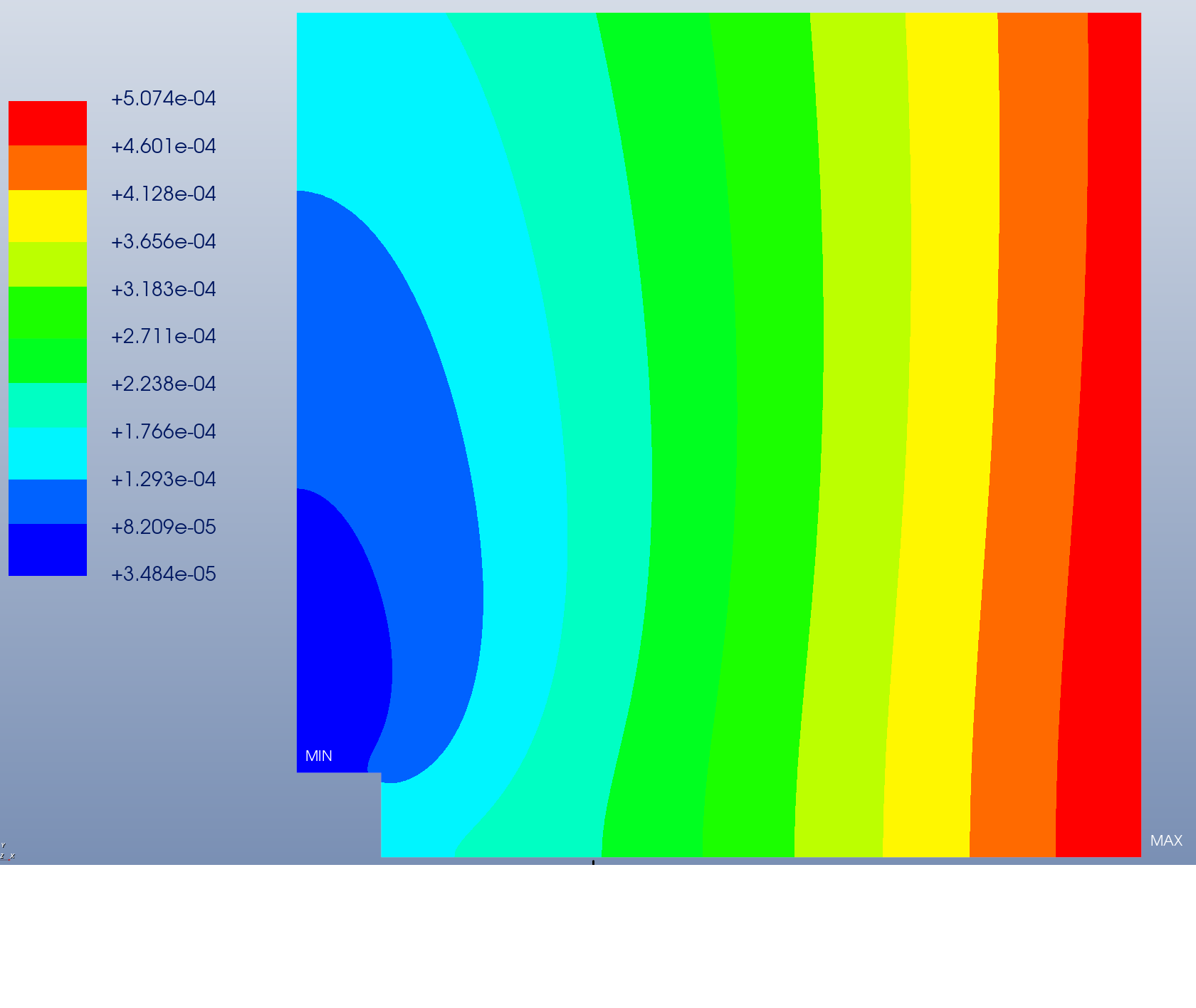}}}
	\caption{Square-hole plate tensile problem.}
	\label{figure3-1}
\end{figure}
\begin{table}[!ht]
\begin{center}
{\caption{
The relevant information of the sparse stiffness matrix $A$ corresponding to the square-hole plate tensile problem.}
\begin{tabular}{|c|c|}
\hline
Number of vertices           &200185
\\ \hline
Number of unknowns           &400370
\\ \hline
Number of nonzero elements   &5589868
\\ \hline
Symmetric or not             &Yes
\\ \hline
Positive definite or not     &Yes
\\ \hline
Condition number             &4.93885E+06
\\ \hline
Spectral radius              &6.74688E+05
\\ \hline
\end{tabular}
\label{table3-1}}
\end{center}
\end{table}
\begin{table}[!ht]
\begin{center}
{\caption{
The results of the square-hole plate tensile problem using the PCG method with different preconditioners.}
\begin{tabular}{|c|c|c|c|c|}
\hline
\multirow{2}{*}{Method} &AMGCL &\multicolumn{3}{c|}{BEFEM} 
\\ \cline{2-5} 
 &PCG(AMG) &PCG(AMG) &PCG(B-ASMG) &PCG(V-ASMG) 
\\ \hline
Iteration steps  &1317 &823 &123 &10                           
\\ \hline
Numerical error  &9.13710E-08 &7.96228E-08 
                 &5.82555E-08 &4.23854E-09
\\ \hline
Setup time       &   4.554s &  4.610s & 5.559s &12.787s
\\ \hline
Application time &1195.918s &135.111s &21.868s & 2.266s
\\ \hline
Total time       &1200.472s &139.721s &27.427s &15.053s
\\ \hline 
\end{tabular}
\label{table3-2}}
\end{center}
\end{table}
\begin{figure}[!ht]
	\centerline{\includegraphics[width=0.5\textwidth]{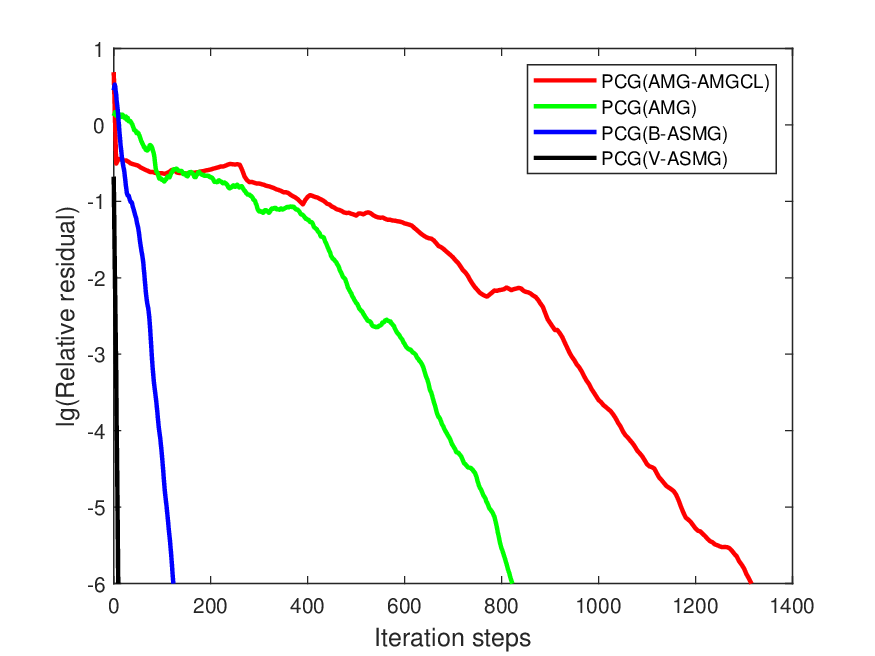}}
	\caption{The convergence history of the relative residual of the square-hole plate tensile problem.}
	\label{figure3-2}
\end{figure}

\noindent {\bf Example 4.4.} Thin-walled water retaining structure stress problem with Young's modulus $E=2.1\mathrm{E}+05$ and Poisson's ratio $\nu=0.3$. 
This example simulates the mechanical response of a thin-walled water retaining structure(such as a steel gate) under hydrostatic pressure.
Since the thickness of such structure is much smaller than other characteristic sizes, the plane stress hypothesis is used to simplify the analysis, which is suitable for evaluating the in-plane stress distribution and deformation characteristic of thin-walled  components.
The height of the calculated area is $H=20$, the width at the bottom is $W=10$, the width at the top is $w=3$, and the angle at the bottom is $\theta=60^{\circ}$, which is also partitioned by an unstructured triangular grid shown in \figurename \ref{figure4-1}(a). 
The bottom boundary is completely fixed, that is, the displacements in the $x$- and $y$- directions are constrained to be $u_1=u_2=0$.
In the meantime, the uniform line force of 10(i.e. $q_1=10$) along the $x$-direction is applied to its left boundary to simulate the effect of lateral load. 
Solving the above problem based on Lagrange linear element, the corresponding sparse stiffness matrix $A$ is obtained, and the relevant information is shown in Table \ref{table4-1}.
The AMG-AMGCL, AMG, B-ASMG and V-ASMG methods are used as the preconditioners of the PCG method to iteratively solve the equations corresponding to the above problem, the iteration steps, numerical error in $L_2$ norm between the numerical solution and the reference solution, setup time, application time and total time are shown in Table \ref{table4-2}. 
The convergence history of the relative residual varying with the
iteration steps is shown in \figurename \ref{figure4-2}. 
The displacement nephogram is shown in \figurename \ref{figure4-1}(b).
According to the above results, it is not difficult to find that compared with the other three algorithms, V-ASMG algorithm not only has the fastest relative residual reduction speed, but also has the highest error accuracy.
\begin{figure}[!ht]
	\centerline{
\subfigure[Geometric model.] {\includegraphics[width=0.40\textwidth]{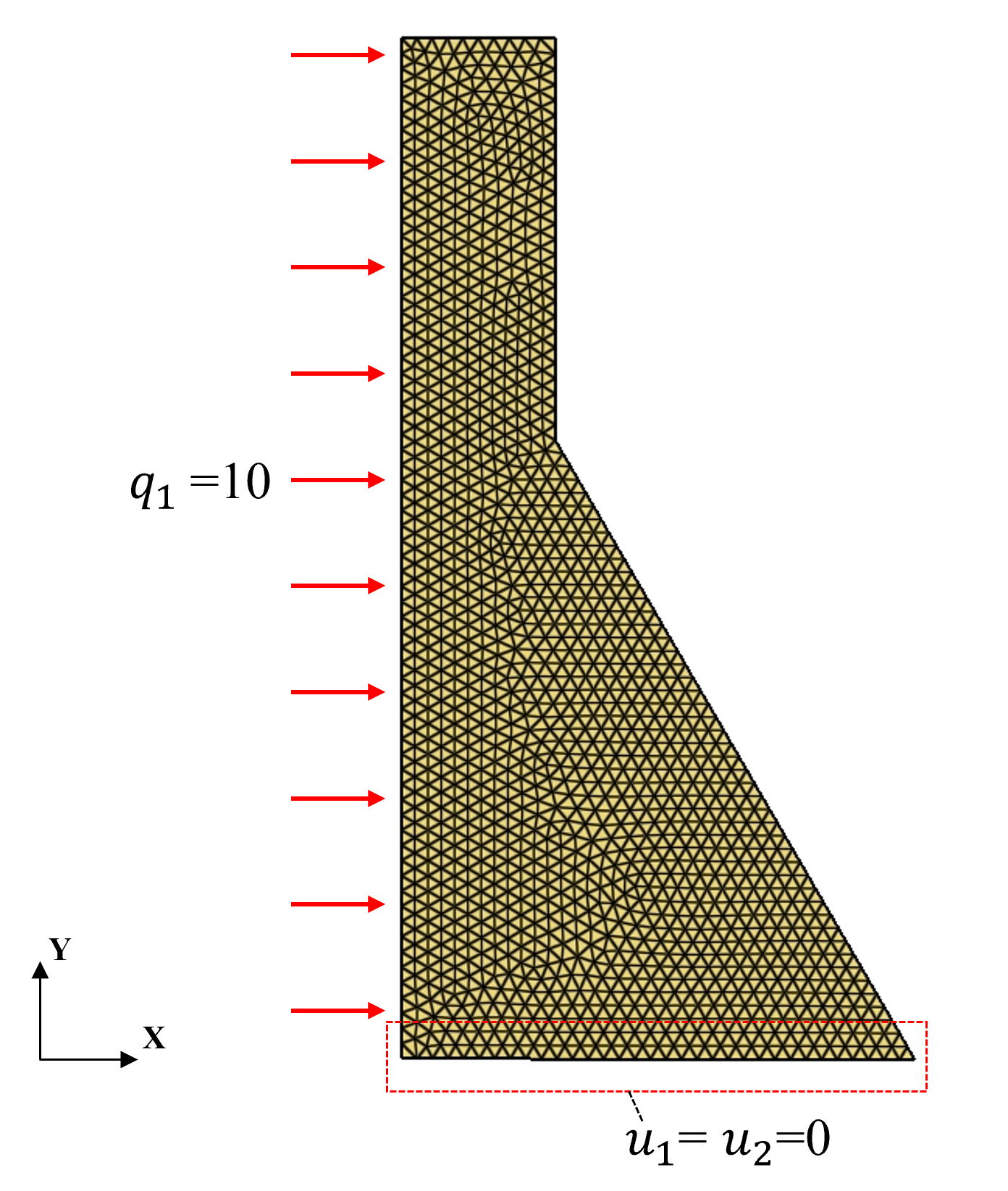}}
\subfigure[Calculation result.] {\includegraphics[width=0.40\textwidth]{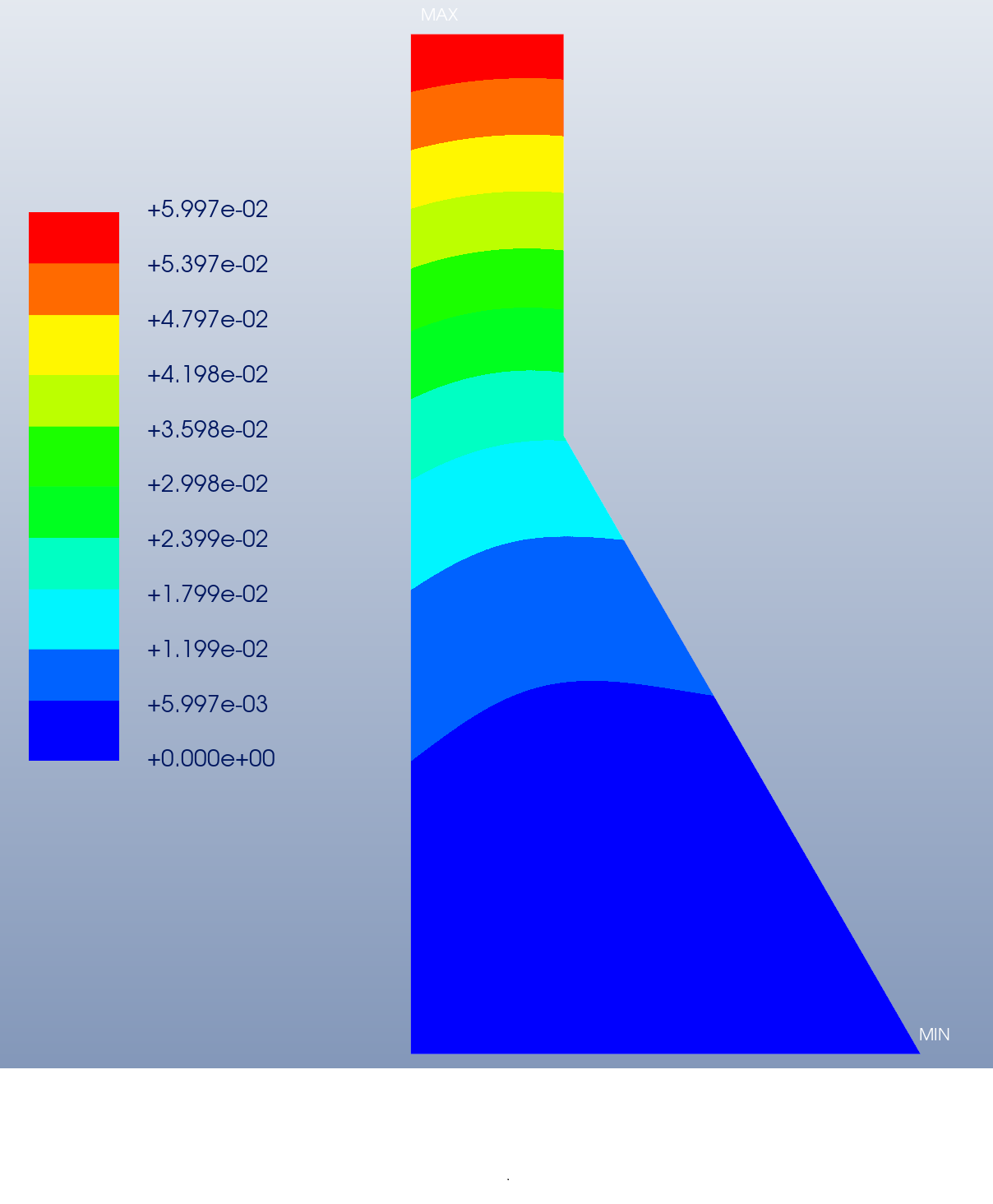}}}
	\caption{Thin-walled water retaining structure stress problem.}
	\label{figure4-1}
\end{figure}
\begin{table}[!ht]
\begin{center}
{\caption{
The relevant information of the sparse stiffness matrix $A$ corresponding to the thin-walled water retaining structure stress problem.}
\begin{tabular}{|c|c|}
\hline
Number of vertices           &249278
\\ \hline
Number of unknowns           &498556
\\ \hline
Number of nonzero elements   &6953400
\\ \hline
Symmetric or not             &Yes
\\ \hline
Positive definite or not     &Yes
\\ \hline
Condition number             &6.07387E+07
\\ \hline
Spectral radius              &1.41200E+06
\\ \hline
\end{tabular}
\label{table4-1}}
\end{center}
\end{table}
\begin{table}[!ht]
\begin{center}
{\caption{
The results of the thin-walled water retaining structure stress problem using the PCG method with different preconditioners.}
\begin{tabular}{|c|c|c|c|c|}
\hline
\multirow{2}{*}{Method} &AMGCL &\multicolumn{3}{c|}{BEFEM} 
\\ \cline{2-5} 
 &PCG(AMG) &PCG(AMG) &PCG(B-ASMG) &PCG(V-ASMG) 
\\ \hline
Iteration steps  &2101 &1358 &165 &12                           
\\ \hline
Numerical error  &3.04490E-07 &2.30523E-07 
                 &2.90871E-07 &2.07623E-08
\\ \hline
Setup time       &   3.295s &  5.871s & 5.356s &13.368s
\\ \hline
Application time &2336.259s &288.395s &27.270s & 2.823s
\\ \hline
Total time       &2339.554s &294.266s &32.626s &16.191s
\\ \hline 
\end{tabular}
\label{table4-2}}
\end{center}
\end{table}
\begin{figure}[!ht]
	\centerline{\includegraphics[width=0.5\textwidth]{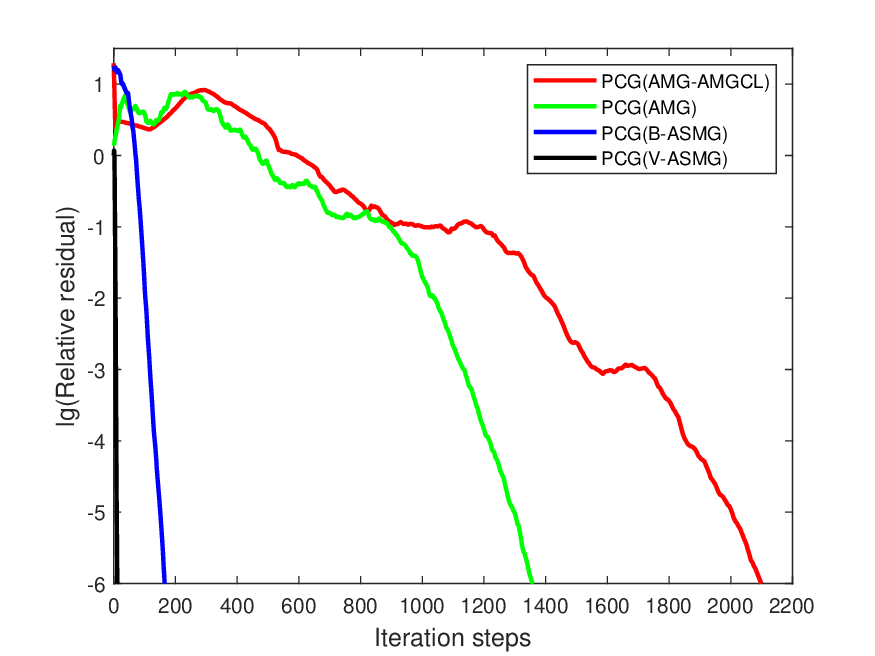}}
	\caption{The convergence history of the relative residual of the thin-walled water retaining structure stress problem.}
	\label{figure4-2}
\end{figure}

\subsection{The three-dimensional problems}
This section exhibits four three-dimensional examples of different working conditions and different sizes, that is, $d=3$.

\noindent {\bf Example 4.5.} Commercial vehicle plate spring bracket stress problem with Young's modulus $E=1.620\mathrm{E}+05$ and Poisson's ratio $\nu=0.293$. 
This example studies the mechanical response of the plate spring bracket of commercial vehicle under vertical load, and focuses on the analysis of the deformation characteristic of the bracket under the condition that the bolt mounting holes are fixed constraints and the pin mounting hole is applied load, which provides a theoretical basis for structural strength check and lightweight design.
The calculation area contains some key features: the body structure of the bracket, the bolt mounting holes connected to the frame, and the pin mounting hole connected to the plate spring.
The bolt mounting holes connected to the frame are completely fixed, and the displacements in $x$-, $y$- and $z$-directions are restricted to be $u_1=u_2=u_3=0$.
While a vertical downward nodal force of 50(i.e. $f_3=-50$) is applied to the pin mounting hole connected to the plate spring.
The computational domain is divided by an unstructured tetrahedral grid, and the problem is solved based on the Lagrange linear element. 
The corresponding geometric model and the obtained equation system information are shown in \figurename \ref{figure5-1}(a) and Table \ref{table5-1} below.
Four MG algorithms are used as the preconditioners of the PCG method to iteratively calculate the equation system obtained in this problem, and the results shown in Table \ref{table5-2} and the convergence history of the relative residual varying with the
iteration steps shown in \figurename \ref{figure5-2} are obtained.
The displacement nephogram is shown in \figurename \ref{figure5-1}(b).
As can be seen from the above figures and tables, for this three-dimensional problem, V-ASMG algorithm also has fewer iteration steps, shorter solving time and higher error accuracy than the other three algorithms.
\begin{figure}[!ht]
	\centerline{
\subfigure[Geometric model.] {\includegraphics[width=0.5\textwidth]{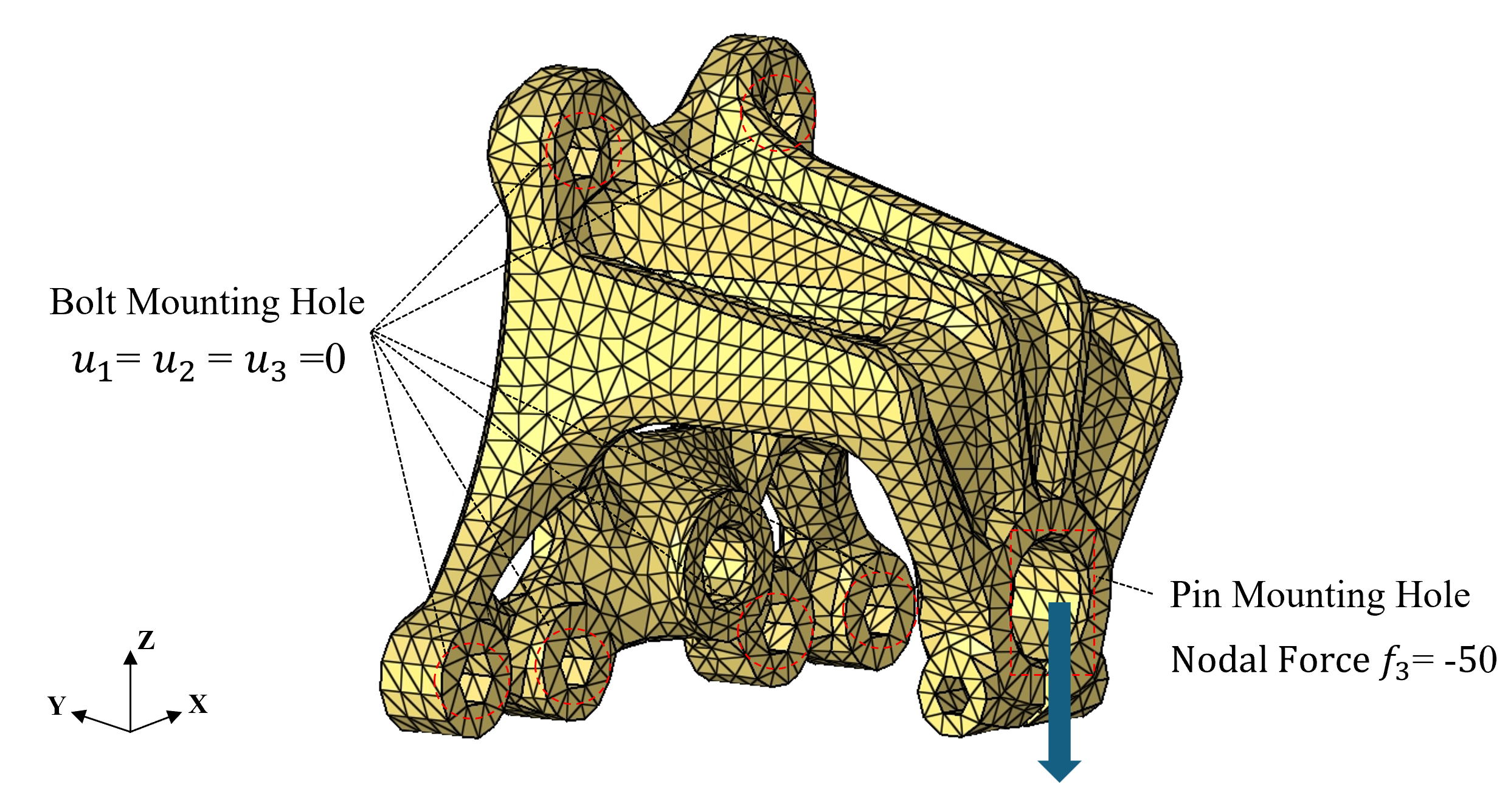}}
\subfigure[Calculation result.] {\includegraphics[width=0.4\textwidth]{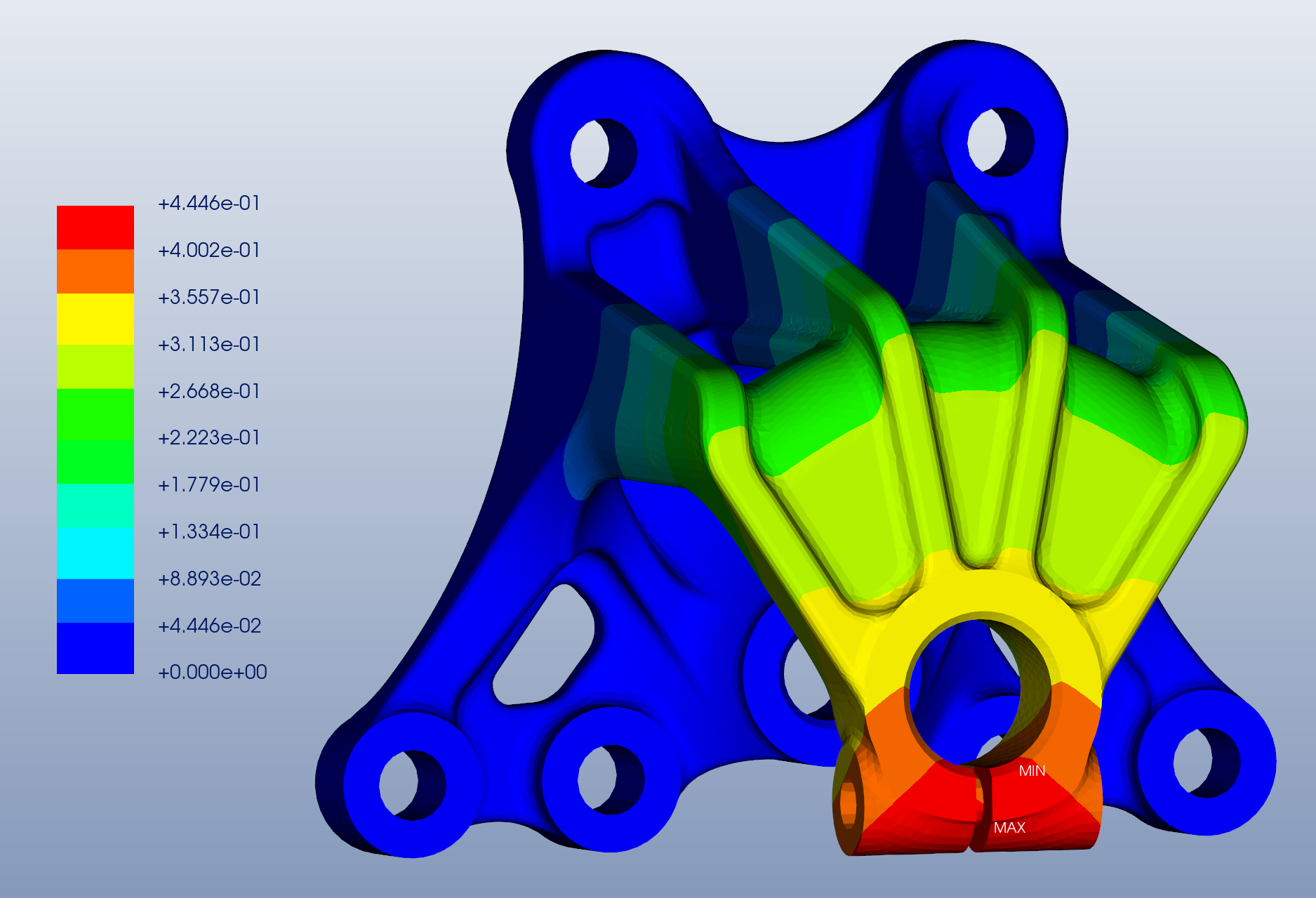}}}
	\caption{Commercial vehicle plate spring bracket stress problem.}
	\label{figure5-1}
\end{figure}
\begin{table}[!ht]
\begin{center}
{\caption{
The relevant information of the sparse stiffness matrix $A$ corresponding to the commercial vehicle plate spring bracket stress problem.}
\begin{tabular}{|c|c|}
\hline
Number of vertices           &85724
\\ \hline
Number of unknowns           &257172
\\ \hline
Number of nonzero elements   &9867594
\\ \hline
Symmetric or not             &Yes
\\ \hline
Positive definite or not     &Yes
\\ \hline
Condition number             &1.28926E+07
\\ \hline
Spectral radius              &5.37445E+06
\\ \hline
\end{tabular}
\label{table5-1}}
\end{center}
\end{table}
\begin{table}[!ht]
\begin{center}
{\caption{
The results of the commercial vehicle plate spring bracket stress problem using the PCG method with different preconditioners.}
\begin{tabular}{|c|c|c|c|c|}
\hline
\multirow{2}{*}{Method} &AMGCL &\multicolumn{3}{c|}{BEFEM} 
\\ \cline{2-5} 
 &PCG(AMG) &PCG(AMG) &PCG(B-ASMG) &PCG(V-ASMG) 
\\ \hline
Iteration steps  &1529 &721 &195 &58                          
\\ \hline
Numerical error  &4.21370E-07 &6.60971E-07 
                 &5.00573E-07 &3.86662E-07
\\ \hline
Setup time       &   6.364s &  7.964s & 5.294s &29.524s
\\ \hline
Application time &1880.198s &224.955s &49.967s &22.544s
\\ \hline
Total time       &1886.562s &232.919s &55.261s &52.068s
\\ \hline 
\end{tabular}
\label{table5-2}}
\end{center}
\end{table}
\begin{figure}[!ht]
	\centerline{\includegraphics[width=0.5\textwidth]{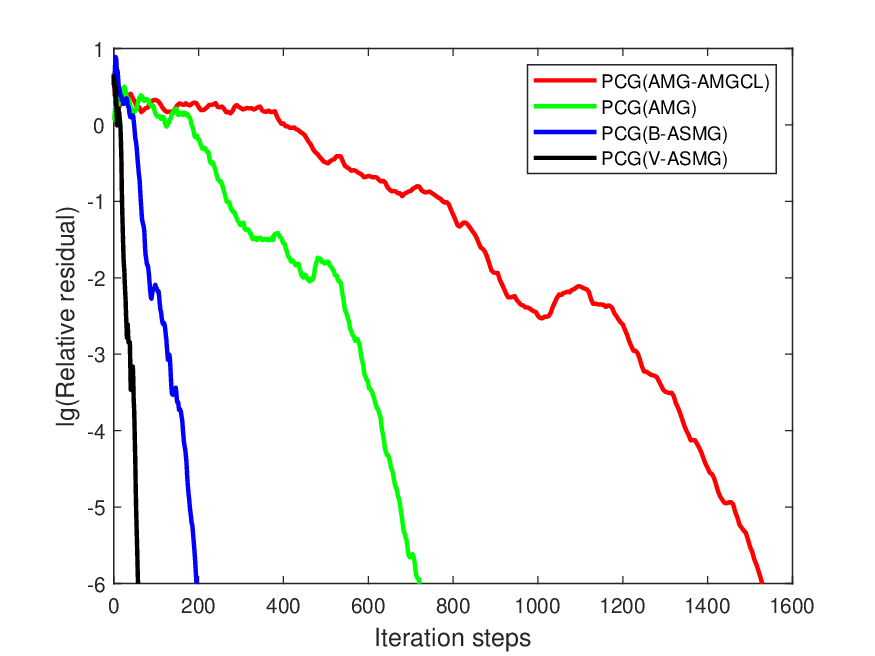}}
	\caption{The convergence history of the relative residual of the commercial vehicle plate spring bracket stress problem.}
	\label{figure5-2}
\end{figure}

\noindent {\bf Example 4.6.} Helicopter hub stress problem with Young's modulus $E=2.1\mathrm{E}+05$ and Poisson's ratio $\nu=0.3$. 
This example is about the structural response of the helicopter hub under flight load, and mainly analyzes the stress distribution and deformation characteristic of the hub under specific constraints and concentrated loads, which provides a basis for structural strength verification and fatigue life evaluation. 
The calculation area contains some key components: the main structure of the hub, the center bore connected to the main shaft, the mounting bores on the lower side, and the weight reduction  bores.
The center bore connected to the main shaft is completely fixed, and the displacements in $x$-, $y$- and $z$-directions are restricted to be $u_1=u_2=u_3=0$. 
The two mounting bores on the lower side are applied a nodal force of 10(i.e. $f_2=10$) along the horizontal direction.
The computational domain is divided based on the tetrahedral elements, and the problem is solved by the Lagrange linear element. 
\figurename \ref{figure6-1}(a) shows its corresponding geometric model and Table \ref{table6-1} shows the obtained equation system information. 
Using the AMG-AMGCL, AMG, B-ASMG and V-ASMG methods as the preconditioners of the PCG method to iteratively solve the equations corresponding to the above problem, the iteration steps, numerical error between the numerical solution and the reference solution under the $L_2$ norm, setup time, application time and total time are shown in Table \ref{table6-2} below.
The convergence history of the relative residual varying with the
iteration steps is shown in \figurename \ref{figure6-2}. 
The displacement nephogram is shown in \figurename \ref{figure6-1}(b).
The above results show that compared with the other three methods, V-ASMG method requires the smallest iteration steps within the shortest time to reach the given tolerance, and obtains the smallest error when the iteration terminates. 
\begin{figure}[!ht]
	\centerline{
\subfigure[Geometric model.] {\includegraphics[width=0.5\textwidth]{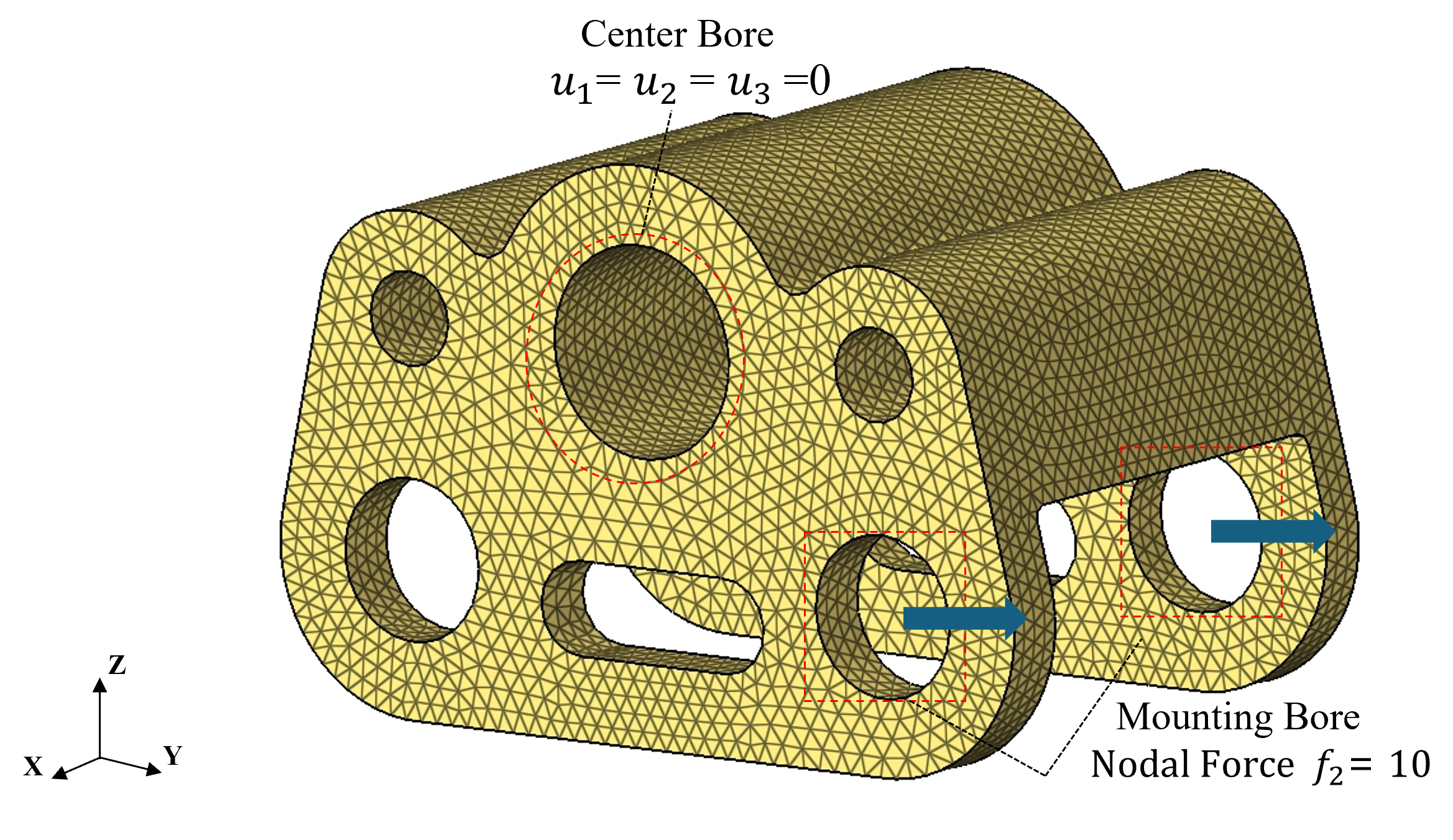}}
\subfigure[Calculation result.] {\includegraphics[width=0.5\textwidth]{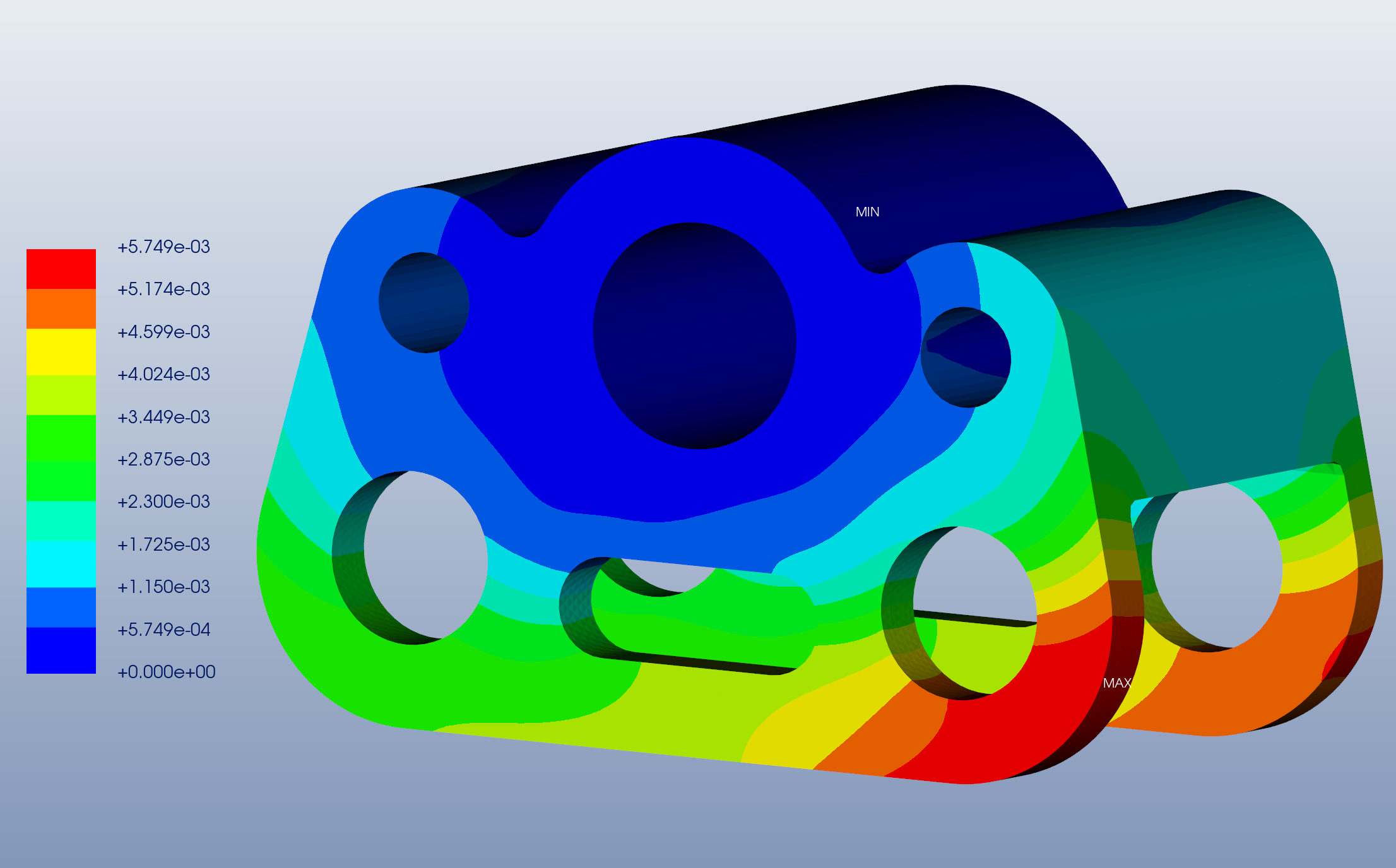}}}
	\caption{Helicopter hub stress problem.}
	\label{figure6-1}
\end{figure}
\begin{table}[!ht]
\begin{center}
{\caption{
The relevant information of the sparse stiffness matrix $A$ corresponding to the helicopter hub stress problem.}
\begin{tabular}{|c|c|}
\hline
Number of vertices           &113954
\\ \hline
Number of unknowns           &341862	
\\ \hline
Number of nonzero elements   &14244584
\\ \hline
Symmetric or not             &Yes
\\ \hline
Positive definite or not     &Yes
\\ \hline
Condition number             &5.19381E+06
\\ \hline
Spectral radius              &9.72260E+06
\\ \hline
\end{tabular}
\label{table6-1}}
\end{center}
\end{table}
\begin{table}[!ht]
\begin{center}
{\caption{
The results of the helicopter hub stress problem using the PCG method with different preconditioners.}
\begin{tabular}{|c|c|c|c|c|}
\hline
\multirow{2}{*}{Method} &AMGCL &\multicolumn{3}{c|}{BEFEM} 
\\ \cline{2-5} 
 &PCG(AMG) &PCG(AMG) &PCG(B-ASMG) &PCG(V-ASMG) 
\\ \hline
Iteration steps  &1160 &597 &179 &26                         
\\ \hline
Numerical error  &1.36560E-07 &3.10987E-07 
                 &8.35128E-07 &1.05761E-07
\\ \hline
Setup time       &  11.523s & 12.243s & 5.833s &46.691s
\\ \hline
Application time &2223.623s &280.855s &58.396s &14.916s
\\ \hline
Total time       &2235.146s &293.098s &64.229s &61.607s
\\ \hline 
\end{tabular}
\label{table6-2}}
\end{center}
\end{table}
\begin{figure}[!ht]
	\centerline{\includegraphics[width=0.5\textwidth]{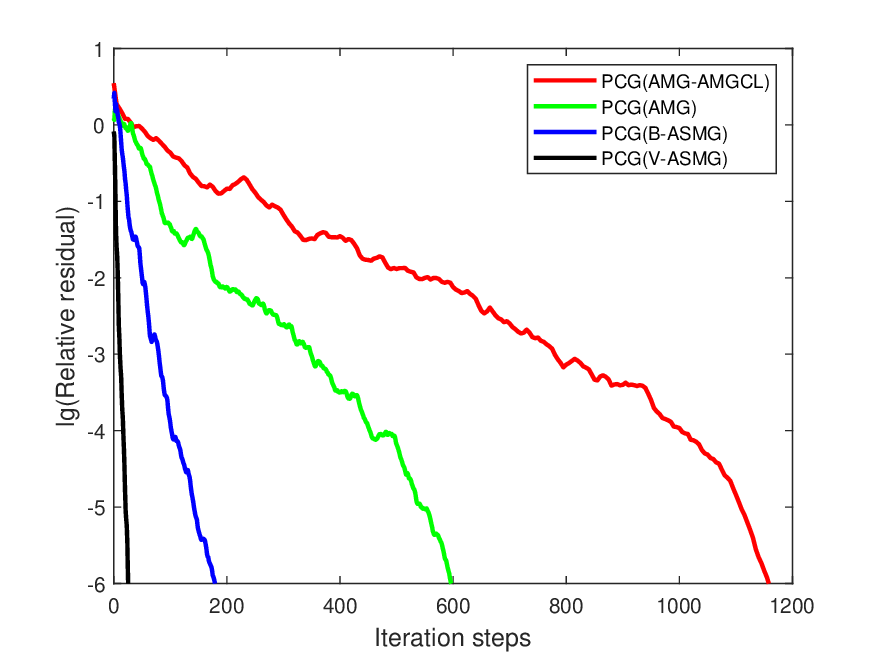}}
	\caption{The convergence history of the relative residual of the helicopter hub stress problem.}
	\label{figure6-2}
\end{figure}

\noindent {\bf Example 4.7.} Aircraft head structural part stress problem with Young's modulus $E=2.1\mathrm{E}+05$ and Poisson's ratio $\nu=0.3$. 
The numerical simulation of mechanical response of typical structural part of aircraft head under uniform load is carried out in this example, which provides a basis for structural optimization. 
Completely fix the right end face, that is, restrict the displacements in $x$-, $y$- and $z$-directions to be $u_1=u_2=u_3=0$.
At the same time, apply a uniform line force of 10000(i.e. $q_3=-10000$) along the negative $z$-axis to the middle circular hole convex mounting surface.
The computational domain is divided based on tetrahedral elements, and Lagrange linear element is used to solve the problem.
\figurename \ref{figure7-1}(a) shows its corresponding geometric model and Table \ref{table7-1} shows the obtained equation system information. 
It can be seen from the information in the last two rows of the Table \ref{table7-1} that the condition number and spectral radius of this example are very large, which is a great challenge for iterative solution. 
The equations corresponding to the above problem are solved by using the AMG-AMGCL, AMG, B-ASMG and V-ASMG methods as the preconditioners of the PCG method, the iteration steps, numerical error between the numerical solution and the reference solution under the $L_2$ norm, setup time, application time and total time are shown in Table \ref{table7-2} below.
The convergence history of the relative residual varying with the iteration steps is shown in \figurename \ref{figure7-2}. 
The displacement nephogram is shown in \figurename \ref{figure7-1}(b).
The above results indicate that compared to the other three methods, V-ASMG method needs the smallest iteration steps within the shortest time to reach the given tolerance, and achieves the smallest error when the iteration stops. 
\begin{figure}[!ht]
	\centerline{
\subfigure[Geometric model.] {\includegraphics[width=0.5\textwidth]{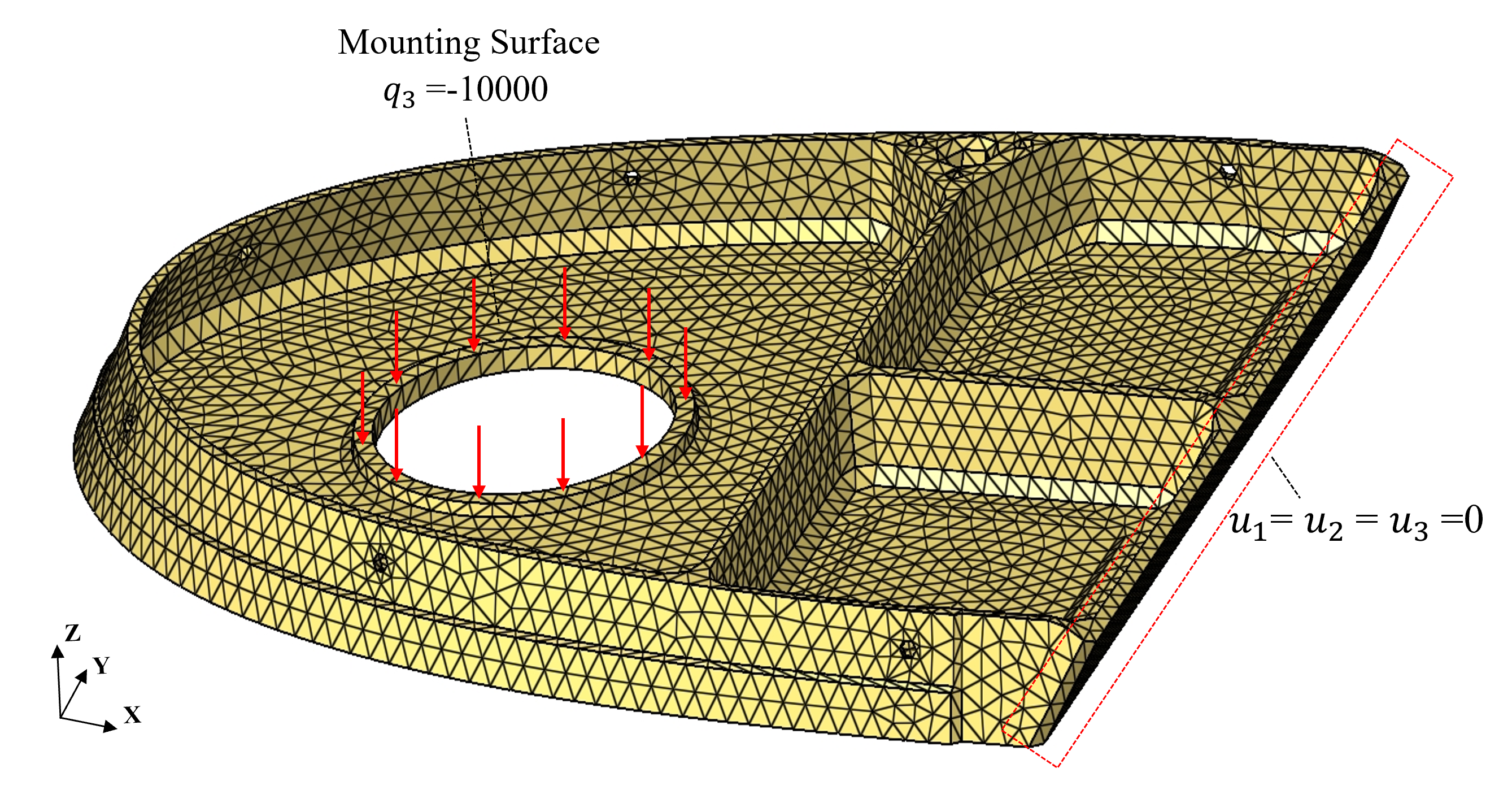}}
\subfigure[Calculation result.] {\includegraphics[width=0.5\textwidth]{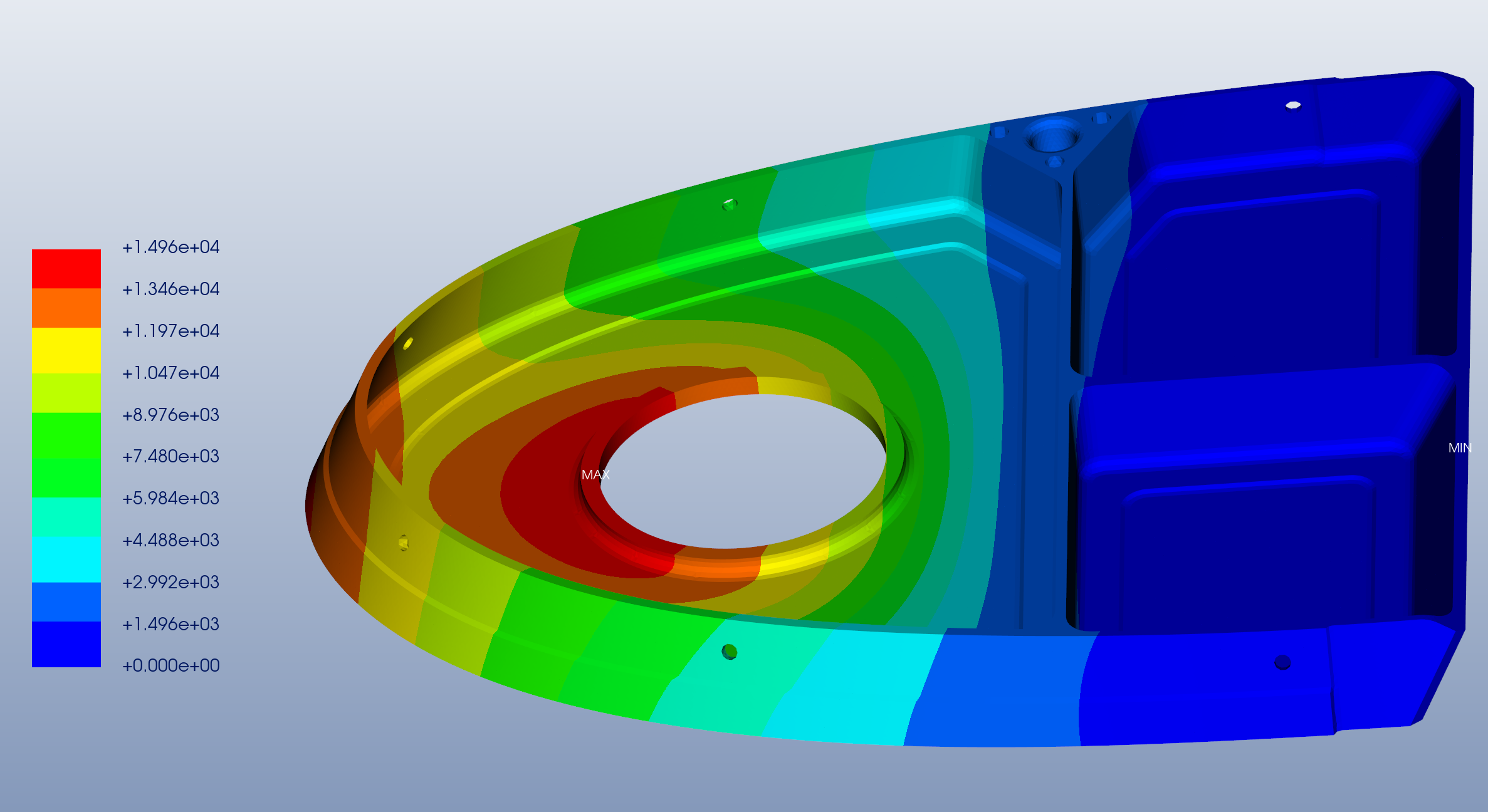}}}
	\caption{Aircraft head structural part stress problem.}
	\label{figure7-1}
\end{figure}
\begin{table}[!ht]
\begin{center}
{\caption{
The relevant information of the sparse stiffness matrix $A$ corresponding to the aircraft head structural part stress problem.}
\begin{tabular}{|c|c|}
\hline
Number of vertices           &139653		
\\ \hline
Number of unknowns           &418959	
\\ \hline
Number of nonzero elements   &14625043
\\ \hline
Symmetric or not             &Yes
\\ \hline
Positive definite or not     &Yes
\\ \hline
Condition number             &2.90196E+10
\\ \hline
Spectral radius              &4.33196E+08
\\ \hline
\end{tabular}
\label{table7-1}}
\end{center}
\end{table}
\begin{table}[!ht]
\begin{center}
{\caption{
The results of the aircraft head structural part stress problem using the PCG method with different preconditioners.}
\begin{tabular}{|c|c|c|c|c|}
\hline
\multirow{2}{*}{Method} &AMGCL &\multicolumn{3}{c|}{BEFEM} 
\\ \cline{2-5} 
 &PCG(AMG) &PCG(AMG) &PCG(B-ASMG) &PCG(V-ASMG) 
\\ \hline
Iteration steps  &8046 &5041 &948 &133
\\ \hline
Numerical error  &2.92160E-05 &3.45245E-05 
                 &3.11411E-05 &1.14639E-05
\\ \hline
Setup time       &    9.043s &  11.188s &  5.832s &37.990s
\\ \hline
Application time &13467.503s &1635.370s &197.140s &40.168s
\\ \hline
Total time       &13476.546s &1646.558s &202.972s &78.158s
\\ \hline 
\end{tabular}
\label{table7-2}}
\end{center}
\end{table}
\begin{figure}[!ht]
	\centerline{\includegraphics[width=0.5\textwidth]{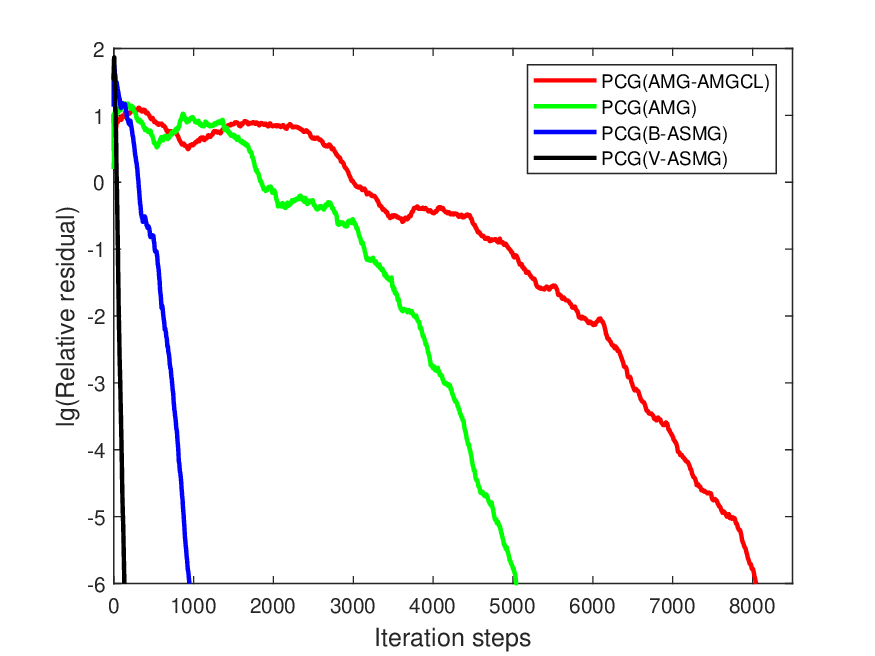}}
	\caption{The convergence history of the relative residual of the aircraft head structural part stress problem.}
	\label{figure7-2}
\end{figure}

\noindent {\bf Example 4.8.} Axle torsion analysis with Young's modulus $E=2.1\mathrm{E}+05$ and Poisson's ratio $\nu=0.3$. 
In this case, the mechanical behavior of the helicopter axle under torque condition is simulated, and the torsional deformation and stress distribution of the axle with keyway structure under tangential force are analyzed, which furnishes a foundation for the reliability evaluation of the transmission system.
The computational area contains some key features: axle, middle keyway, and two end keyways. 
The inner wall of the middle keyway is completely fixed, and the displacements in $x$-, $y$- and $z$-directions are restricted to be $u_1=u_2=u_3=0$. 
On the inner wall of the right end keyway, apply a tangential nodal force of 10(i.e. $f_1=10$).
Divide the computational domain by an unstructured tetrahedral grid, and then use the Lagrange linear element to solve the problem.
The corresponding geometric model of such problem is shown in \figurename \ref{figure8-1}(a) and the information of the obtained equation system is listed in Table \ref{table8-1}.
The four methods are used as the preconditioners of the PCG method to iteratively solve the equations corresponding to the above problem, the iteration steps, numerical error in $L_2$ norm between the numerical solution and the reference solution, setup time, application time and total time are shown in Table \ref{table8-2}. 
The convergence history of the relative residual varying with the iteration steps is shown in \figurename \ref{figure8-2}. 
The displacement nephogram is shown in \figurename \ref{figure8-1}(b).
According to the above results, it is not difficult to find that compared with the other three algorithms, V-ASMG algorithm not only has the fastest relative residual reduction speed, but also has the highest error accuracy.
Moreover, from \figurename \ref{figure8-2}, it can be seen that the relative residual decline curve of the V-ASMG method is very smooth, indicating that it is more stable than the other three methods. 
This advantage of the V-ASMG method is also demonstrated in other examples in this paper, but it is especially obvious in this case.
\begin{figure}[!ht]
	\centerline{
\subfigure[Geometric model.] {\includegraphics[width=0.5\textwidth]{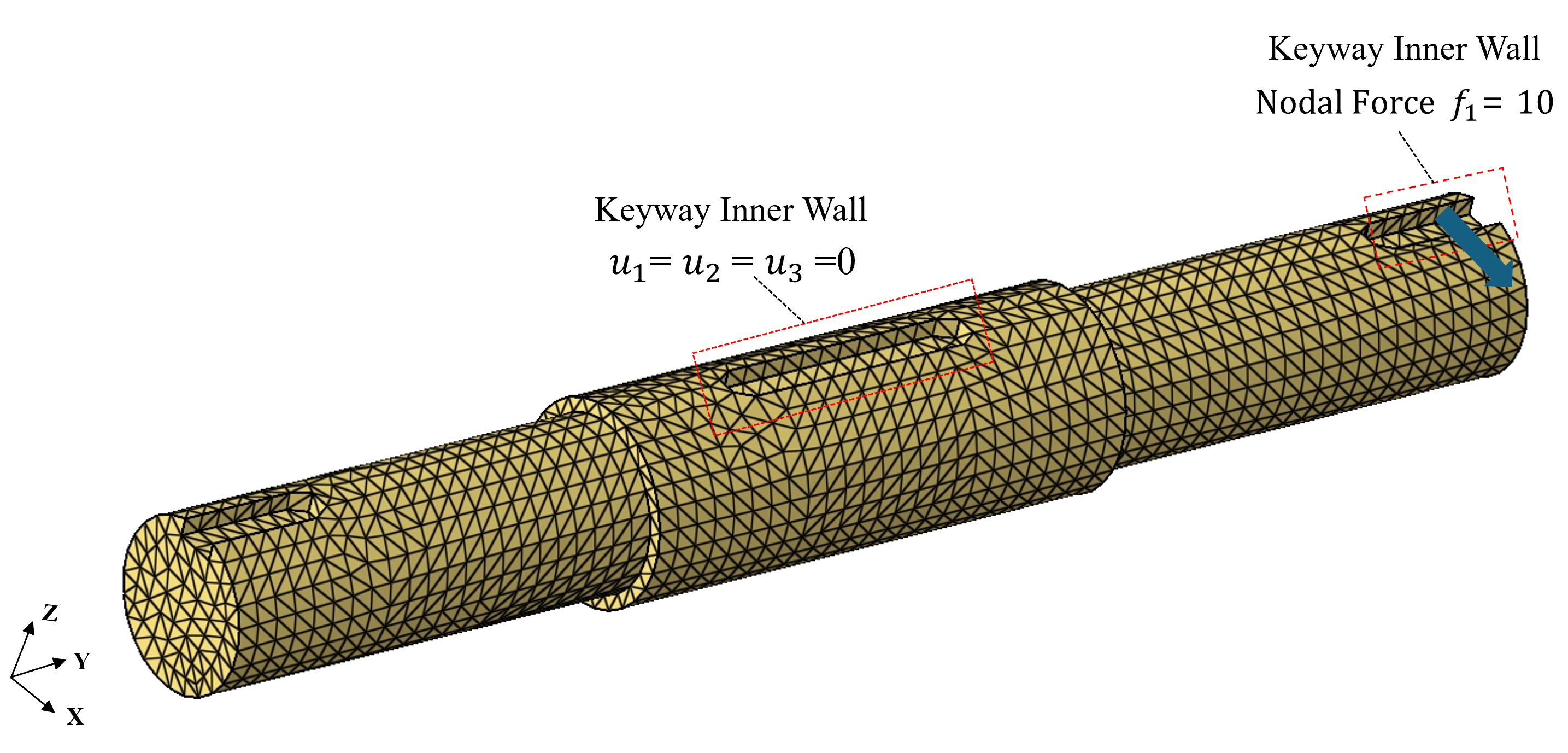}}
\subfigure[Calculation result.] {\includegraphics[width=0.5\textwidth]{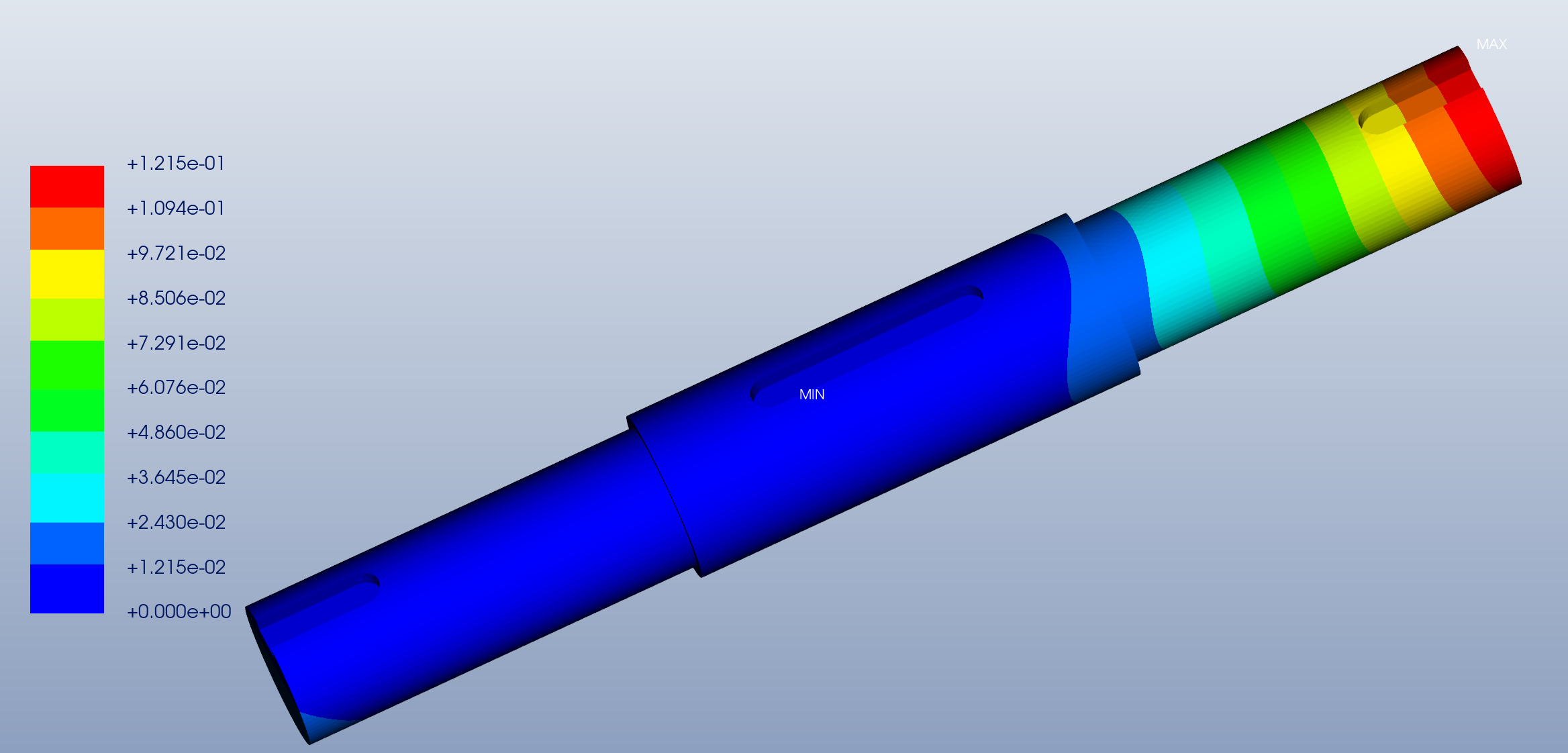}}}
	\caption{Axle torsion analysis.}
	\label{figure8-1}
\end{figure}
\begin{table}[!ht]
\begin{center}
{\caption{
The relevant information of the sparse stiffness matrix $A$ corresponding to the axle torsion analysis.}
\begin{tabular}{|c|c|}
\hline
Number of vertices           &177948		
\\ \hline
Number of unknowns           &533844
\\ \hline
Number of nonzero elements   &23941668
\\ \hline
Symmetric or not             &Yes
\\ \hline
Positive definite or not     &Yes
\\ \hline
Condition number             &2.69937E+07
\\ \hline
Spectral radius              &4.79185E+06
\\ \hline
\end{tabular}
\label{table8-1}}
\end{center}
\end{table}
\begin{table}[!ht]
\begin{center}
{\caption{
The results of the axle torsion analysis using the PCG method with different preconditioners.}
\begin{tabular}{|c|c|c|c|c|}
\hline
\multirow{2}{*}{Method} &AMGCL &\multicolumn{3}{c|}{BEFEM} 
\\ \cline{2-5} 
 &PCG(AMG) &PCG(AMG) &PCG(B-ASMG) &PCG(V-ASMG) 
\\ \hline
Iteration steps  &769 &705 &290 &29                           
\\ \hline
Numerical error  &6.53210E-09 &8.96215E-09 
                 &9.71928E-09 &5.75359E-09
\\ \hline
Setup time       &  17.458s & 22.881s &  9.739s &73.608s
\\ \hline
Application time &2495.338s &398.819s & 98.509s &15.155s
\\ \hline
Total time       &2512.796s &421.700s &108.248s &88.763s
\\ \hline 
\end{tabular}
\label{table8-2}}
\end{center}
\end{table}
\begin{figure}[!ht]
	\centerline{\includegraphics[width=0.5\textwidth]{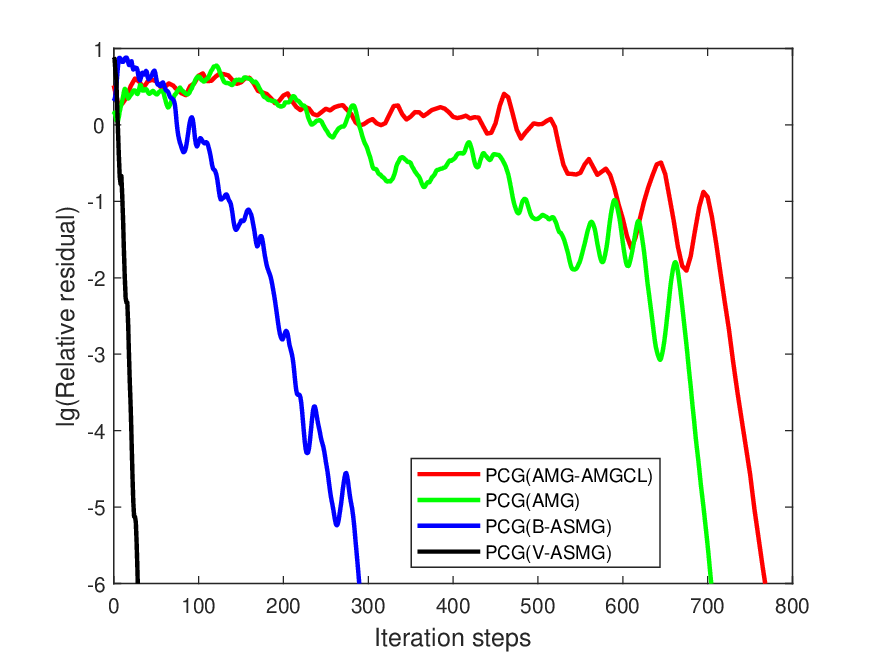}}
	\caption{The convergence history of the relative residual of the axle torsion analysis.}
	\label{figure8-2}
\end{figure}

\section{Concluding remarks}
In this paper, we propose a V-ASMG method as a preconditioner of the PCG method for solving the large sparse linear equations derived from the linear elasticity equations. 
Of course, this method can be naturally generalized to other equations, because all we need is the information of the grid vertices.
Such method works in the similar way as that in \cite{gwx2016} for constructing the auxiliary region-tree, which is just called a box-tree there.
The biggest difference between them is that the grid vertices instead of the barycenters are chosen as the representative points.
This makes it unnecessary to deal with the hanging points after building the auxiliary region-tree, and it is possible to construct the prolongation/restriction operator directly by using the bilinear interpolation function.
Then, the scale of the stiffness matrix corresponding to each grid layer drops more slowly, so that the error components of different frequencies can be eliminated better.
The numerical results in the previous section further show that the iteration steps of the PCG method with such V-ASMG method as a preconditioner are far less than that of the classical AMG method and B-ASMG method.

\section*{Acknowledgement}

The authors would like to thank the editor and anonymous referees for their valuable comments and suggestions, which have helped us improve this paper a lot.


\end{document}